\numberwithin{equation}{section}
\theoremstyle{plain}
\newtheorem{Th}{Theorem}[section]
\newtheorem{Lemma}[Th]{Lemma}
\newtheorem{Prop}[Th]{Proposition}
 \theoremstyle{definition}
\newtheorem{Rem}[Th]{Remark}
\newtheorem{?}[Th]{Problem}
\newcommand{\Pro}{\mathbb{P}}
\newcommand{\Prob}[1]{\mathbb{P}\{ #1 \}}
\newcommand{\Probx}[2]{\mathbb{P}^{#1}\{ #2 \}}
\newcommand{\ProbBig}[1]{\mathbb{P}\Big\{ #1 \Big\}}
\newcommand{\E}[1]{\mathbb{E} #1 }
\newcommand{\EX}[2]{\mathbb{E}^{#1} #2 }
\newcommand{\EXBig}[2]{\mathbb{E}^{#1}\Big( #2 \Big)}
\newcommand{\EBig}[1]{\mathbb{E} #1 }
\newcommand{\Ind}[1]{\mathds{1}_{\{ #1\}}}
\newcommand{\eps}{\epsilon}
\newcommand{\lam}{\lambda}
\newcommand{\ti}{{\tau_i}}
\newcommand{\tj}{{\tau_j}}
\newcommand{\li}{{\lambda_i}}
\newcommand{\lj}{{\lambda_j}}
\newcommand{\lk}{{\lambda_k}}
\newcommand{\gi}{{\gamma_i}}
\newcommand{\gj}{{\gamma_j}}
\newcommand{\R}{\mathbb{R}}
\newcommand{\fl}{\frac{\lambda_j}{\lambda_i}}
\newcommand{\tfl}{\tfrac{\lambda_j}{\lambda_i}}
\newcommand{\forallj}{\forall j \neq i}
\newcommand{\smallo}[1]{{o\big(#1\big)}}
\newcommand{\FF}{\mathbf{F}}
\newcommand{\PP}{\mathbb{P}}
\newcommand{\EE}{\mathbb{E}}
\newcommand{\pp}{\mathtt{p}}
\newcommand{\NN}{\mathcal{N}}
\newcommand{\II}{\mathcal{I}}
\newcommand{\Zc}{\mathcal{Z}}
\newcommand{\tZ}{\widetilde{Z}}
\newcommand{\pDD}{\partial_\DD}
\newcommand{\Czero}{\mathcal{C}}
\newcommand{\DD}{\mathbf{D}}
\newcommand{\UU}{\mathbf{U}}
\newcommand{\HH}{\mathcal{H}}
\newcommand{\Int}{\mathop{\mathrm{int}}}
\newcommand{\BB}{\mathcal{B}}
\newcommand{\dist}{\mathrm{dist}}
\newcommand{\Oc}{\mathcal{O}}
\newcommand{\sm}{\setminus}
\newcommand{\indset}{i}
\begin{document}

\title{Atypical Exit Events Near a Repelling Equilibrium}

\author{Yuri Bakhtin}
\author{Hong-Bin Chen}
\address{Courant Institute of Mathematical Sciences\\ New York University \\ 251~Mercer~St, New York, NY 10012 }
\email{bakhtin@cims.nyu.edu, hbchen@cims.nyu.edu }

\keywords{Vanishing noise limit, unstable equilibrium, exit problem, polynomial decay, equidistribution, Malliavin calculus} 

\subjclass[2010]{60H07, 60H10, 60J60}

\begin{abstract}  We consider exit problems for small white noise perturbations of a dynamical system generated by a vector field, and a domain containing 
a critical point  with all positive eigenvalues of linearization. We prove that, in the vanishing noise limit, the probability of exit through a generic set on the boundary is asymptotically polynomial in the noise strength, with exponent depending on the mutual position of the set and the flag of the invariant manifolds associated with the top eigenvalues. Furthermore, we compute the limiting exit distributions conditioned on atypical exit events of polynomially small probability and show that the limits  are Radon--Nikodym equivalent to volume measures on certain manifolds that we construct. This situation is in sharp contrast with the large deviation picture where the limiting conditional distributions are point masses. 
\end{abstract}
	
\maketitle
\tableofcontents

\section{Introduction}

This paper is a part of our program on long-term behavior  of dynamical systems with multiple unstable equilibria organized into heteroclinic networks, under small noisy perturbations. 
The existing work in this direction (see
\cite{Stone-Holmes:MR1050910}, \cite{Stone-Armbruster:doi:10.1063/1.166423}, \cite{Armbruster-Stone-Kirk:MR1964965} for early analysis with elements of heuristics and 
 \cite{Bakhtin2010:MR2731621}, \cite{Bakhtin2011}, \cite{Almada-Bakhtn:MR2802310}, \cite{long_exit_time}, \cite{long_exit_time-1d-part-2}, \cite{exit_time} for rigorous analysis) is a departure from the classical Freidlin--Wentzell (FW) theory of metastability. In FW, rare transitions can be described via large deviations theory and happen at rates exponential in $-\eps^{-2}$ where $\eps$ is the strength of the perturbation:
\begin{align}\label{eq:SDE_X}
    dX^\eps_t = b(X^\eps_t)dt + \eps \sigma(X^\eps_t)dW_t.
\end{align}

In \cite{Kifer1981}, it was shown that the exit from a neighborhood of an unstable critical point of $b$ happens in time of the order of $\log \eps^{-1}$, in the most unstable direction, along the invariant manifold associated to the top eigenvalue of the linearization of the vector field~$b$. In the case where the top eigenvalue is not simple, the limit of the exit  location distribution was studied in \cite{Eizenberg:MR749377}. 
  
In the case of a simple top eigenvalue, the results of~\cite{Kifer1981} were strengthened in \cite{Bakhtin2010:MR2731621}, \cite{Bakhtin2011}, \cite{Almada-Bakhtn:MR2802310}, where  scaling limits for the distributions of  exit locations were obtained  and used to compute asymptotic probabilities of various pathways through the network. In particular, it turned out that there are interesting memory effects and in general the typical limiting behavior at logarithmic timescales is not simply a random walk on the graph of heteroclinic connections.

To study the dynamics over longer times, one has to study the rare events realizing unusual transitions that are improbable over logarithmic time scales, see the discussion of heteroclinic networks in~\cite{long_exit_time-1d-part-2}. It was also understood in \cite{long_exit_time} and~\cite{long_exit_time-1d-part-2} that the leading 
contribution to these rare events is due to abnormally long stays in the neighborhood of the critical point. Asymptotic results
on the decay of probabilities of these events were obtained for repelling equilibria  in these
papers for dimension~$1$ and in~\cite{exit_time}
for higher dimensions. The general results of \cite{exit_time} can be briefly summarized as follows. If all the eigenvalues of the linearization at the critical point are
positive and simple and the leading one 
is $\lambda>0$, then, for
all  $\alpha>1/\lambda$ and initial conditions at distance of the order of $\eps$ to the critical point, 
it was shown that  \[\Prob{\tau>\alpha \log \eps^{-1}}=c\eps^{\beta}(1+o(1)), \quad \eps\to0,\]
where $c$ and $\beta$ were explicitly computed. Note that this is a more precise estimate than $\log \Prob{\tau>\alpha \log \eps^{-1}}=(\beta \log \eps) (1+o(1))$ conjectured in~\cite{mikami1995}. 

 \medskip
 
In the present paper, we extend the study of atypical exit times from~\cite{exit_time} to the study of atypical exit locations in the same setting. We assume that the dynamics near the critical point (which we place at the origin in $\R^d$) admits a smooth conjugacy to the linear dynamics with simple characteristic exponents $\lambda_1>\ldots>\lambda_d>0$ and consider a neighborhood $\DD$ of the origin, with smooth boundary $\partial\DD$. 
For any subset $A$ of $\partial\DD$ possessing a certain regularity property (most relatively open subsets of $\partial\DD$ fall into this category), we prove that 
\begin{equation}
\Prob{X_\tau\in A}=\eps^{\rho(A)}\mu(A)(1+o(1)),\quad \eps\to0,
\label{eq:power-asymptotics}
\end{equation}
where $\rho(A)$ and $\mu(A)$ are constants. The values that the exponent $\rho(A)$ can take belong to a discrete set of values $(\rho_i)_{i=1}^d$: 
\begin{equation}
 \label{eq:rho_i}
\rho_i= \sum_{j<i} \left(\fl -1\right), \quad i=1,2,\ldots,d.
\end{equation}
Here and throughout this paper, the sum over an empty set is understood to be $0$. 

The relevant index $i=\indset(A)$ to be used in \eqref{eq:rho_i}, i.e, such that $\rho(A)=\rho_{\indset(A)}$ in \eqref{eq:power-asymptotics} is defined in the following way. For each $i=1,\ldots,d$, there is a uniquely defined $i$-dimensional manifold $M^i$ invariant under the flow generated by the drift vector field $b$, with tangent space at the origin spanned by eigenvectors associated with eigenvalues $\lambda_1,\ldots,\lambda_i$. These manifolds form a flag, i.e., $M^1\subset M^2\subset\ldots \subset M^d$,
their traces on~$\partial\DD$ defined by $N^i=M^i\cap \partial\DD$ also satisfy $N^1\subset N^2\subset\ldots \subset N^d$ and, additionally, $N^d\cap \partial\DD=\partial\DD$ and thus $\indset(A)=\min\{i\in\{1,\ldots,d\}:\ N^i \cap A\ne\emptyset\}$ is always well-defined, see Figure~\ref{fig:3d_example}. 

Our results mean that exits along manifolds of various dimensions have probabilities of different polynomial decay rates. Since $0=\rho_1<\rho_2<\ldots\rho_d$, these probabilities are of the order of $\eps^{\rho_1}(=\eps^0=1)\gg \eps^{\rho_2}\gg
\ldots\gg\eps^{\rho_d}$.
The differences in the order of magnitude for these probabilities are due to a drastic distortion
caused by exponential expansion with different rates in different eigendirections. One can say that the exit direction of the system is largely determined by its behavior in infinitesimal time which is then amplified by exponential growth with different rates in different directions.
 
In agreement with the results of~\cite{Kifer1981},  exiting in the neighborhood of a two-point set $N^1$ ($M^1$ is a $1$-dimensional manifold, i.e., a curve, associated to the most unstable direction) is a typical event which has asymptotic probability $1$. Exiting away from it happens with probability of the order of $\eps^{\rho_2}$ and, conditioned on this polynomially rare event, the exit distribution concentrates on $N_2$.  In general, exiting away from $N_k$ is a rare event of  probability
of the order of $\eps^{\rho_{k+1}}$ and, conditioned on this rare event, the exit distribution
concentrates on $N_{k+1}$. Moreover,  these conditional distributions 
converge weakly, as $\eps\to 0$, to a limiting measure that is Radon--Nikodym equivalent to the $k$-dimensional volume on $N_{k+1}$, with a density that can be described explicitly. The basic case where the domain~$\DD$ is a cube and the vector field $b$ is linear is at the heart of the analysis.
It turns out that the limiting distributions of exit locations conditioned on exits through various faces of the cube show equidistribution properties that cannot be obtained through large deviation estimates and are surprising if one is used to the FW mindset.

We discuss the simple situation described above and build our intuition in Section~\ref{sec:heuristic}. 
In Section~\ref{section:setting} we give the general setting and our main results in detail. The proofs are given in Sections \ref{sec:derivation-main-result}---\ref{section:extension}.

The techniques that we are using are primarily probabilistic. Most are based on the classical stochastic calculus tools and the key estimate is based on Malliavin calculus. 
In principle, exit problems can be addressed using PDE tools. For exits near unstable critical points, some elements of PDE-based analysis   can be found in~\cite{Kifer1981}, \cite{Day95}, and \cite{CGLM:MR3083930}. So it
would be interesting to find a PDE approach to the problem solved in the present paper but we follow the path of probabilistic analysis using the basic approach similar to \cite{Eizenberg:MR749377}, \cite{Day95}, \cite{Bakhtin2010:MR2731621}, \cite{Bakhtin2011}, \cite{Almada-Bakhtn:MR2802310}, \cite{OnGumbel}, \cite{long_exit_time}, \cite{long_exit_time-1d-part-2}, \cite{exit_time}.

Concluding the introduction, let us briefly discuss two directions that will be natural continuations of the present work.

Although our new results and those on exit times from~\cite{exit_time} are based on 
the same density estimates, we do not develop that connection further in this paper. In particular, the detailed asymptotic analysis of the joint distribution of exit locations and exit times seems possible but harder, and we postpone it to another publication.

A more important question is the asymptotic behavior of exit distributions near hyperbolic critical points (saddles) of the driving vector field. 
Atypical events described in terms of the exit location are responsible for
atypical transitions in heteroclinic networks. Similarly to the situation
in this paper, their probability is expected to decay polynomially in $\eps$ leading to a hierarchy of transitions observable at various polynomial time scales, see the heuristic analysis in~\cite{long_exit_time-1d-part-2}. The approach of the present paper based on Malliavin calculus
density estimates from~\cite{exit_time},
will be an important ingredient in making this analysis rigorous in another forthcoming publication.

{\bf Acknowledgments.} The conditional asymptotic equidistribution first emerged
in discussions with Zsolt Pajor-Gyulai in connection to our project on noisy heteroclinic networks.  YB thanks NSF for the partial support via award DMS-1811444. 

\begin{figure}
    \centering
    \begin{tikzpicture}[use Hobby shortcut,closed=true]
	\draw (0,0) circle (3cm);
	
	\filldraw[dashed,fill=black!5] (0,0) ellipse (3cm and 1cm);
	\filldraw [black] (0,0) circle (1pt);
	\filldraw [black] (-1,0.94) circle (1pt);
	\filldraw [black] (1,-0.94) circle (1pt);
	\draw[dashed] (-1,0.94) -- (1,-0.94);
    
	\node [right] at (0,0.2) {$0$};
	\node [right] at (3.5,0.7) {$i(A_1)=1$};
	\node [right] at (3.5,0) {$i(A_2)=2$};
	\node [right] at (3.5,-0.7) {$i(A_3)=3$};
	\node [right] at (0.7,-1.3) {$A_1$};
	\node [right] at (-1.9,-1.1) {$A_2$};
	\node [right] at (-0.25,2.15) {$A_3$};
	\node [right] at (2.1,2.4) {$\DD\subset \R^3$};

		\draw (1.501, -0.94).. (1.5,-0.9) .. (1.45,-0.6).. (1.1,-0.3) ..(0.5, -0.5).. (0.7,-0.9) .. (0.5, - 1.3).. (1.5,-1.3) ..(1.5, -1.1).. (1.5, -1)..(1.501, -0.94) ;

	\draw (-0.45,-1).. (-0.45,-0.8)..(-1,-0.6).. (-1.5,-0.7)..(-2,-0.5)..(-2.2,-0.45)..(-2.3,-0.6)..(-2.25,-1)..(-2,-1.3)..(-1.5,-1.4)..(-0.9,-1.4)..(-0.55,-1.2)..(-0.45,-1);
	\draw (-1.2,1.8)..(-1,1.5)..(0,1.8)..(0.7,1.6)..(1,1.8)..(1,2.2)..(0,2.55)..(-1,2.3)..(-1.2,1.8);
\end{tikzpicture}
    \caption{The dashed line segment and the shaded surface are the portions of $M^1$ and $M^2$ inside $\DD$, respectively. The sets $A_i$, $i=1,2,3$, lie on $\partial \DD$. Probabilities of exit through these sets have different decay rates: there are constants $c_1,c_2,c_3$ such that $\Prob{X_\tau\in  A_1}\to c_1$,  $\Prob{X_\tau\in A_2}\sim c_2\eps^{\rho_2}$, $\Prob{X_\tau\in A_3}\sim c_3\eps^{\rho_3}$. Moreover, conditioned on  $\{X_\tau\in A_2\}$, the
    distribution of $X_\tau$ has a limit concentrated on  $A_2\cap N_2 = A_2\cap M_2$ and equivalent to the length measure on this curve; conditioned on  $\{X_\tau\in A_3\}$, the
    distribution of $X_\tau$ has a limit equivalent to the area. }
    \label{fig:3d_example}
\end{figure}{}

\section{A heuristic computation for a simple case}
\label{sec:heuristic}

Let us give a heuristic analysis of the simplest situation with exit distribution behavior that is counterintuitive from the point of view of the FW theory.

Suppose the diffusion $X=X^\eps$ in question is two-dimensional:
\begin{align*}
dX^1_t&=\lambda_1 X^1_t dt +\eps dW^1_t,\\
dX^2_t&=\lambda_2 X^2_t dt +\eps dW^2_t,
\end{align*}
where $\lambda_1>\lambda_2>0$ and $X^1_0=X^2_0=0$, and $W^1,W^2$ are independent standard Wiener processes. We define $\tau=\inf\{t\ge 0:\ X_t\in\partial \DD\}$, where $\DD=(-1,1)^2$ is a square and study the distribution of $X_\tau$, the location of exit from $\DD$.

When $\eps$ is small, it takes a long time to exit, and for large times $t$, the Duhamel principle gives
\begin{equation}
\label{eq:duhamel-in-simple-ex}
X^k_t=\eps e^{\lambda_k t}\int_0^{t}e^{-\lambda_k s}dW^k_s \approx \eps e^{\lambda_k t}N_k,
\end{equation}
 where $N_k=\int_0^{\infty}e^{-\lambda_k s}dW^k_s$ is a centered Gaussian random variable with variance $1/(2\lambda_k)$. Denoting $\tau_k=\inf\{t\ge 0: |X^k_t|=1\}$, $k=1,2$, we obtain from
\eqref{eq:duhamel-in-simple-ex} that
\begin{equation}
\label{eq:tau-in-simple-ex}
\tau_k\approx \frac{1}{\lambda_k}\log\frac{1}{\eps}+\frac{1}{\lambda_k}\log\frac{1}{|N_k|}.
\end{equation}
Therefore, for small $\eps$, typically we have $\tau_1<\tau_2$. Moreover,  plugging \eqref{eq:tau-in-simple-ex} with $k=1$ into \eqref{eq:duhamel-in-simple-ex} with $k=2$, we obtain
\[
X_{\tau_1}^2=\eps^{1-\frac{\lambda_2}{\lambda_1}}|N_1|^{-\frac{\lambda_2}{\lambda_1}}N_2\to 0, \quad \eps\to 0,
\]
so the typical random locations of exit $X_\tau$  will concentrate near points $q_\pm=(\pm1, 0)$ where 
the invariant manifold associated with the leading eigenvalue $\lambda_1$ (i.e.,  the first axis) intersects
$\partial\DD$.

Let us now prohibit exits through the sides of $\DD$ that contain $q_\pm$ and study the unlikely event $B$ of exiting $\DD$ through $[-1,1]\times\{-1,1\}$, i.e., we define
$B=\{|X^2_\tau|=1\}$. It turns out that $\Pro(B)=c\eps^{\frac{\lambda_1}{\lambda_2}-1}(1+o(1))$ and the exit distribution conditioned on $B$ is, somewhat surprisingly, asymptotically uniform on $[-1,1]\times\{-1,1\}$. Let us present a heuristic argument for this.

Introducing events 
\[
A_r=\left\{|N_1|<r \eps^{\frac{\lambda_1}{\lambda_2}-1}|N_2|^{\frac{\lambda_1}{\lambda_2}}\right\},\quad r>0, 
\]
we obtain from~\eqref{eq:tau-in-simple-ex} that 
\[
B=\{\tau_2<\tau_1\}\approx A_1,
\]
and, plugging \eqref{eq:tau-in-simple-ex} with $k=2$ for $t$ into \eqref{eq:duhamel-in-simple-ex} with $k=1$, we obtain that
\[
\{|X_{\tau_2}^1|\le r\}\approx A_r,\quad r>0.
\] 
Next,
\[
\Pro(A_r)=\int_{\Sigma_{r,\eps}}g(x_1,x_2) dx_1 dx_2,
\]
where  $\Sigma_{r,\eps}=\{(x_1,x_2):\ |x_1|<r \eps^{\frac{\lambda_1}{\lambda_2}-1}|x_2|^{\frac{\lambda_1}{\lambda_2}} \}$ and $g$ is the joint Gaussian density of~$N_1$ and $N_2$.
As $\eps\to0$, the domain $\Sigma_{r,\eps}$ shrinks to the axis $\{x_1=0\}$, so we
can approximate $g(x_1,x_2)$ by $g(0,x_2)$ and conclude that
\[
\Pro(A_r)=c\eps^{\frac{\lambda_1}{\lambda_2}-1} r(1+o(1)),\quad \eps\to 0,
\]
where
\[
c=2\int_\R g(0,x)|x|^{\frac{\lambda_1}{\lambda_2}}dx.
\]
Therefore,
\[
\Pro(B)=\Pro\{\tau_2<\tau_1\}=c\eps^{\frac{\lambda_1}{\lambda_2}-1}(1+o(1)),\quad \eps \to 0, \]
and
\[
\frac{\Pro\{|X_{\tau_2}^1|< r\}}{\Pro(B)}\to r,\quad \eps\to 0,
\]
which, due to the symmetry of this example, implies that the limiting distribution is uniform.

Another implication of this calculation is that to realize $B$ one needs typical values of $N_2$ and atypically small values of $N_1$. Due to~\eqref{eq:tau-in-simple-ex}, this translates into typical values of $\tau_2$ and atypically large values of $\tau_1$. One can say that the main effect of conditioning on $B$ is conditioning $X^1$ to stay within $[-1,1]$ for abnormally long times, with only a moderate effect on the evolution of $X^2$.

Let us now expand this example and consider a third coordinate evolving independently according to
\[
dX^3_t=\lambda_3 X^3_t dt +\eps dW^3_t,
\]
with $0<\lambda_3<\lambda_2$ and $X^3_0=0$. Now, the unlikely event of interest $B=\{|X^2_\tau|=1\}$  corresponds
to the exit through the union of two faces of the cube $\DD=(-1,1)^3$ given by $[-1,1]\times\{-1,1\}\times[-1,1]$. From the analysis above we know that the exit will happen
at time~$\tau_2$ corresponding to moderate values of $N_2$, i.e, near $\frac{1}{\lambda_k}\log\frac{1}{\eps}$.
Plugging the expression for~$\tau_2$ from \eqref{eq:tau-in-simple-ex} into \eqref{eq:duhamel-in-simple-ex} for $k=3$, we obtain that $X^3_{\tau_2}=\eps^{1-\frac{\lambda_3}{\lambda_2}}|N_2|^{-\frac{\lambda_3}{\lambda_2}}N_3$. Hence, under conditioning on $B$, $X^3_{\tau_2}$
converges to $0$. Combining this with our analysis of the two-dimensional situation above,
we conclude that the exit distribution converges to the 
uniform distribution on $[-1,1]\times \{-1,1\}\times\{0\}$. This union of two one-dimensional segments should be viewed as the intersection of $\partial\DD$ with the two-dimensional invariant manifold associated with $\lambda_1$ and $\lambda_2$, i.e.,  the  $x_1x_2$-plane.

The goal of this paper may be described as to give a rigorous treatment of this example and its 
generalizations to higher dimensional nonlinear situations with space-dependent diffusion matrix and general domains.

\section{Setting and the main result}\label{section:setting}
In $\R^d$, we consider an open simply connected set $\DD$, a bounded vector field $b:\R^d \to \R^d$ 
and the flow $(S_t)_{t\in\R}$ associated with $b$ via the ODE
\begin{align}\label{eq:ODE}
    \begin{split}
        \tfrac{d}{dt}S_tx & = b(S_tx),\\
    S_0x &= x,
    \end{split}
\end{align}
satisfying the following conditions:
\begin{itemize}
    \item the origin $0\in \DD$;
    \item $b(0)=0$;
\item for all $x\in\DD\sm\{0\}$, the deterministic exit time 
\begin{equation}
\label{eq:deterministic-exit-time}
t(x)=\inf\{t\ge0:\ S_tx\in\partial\DD\}
\end{equation}
 satisfies $t(x)<\infty$. In particular, $x=0$ is the only critical point of $b$ in $\DD$;
    \item $b$ is $C^5$ and $b(x)=\mathbf{a}x+q(x)$ where 
        \begin{itemize}
            \item $q(x)\leq C_q|x|^2$ for some $C_q>0$,
            \item $\mathbf{a}$ is a $d\times d$ diagonal matrix with real entries $\lambda_1>\lambda_2>...>\lambda_d>0$;
        \end{itemize}
    \item $\partial \DD$ is $C^1$;
    \item $b$ is transversal to $\partial\DD$, i.e., $ \langle\mathbf{n}(x), b(x)\rangle> 0$ for all $x \in \partial \DD$, where $\mathbf{n}$ denotes the outer normal of $\partial \DD$.
\end{itemize}

A more general situation where $\mathbf{a}$ is only assumed to have eigenvalues $\lambda_1>\lambda_2>...>\lambda_d>0$ can be reduced to this one by a diagonalizing linear transformation.
Our results also hold for a broad class of domains with piecewise smooth boundaries but we restrict ourselves to domains with smooth boundaries for simplicity. 

\bigskip

By the Hartman--Grobman Theorem (see, e.g., \cite[Theorem 6.3.1]{Dynamics}), there is an open neighborhood $\UU$ of $0$ and a homeomorphism $f: \UU \to f(\UU)$  conjugating the flow~$S$ to the linear flow
$\bar S$ generated by the vector field $x\mapsto \mathbf{a}x$ and given by
$\bar S_tx= xe^{\lam t}= (x^je^{\lj t})_{j=1}^d$, namely, 
\begin{align*}
    \frac{d}{dt}f(S_tx)=\mathbf{a}f(S_tx).
\end{align*}
\begin{itemize}
    \item in addition, we assume that $f$ is a $C^5$ diffeomorphism.
\end{itemize}

\begin{Rem}
Due to \cite{Sternberg-1957:MR96853},  for this  $C^5$ conjugacy condition to hold in our setting, it suffices to require (i)~a smoothness condition:
$b$ is $C^k$ for some $k\ge 5 \vee( \lambda_1 /  \lambda_n)$, and (ii)~a no-resonanse condition:
\[\lambda_k\ne m_1\lambda_1+\ldots+m_d \lambda_d\]
for all $k=1,\ldots,d$ and all  nonnegative integer coefficients $m_1,\ldots, m_d$ satisfying $m_1+\ldots+m_d\ge 2$.
\end{Rem}

The vector field $x\mapsto \mathbf{a}x$ is the pushforward of $b$ under $f$, and since $\mathbf{a}$ is diagonal, $f$ can be chosen to satisfy $f(0)=0$ and $Df(0)= I$, the identity matrix.

\bigskip

We are interested in random perturbations of \eqref{eq:ODE} given by~\eqref{eq:SDE_X},
where
\begin{itemize}
    \item $\eps \in (0,1)$ is the noise amplitude parameter;
    \item $(W_t,\mathcal{F}_t)$ is a standard $n$-dimensional Wiener process with $n\geq d$;
    \item $\sigma=(\sigma^i_j)_{i=1,\ldots,d;\ j=1,\ldots,n} $   
    is a map from $\R^d$ into the space of $d\times n$ matrices satisfying
        \begin{itemize}
            \item $\sigma$ is $C^3$ (and , by adjustments outside $\DD$, we may assume that $\sigma$ has bounded derivatives in $\R^d$),
            \item $\sigma(0):\R^n \to \R^d$ is surjective.
        \end{itemize}
\end{itemize}

We will study the solutions of~\eqref{eq:SDE_X} with 
initial data $X^\eps_0 = \eps \xi_\eps\in\mathcal{F}_0$, where 
\begin{itemize}
    \item $\xi_\eps$ converges to some $\xi_0\in \mathcal{F}_0$ in distribution as $\eps \to 0$;
    \item there are constants $C, c>0$ independent of $\eps$ such that
    \begin{align}\label{eq:xi_exponential_tail}
        \Prob{|\xi_\eps|> x}\leq Ce^{-|x|^c} \text{ for all }x\geq 0, \eps \in [0,1).
    \end{align}
\end{itemize}

To simplify notations, we often suppress the dependence on $\eps$. In particular, we write~$X_t$ instead of $X^\eps_t$. We introduce the first time for $X_t$ to exit $\DD$ as 
\begin{align}\label{eq:def_tau}
    \tau=\tau_\eps=\inf\{t>0:X_t\not\in \DD\}.
\end{align}

\medskip

Our main results concern the asymptotic properties of the distribution of $X_\tau$, the location of exit of $X_\tau=X^\eps_{\tau_\eps}$ from $\DD$. To state them, we need to introduce more definitions and notations.

\begin{itemize}
    \item $\BB_L = [-L,L]^d$, \quad $L>0$;
    \item $\FF^i_{L\pm}=\BB_L\cap \{x\in \R^d:x^i=\pm L \}$ is a face of $\BB_L$, and $\FF^i_L=\FF^i_{L+}\cup\FF^i_{L-}$;
    \item for $A\subset B\subset \R^d$, $\partial_{B}A$ and $\Int_{B}A$ denote the boundary and the interior of~$A$ relative to $B$; 
    \item $\partial=\partial_{\R^d}$, $\partial_L=\partial_{\partial \BB_L}$, $\Int_L=\Int_{\partial \BB_L}$;
    \item $\HH^s$ denotes the $s$-dimensional Hausdorff measure.
\end{itemize}

\smallskip

For $k=1,\ldots,d$,  we define sets $\Lambda^k = \bigoplus_{i=1}^k \R e_i$,  where $(e_i)_{i=1}^d$ is the standard basis for~$\R^d$. The sets $\Lambda^k$ are invariant manifolds for the linear flow $(\bar S_t)$ associated with top~$k$ exponents $\lambda_1,\ldots,\lambda_k$. Therefore, the sets
\begin{align}\label{eq:def_M^k}
M^k=\{x\in\R^d: S_{t}x\in f^{-1}(\Lambda^k)\ \text{for some}\ t\in\R\},\quad k=1,\ldots,d,
\end{align}
are the $k$-dimensional invariant manifolds associated with top $k$ exponents for the flow~$(S_t)$. Let us define the traces of these manifolds on the boundary by $N^k=M^k\cap \partial\DD$ and note that due to our transversality assumptions, $N^k$ is a $(k-1)$-dimensional $C^1$-manifold. In particular $N^1$ consists of two points, $N^2$ is a closed curve in $\partial \DD$, and $N^d$ coincides with~$\partial\DD$.

For any set $A\subset\partial \DD$ we define the index of $A$ to be
\begin{equation*}
\indset(A)=\min \big\{ k \in\{ 1,\ldots d\}: \overline A\cap N^k \neq \emptyset \big\},
    \end{equation*}
see Figure~\ref{fig:3d_example}.
This notion is going to be useful because we will show that due to the presence of different exponential growth rates in different directions,  the probabilities for the system to exit~$\DD$ near $N^k$ have different orders of magnitude for different values of $k$. Thus,  the index of~$A$ picks the manifold with the  dominating contribution.  However, this notion becomes truly meaningful and helps computing the asymptotics of exit probabilities only for sets with an additional regularity property which is compatible with the notion of weak convergence of probability measures, holds true for most relatively open subsets of~$\partial\DD$,  and which we proceed to define.

Assuming $d\ge 2$, we say that a set $A\subset\partial \DD$ is $N$-regular if it is Borel and satisfies
\begin{equation}
\label{eq:regularity-def}
\HH^{\indset(A)-1}\{\partial_{\partial \DD} A\cap N^{\indset(A)}\} = 0.
\end{equation}
In the case of $d=1$,  all subsets of  $\partial\DD$ are considered to be $N$-regular.

We still need a few more elements of our construction.

There is  a Euclidean ball $O$ centered  at $0$, satisfying $\overline{f^{-1}(O)}\subset \UU$, and   such that the vector field $x\mapsto\mathbf{a}x$ is transversal to $\partial O$. 
Let us fix~$O$ and define
    \begin{align}\label{eq:def_L(O)}
        L(O)=\sup\{L>0: \BB_L\subset O\}.
    \end{align}
For every $L\in(0,L(O))$, we can define $\psi_L:f^{-1}(\partial\BB_L) \to\partial \DD$  as the Poincare 
map along the flow $(S_t)$:
\begin{equation}
\label{eq:psi_L}
\psi_L(x)=S_{t(x)}x,\quad x\in f^{-1}(\partial\BB_L),
\end{equation}
where $t(\cdot)$ was introduced in~\eqref{eq:deterministic-exit-time}.
We can now define 
\begin{equation}
\zeta_L=f\circ \psi_L^{-1}:\partial\DD\to\partial\BB_L.
\label{eq:def_of-zeta}
\end{equation}

For $x,y\in\R^d$, we define 
\begin{gather*}
\tilde{\chi}^{i}(x^i,y)= \frac{1}{\sqrt{(2\pi)^d\det\Czero}}|x^i+y^i|^{\sum_{j<i}\fl} \int_{ \R^{d-i}}e^{-\frac{1}{2}x^\intercal\mathcal{C}^{-1}_0 x}\Big|_{(x^1,\ldots,x^{i-1})=-(y^1,\ldots,y^{i-1})}dx^{i+1}\ldots dx^d,\\     
           \chi^i_+(y)=\int_{[-y^i,\infty)}\tilde{\chi}^i(x^i,y)dx^i \quad \text{and}\quad \chi^i_-(y)=\int_{(-\infty, -y^i]}\tilde{\chi}^i(x^i,y)dx^i, \end{gather*}
where 
\begin{align}\label{eq:def_C_0_Czero}
    \Czero^{jk}= \sum_{l=1}^n\frac{\sigma^j_l(0)\sigma^k_l(0)}{\lj + \lk}.
\end{align}

For $L<L(O)$ and $i=1,2,\dots,d$, we define the following measure on $\partial \DD$:
\begin{align}\label{eq:def_mu_L}
    \mu^i_L(A)=L^{-\sum_{j<i}\frac{\lj}{\lambda_{i}}}\sum_{\bullet \in \{+,-\}} \EE\chi^i_\bullet(\xi_0)\cdot \HH^{i-1}(\zeta_L(A) \cap \FF^i_{L\bullet}\cap \Lambda^{i} ) , \quad A\subset\partial \DD.
\end{align}
In this definition, the set $\FF^i_{L\bullet}\cap \Lambda^{i}$ is the union of two $(i-1)$-dimensional rectangles:
\[
\FF^i_{L\bullet}\cap \Lambda^{i}=[-L,L]^{i-1}\times\{-L,L\}\times\{0\}^{d-i},
\]
and $\HH^{i-1}(\,\cdot\, \cap \FF^i_{L\bullet}\cap \Lambda^{i})$ is simply the $(i-1)$-dimensional Euclidean volume (Lebesgue measure),
so the measure $\mu^i_L$ is Radon--Nikodym equivalent to the volume measure on of~$N^i\cap \zeta^{-1}(\FF^i_{L})$.

Recalling the definition of $\rho_i$ in~\eqref{eq:rho_i}, we can now state our main result.

\begin{Th} \label{thm:main}
 If $A$ is an $N$-regular set with index $i$, then there is $L_A\in(0,L(O))$  such that
 for all $L\in (0,L_A)$
\begin{align}\label{eq:in_the_main_result}
 \lim_{\eps\to 0}\eps^{-\rho_{i}}\Prob{X_\tau\in A} =\mu^{i}_L(A).
\end{align}
\end{Th}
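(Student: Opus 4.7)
The plan is to push the exit problem from $\partial\DD$ down to the boundary of a small inner cube $\BB_L$ where the dynamics becomes essentially linear, and then analyze the linear exit problem via a Duhamel representation and Gaussian asymptotics. Setting $\tilde X_t = f(X_t)$, It\^o's formula gives
\begin{equation*}
 d\tilde X_t = \mathbf{a}\tilde X_t\, dt + \eps\, \tilde\sigma(\tilde X_t)\, dW_t + \eps^2 R(\tilde X_t)\, dt,
\end{equation*}
with $\tilde\sigma(0) = \sigma(0)$ since $Df(0)=I$. Let $\tilde\tau_L = \inf\{t:\tilde X_t\in\partial\BB_L\}$. After exiting $\BB_L$, the trajectory of $X$ stays with overwhelming probability within a narrow tube around the deterministic flow until it hits $\partial\DD$, which gives $\zeta_L(X_\tau) \approx \tilde X_{\tilde\tau_L}$ and reduces the problem to studying $\Prob{\tilde X_{\tilde\tau_L}\in \zeta_L(A)}$; this is where the transversality of $b$ and the smoothness of $\partial\DD$ are used to ensure that $\psi_L$ is a diffeomorphism well-controlled uniformly for $L<L_A$.

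Inside the cube, the Duhamel representation combined with standard stochastic-integral convergence gives
\begin{equation*}
 \tilde X^k_t = \eps e^{\lambda_k t}\Bigl(\xi^k_\eps + \textstyle\int_0^t e^{-\lambda_k s}(\sigma(0)\, dW_s)^k + r^k_\eps(t)\Bigr),
\end{equation*}
with remainder $r^k_\eps(t)$ of smaller order uniformly on timescales $O(\log\eps^{-1})$. Writing $N_k = \xi^k_0 + \int_0^\infty e^{-\lambda_k s}(\sigma(0)\,dW_s)^k$, the limit vector $(N_1,\ldots,N_d)$, conditional on $\xi_0$, is centered Gaussian with covariance $\Czero$. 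Denoting $\tilde\tau_k = \inf\{t:|\tilde X^k_t|=L\}$, one has $\tilde\tau_k \approx \lambda_k^{-1}\log(L/(\eps|N_k|))$, so that exit through face $\FF^i_{L\bullet}$ at a location close to $y\in\FF^i_{L\bullet}\cap\Lambda^i$ translates, modulo negligible terms, into the event $\{\tilde\tau_i<\tilde\tau_j\ \forall j<i\}$ (which fixes the face) together with the localization $\tilde X^j_{\tilde\tau_i}\approx y^j$ for $j<i$, while $\tilde X^j_{\tilde\tau_i}\to 0$ automatically for $j>i$ because $\lambda_j<\lambda_i$. A direct computation gives
\begin{equation*}
 \tilde X^j_{\tilde\tau_i} \approx L^{\lambda_j/\lambda_i}\,\eps^{1-\lambda_j/\lambda_i}\,|N_i|^{-\lambda_j/\lambda_i}\,N_j,
\end{equation*}
so that for each $j<i$ the change of variables $N_j\mapsto\tilde X^j_{\tilde\tau_i}$ has Jacobian $L^{-\lambda_j/\lambda_i}\eps^{\lambda_j/\lambda_i-1}|N_i|^{\lambda_j/\lambda_i}$. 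Taking the product over $j<i$ produces the overall factor $\eps^{\rho_i}\,L^{-\sum_{j<i}\lambda_j/\lambda_i}\,|N_i|^{\sum_{j<i}\lambda_j/\lambda_i}$, which reproduces both the prefactor in~\eqref{eq:def_mu_L} and the weight $|x^i+y^i|^{\sum_{j<i}\lambda_j/\lambda_i}$ inside $\tilde\chi^i$.

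To conclude, one integrates the joint Gaussian density of $(N_1,\ldots,N_d)$ against the constraints $N_j=0$ for $j<i$ (which are enforced to leading order by the localization), integrates out $(N_{i+1},\ldots,N_d)$, restricts the $N_i$-integration to the half-line forced by the sign $\bullet$, and averages over $\xi_0$; this generates the density $\tilde\chi^i$ and the factor $\EE\chi^i_\bullet(\xi_0)$, while the remaining free coordinates of the exit point produce the $\HH^{i-1}$-integral over $\zeta_L(A)\cap\FF^i_{L\bullet}\cap\Lambda^i$, recovering $\mu^i_L(A)$ exactly. The $N$-regularity condition~\eqref{eq:regularity-def} enters at this last step to promote the weak-type convergence of the rescaled conditional exit distribution into convergence of the probability of $A$ itself: the limit is Radon--Nikodym equivalent to $\HH^{i-1}$ on $N^i$, so the hypothesis $\HH^{i-1}(\partial_{\partial\DD}A\cap N^i)=0$ is exactly the portmanteau continuity requirement that allows one to replace $A$ by its interior or closure without changing the limit.

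The principal technical obstacle is to make the Duhamel approximation above quantitatively rigorous precisely on the rare event $\bigcap_{j<i}\{|N_j|\ \text{atypically small}\}$, where the naive replacement of $r^k_\eps$ by $0$ could in principle be corrupted by conditioning on unlikely values of $(N_j)_{j<i}$. This calls for precise bounds on the joint density of the stochastic approximants near the degenerate submanifold $\{N_j=0\}_{j<i}$, which are exactly the Malliavin calculus density estimates from~\cite{exit_time}; they are used to control both the localization error on $\bar X^j_{\tilde\tau_i}$ and the contribution from the nonlinear remainder $q(x)$ and the spatial variation of $\sigma$ inside $\BB_L$. A secondary technical point is the choice of $L<L_A$ small enough that these nonlinear remainders (of order $L^2$) together with the Poincar\'e-map distortion contribute only $o(\eps^{\rho_i})$, which couples the admissible $L$ to the geometry of $A$.
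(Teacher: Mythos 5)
Your proposal follows the same route as the paper: push the exit problem to the boundary of a small cube $\BB_L$ via the Poincar\'e map and a Freidlin--Wentzell tube estimate, analyze the exit from the cube through a Duhamel representation with Gaussian limit, extract the factor $\eps^{\rho_i}$ from the anisotropic change of variables in the directions $j<i$, and control the error of the Gaussian approximation on the rare event with the Malliavin density bounds of~\cite{exit_time}. Your identification of the limiting density $\tilde\chi^i$, of the origin of the prefactor $L^{-\sum_{j<i}\lambda_j/\lambda_i}$, and of the main technical obstacle is correct.

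There is, however, one concrete step missing. Your cube analysis treats only trajectories leaving through the faces $\FF^i_{L\pm}$, which amounts to assuming $\zeta_L(A)\subset\FF^i_L$. For a general $N$-regular $A$ of index $i$ this is false: by shrinking $L$ one can only guarantee $\overline{\zeta_L(A)}\cap\FF^j_L=\emptyset$ for $j<i$ (part~(1) of Proposition~\ref{Prop:geometry_of_pull_back}); the image $\zeta_L(A)$ may still meet the faces $\FF^j_L$ with $j>i$. The paper handles this by splitting $A\subset A_0\cup A_1$ with $\zeta_L(A_0)\subset\Int_{\partial\BB_L}\FF^i_L$, $\indset(A_0)=i$ and $\indset(A_1)=i+1$, and running a downward induction on the index: the induction hypothesis gives $\Prob{X_\tau\in A_1}=\mathcal{O}(\eps^{\rho_{i+1}})=\smallo{\eps^{\rho_i}}$, so only $A_0$ contributes to the limit. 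Without some such argument (induction, or a direct bound showing that exit through any face $\FF^j_L$ with $j>i$ has probability $\mathcal{O}(\eps^{\rho_j})$), your reduction silently drops part of the event $\{X_\tau\in A\}$. A secondary omission: the cube computation is carried out for rectangles, so one must sandwich $\zeta_L(A)$ between finite unions of rectangles whose $\HH^{i-1}$-measures on $\Lambda^i$ converge to that of $\zeta_L(A)\cap\Lambda^i$ (Lemma~\ref{lemma:existence_of_delta_set}); this approximation is where the $N$-regularity hypothesis is actually consumed, rather than in a generic portmanteau argument at the very end. Both steps are fillable, but they are genuine parts of the proof.
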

\begin{Rem}\label{rem:LA-ordered}
 The proof of this theorem also implies that the family of numbers $(L_A)$  indexed by $N$-regular sets $A$ can be chosen to satisfy  $L_{A'}\geq L_A$ for  $A'\subset A$.
\end{Rem}
\begin{Rem} Note that the scaling exponent~$\rho_i$  and the limiting constant $\mu^{i}_L(A)$ in~\eqref{eq:in_the_main_result} are defined explicitly. Thus,  \eqref{eq:in_the_main_result} provides a very precise approximation.
Although the right-hand side of (\ref{eq:in_the_main_result}) seemingly
involves $L$, in fact, it does not depend on $L\in(0,L_A)$.  It is easy to see that
 $L^{-\sum_{j<\indset}\fl}$ in the definition of~$\mu^i_L(A)$ is the correct scaling factor compensating for distortions in directions $1,\ldots, i-1$ introduced by the linear flow that is a part of the definition of $\zeta_L(A)$.
We also note that $N$-regular sets $A$ such that $\mu^i_L(A)>0$ (so Theorem~\ref{thm:main} provides the truly leading
term in the asymptotics) form a large class that includes, for example,  $\zeta_L$-preimages of small open balls with centers in $\FF^i_{L\bullet}\cap \Lambda^{i}$.

\end{Rem}

\medskip
According to Theorem~\ref{thm:main}, the decay rate of probability of exit is the same for all $N$-regular sets of the same index $i$. This, along with the fact that $N$-regular sets are specifically defined to be continuity sets for $\HH^{i-1}$, allows us to state a corollary on the limiting behavior of conditional exit distributions.

If $\Prob{X_\tau \in A}\neq 0$, let $\nu_A^\eps$ be the exit distribution of $X$ conditioned on exiting from $A$:
\begin{align*}
   \nu_A^\eps(\cdot)= \frac{\Prob{X_\tau \in \,\cdot\,\cap A}}{\Prob{X_\tau \in A}}.
\end{align*} 

We denote the weak convergence of finite positive Radon measures by ``$\rightharpoonup$''. 

\begin{Th}\label{Thm:2} Let $A$ be an $N$-regular set of index $i$ and suppose that
 $\mu^{i}_L( A)> 0$. Then for all $L< L_A$ the following weak convergence holds as $\eps \to 0$:
\begin{align*}
    \nu_A^\eps \rightharpoonup \frac{\mu^{i}_L(\,\cdot\, \cap A)}{\mu^{i}_L( A)}.
\end{align*}
\end{Th}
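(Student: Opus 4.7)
My plan is to deduce Theorem~\ref{Thm:2} from Theorem~\ref{thm:main} by applying the latter to both the numerator $\Prob{X_\tau\in A\cap B}$ and the denominator $\Prob{X_\tau\in A}$ of $\nu_A^\eps(B)$ for a sufficiently rich class of test sets $B$, and then invoking the Portmanteau theorem. The denominator is handled immediately: since $A$ is $N$-regular of index~$i$ and $\mu^i_L(A)>0$, Theorem~\ref{thm:main} gives $\Prob{X_\tau\in A}=\eps^{\rho_i}\mu^i_L(A)(1+o(1))$.

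The key step is constructing a convergence-determining class $\mathcal{G}$ of relatively open subsets $B\subset\partial\DD$ such that (i)~$B$ is a continuity set of the limit $\nu_A:=\mu^i_L(\,\cdot\,\cap A)/\mu^i_L(A)$, and (ii)~$A\cap B$ is $N$-regular. By the definition~\eqref{eq:def_mu_L}, $\mu^i_L$ is Radon--Nikodym equivalent to a constant multiple of $\HH^{i-1}$ on $N^i\cap \zeta_L^{-1}(\FF^i_L\cap \Lambda^i)$, so (i) reduces to $\HH^{i-1}(\partial_{\partial\DD} B\cap N^i)=0$. Using the elementary inclusion $\partial_{\partial\DD}(A\cap B)\subset\partial_{\partial\DD}A\cup\partial_{\partial\DD}B$, the $N$-regularity of $A$ combined with (i) yields (ii). To produce such a class $\mathcal{G}$, I would take a base of geodesic balls $B_r(x)\subset\partial\DD$ and, via a Sard-type transversality argument, select radii $r$ (outside a Lebesgue null set for each center $x$) for which $\partial_{\partial\DD}B_r(x)$ is transverse to the $C^1$-submanifold $N^i$, guaranteeing $\HH^{i-1}(\partial_{\partial\DD}B_r(x)\cap N^i)=0$.

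For $B\in\mathcal{G}$ there are two cases. If $\overline{A\cap B}\cap N^i\ne\emptyset$, then $\indset(A\cap B)=i$ (because $\indset(A)=i$ forces $\overline{A\cap B}\cap N^j=\emptyset$ for all $j<i$). Invoking Remark~\ref{rem:LA-ordered} to pick a common $L<L_A\le L_{A\cap B}$ and applying Theorem~\ref{thm:main} to both $A$ and $A\cap B$,
\begin{equation*}
\nu_A^\eps(B)=\frac{\eps^{\rho_i}\mu^i_L(A\cap B)(1+o(1))}{\eps^{\rho_i}\mu^i_L(A)(1+o(1))}\longrightarrow \frac{\mu^i_L(A\cap B)}{\mu^i_L(A)}=\nu_A(B).
\end{equation*}
If instead $\overline{A\cap B}\cap N^i=\emptyset$, then $\indset(A\cap B)\ge i+1$, and Theorem~\ref{thm:main} yields $\Prob{X_\tau\in A\cap B}=O(\eps^{\rho_{i+1}})$, so $\nu_A^\eps(B)=O(\eps^{\rho_{i+1}-\rho_i})\to 0=\nu_A(B)$. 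The Portmanteau theorem then completes the proof.

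The main obstacle will be the construction of $\mathcal{G}$---specifically the geometric-measure-theoretic step establishing transversality of $\partial_{\partial\DD}B_r(x)$ to $N^i$ for almost every radius $r$. This is a Sard-type argument that must be handled carefully because $N^i$ is only $C^1$ and we need $\HH^{i-1}$-null intersections, not merely positive-codimension intersections. Once this geometric input is secured, the remainder of the argument is a direct corollary of Theorem~\ref{thm:main} and Remark~\ref{rem:LA-ordered}.
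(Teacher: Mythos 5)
Your overall strategy (a convergence-determining class of ``good'' balls plus Portmanteau, rather than the paper's direct verification on all continuity sets of the limit) is workable, and your first case and the treatment of the denominator are fine. But there is a genuine gap in your second case. When $\overline{A\cap B}\cap N^i=\emptyset$ you conclude $\indset(A\cap B)\ge i+1$ and then ``Theorem~\ref{thm:main} yields $\Prob{X_\tau\in A\cap B}=O(\eps^{\rho_{i+1}})$.'' Theorem~\ref{thm:main} only applies to sets that are $N$-regular \emph{at their own index}: if $\indset(A\cap B)=k\ge i+1$, you need $\HH^{k-1}\{\partial_{\partial\DD}(A\cap B)\cap N^{k}\}=0$. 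Your construction of $\mathcal{G}$ only controls $\HH^{i-1}(\partial_{\partial\DD}B\cap N^i)$, and the hypothesis on $A$ only controls $\HH^{i-1}(\partial_{\partial\DD}A\cap N^i)$; nothing prevents $\partial_{\partial\DD}A$ from containing a piece of $N^{i+1}$ of positive $\HH^{i}$-measure, in which case $A\cap B$ fails to be $N$-regular and Theorem~\ref{thm:main} simply does not apply to it. The paper closes exactly this hole by a domination argument: since $\overline{\zeta_L(A\cap B)}\cap\Lambda^i=\emptyset$, one has $r=\dist(\zeta_L(A\cap B),\Lambda^i)>0$, the set $\zeta_L^{-1}\big(\{x\in\BB_L:\dist(\{x\},\Lambda^i)\ge r\}\big)$ \emph{is} $N$-regular of index $i+1$ (by Lemma~\ref{lemma:Hyperbolic_Pull_Back}), it contains $A\cap B$, and Theorem~\ref{thm:main} applied to this larger set gives $\Prob{X_\tau\in A\cap B}=\smallo{\eps^{\rho_i}}$. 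You need this (or an equivalent enclosing-set device) to make your second case correct.

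Two smaller remarks. First, invoking ``the Portmanteau theorem'' for a restricted class of test sets requires the class to be a $\pi$-system such that every open set is a countable union of its members (Billingsley's criterion); balls are not closed under intersection, though since $\partial(B_1\cap B_2)\subset\partial B_1\cup\partial B_2$ the generated $\pi$-system inherits your conditions (i) and (ii), so this is repairable but should be said. Second, the Sard-type transversality argument you flag as the main obstacle is unnecessary: for a fixed center $x$ the sets $\partial_{\partial\DD}B_r(x)\cap N^i$ are pairwise disjoint over $r$, and $N^i$ is a compact $(i-1)$-dimensional $C^1$ manifold with finite $\HH^{i-1}$-measure, so at most countably many radii can give $\HH^{i-1}(\partial_{\partial\DD}B_r(x)\cap N^i)>0$. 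The paper avoids the whole construction by working directly with arbitrary continuity sets $B$ of the limit measure and reducing to Borel subsets of $A$ via $\nu^\eps_A(B\cap A^c)=0$, which is both shorter and sidesteps the $\pi$-system issue.
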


\begin{Rem}
The definition of $\mu^i_L$ in \eqref{eq:def_mu_L} together with the bi-Lipschitzness of $\zeta_L$ and~\eqref{eq:zeta_L_map} implies that $\mu^i_L(\,\cdot\, \cap A)$ is equivalent to the $(i-1)$-dimensional Hausdorff measure restricted to $\overline{ A}\cap N^{i-1}$.
\end{Rem}

In the special case where
$b(x)\equiv \mathbf{a}x$, $\DD=\Int \BB_L$, $A=\FF^i_{L\pm}$ for some $L>0$ and $i=1,\ldots,d$, the limiting measure in Theorem~\ref{Thm:2} is the uniform distribution on $\FF^i_{L\pm} \cap \Lambda^i$. Thus Theorems~\ref{thm:main} and~\ref{Thm:2} are natural generalizations of the simple $2$- and $3$-dimensional 
equidistribution examples discussed in Section~\ref{sec:heuristic}.

  It is important to stress that Theorem~\ref{Thm:2}, where the limiting conditional distribution is equivalent to the volume measure on the manifold $N^i$, paints a picture drastically different from the typical large deviations picture where the limiting conditional distributions are often point masses  concentrated at the minimizers of the large deviation rate function.

The unconditioned exit distribution was also shown to converge to an explicitly computed limit equivalent to the volume on a manifold of smaller dimension in~\cite{Eizenberg:MR749377}. In that paper, the eigenvalues of the linearization are not required to be simple but the assumptions on nonlinearity are fairly restrictive. At the core of the results of \cite{Eizenberg:MR749377} and ours, is the fact that the transition probability over a small time interval is approximately Gaussian and this distribution is carried to the boundary almost deterministically by the flow, different directions being stretched with different rates. However, our results are more delicate since we have to zoom into the transition distribution studying its regularity at small scales with Malliavin calculus tools.

The plan of the proof is the following.

We are going to decompose the dynamics into two stages: (i) the evolution in the transformed coordinates until the exit from a small cube $\BB_L$ (or, equivalently, from $f^{-1}(\BB_L)$ in the original coordinates) and (ii) the evolution between exiting from $f^{-1}(\BB_L)$ and exiting from $\DD$. In the second stage, the process essentially follows the deterministic flow trajectory $(S_t)$ and the associated 
Poincare map $\psi_L$, with error controlled
by a FW large deviation estimate, so it is stage (i) that is central to the analysis. During stage~(i), the evolution is well approximated
by a Gaussian process due to approximate linearity of the drift, so to obtain the desired asymptotics we combine direct computations for this Gaussian process with estimates on the error of the Gaussian approximation based on Malliavin calculus bounds previously obtained in~\cite{exit_time}.

\section{Proof of the main result}\label{sec:derivation-main-result}
Theorem~\ref{thm:main} will follow from two results that we give first.
The first result helps to reduce the problem to considering only sets $A$
with $\zeta_L(A)$ being a subset of the union of two faces of $\BB_L$ associated with coordinate $\indset(A)$, and the second one computes the asymptotic probability of exit through such a set.

\begin{Prop} \label{Prop:geometry_of_pull_back}
  Let $A\subset\partial \DD$  satisfy $\indset(A)=i$. 
  The number~$L'_A$ defined by
  \begin{align*}L'_A=\sup\{L\in(0,L(O)):\ \overline{\zeta_L(A)}\cap \FF^j_L=\emptyset, \forall j<i; \BB_L\subset O\}
  \end{align*}
  is positive, (i.e., the set under the supremum is nonempty)  and for all $L<L'_A$ we have
 \begin{enumerate}
     \item $\overline{\zeta_L(A)}\cap \FF^j_L=\emptyset$ for all $j<i$; \label{item:1_of_prop}
     \item if $i<d$ and $A$ is $N$-regular, then there are $N$-regular $A_0, A_1\subset \partial \DD$ such that \label{item:2_of_prop}
     \begin{enumerate}
         \item $A_0 \subset A \subset A_0\cup A_1$;\label{item:a_of_prop}
         \item $\indset(A_0)=i$; \quad $\zeta_L(A)\cap \Lambda^i = \zeta_L(A_0)\cap \Lambda^i $; \quad$\zeta_L(A_0)\subset  \Int_{\partial\BB_L}{\FF^i_L}$;  \label{item:b_of_prop}
         \item $\indset(A_1)= i+1$.\label{item:c_of_prop} 
     \end{enumerate}
 \end{enumerate}
\end{Prop}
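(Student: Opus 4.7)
The plan is to use the conjugacy $f$ and the Poincar\'e map $\psi_L$ to reduce everything to explicit computations with the linear flow $\bar S$. Fix some $L_0 \in (0, L(O))$ and, for $y \in \partial\DD$, set $x(y) = f(\psi_{L_0}^{-1}(y))$, a $C^1$ map $\partial\DD \to \partial\BB_{L_0}$. Invariance of the $\Lambda^k$ under $\bar S$ and of $M^j$ under $S$ yields, for $L \in (0, L_0)$,
\[
\zeta_L(y) = \bigl(x(y)^k e^{-\lambda_k T}\bigr)_{k=1}^d, \qquad y \in N^j \iff x(y)^k = 0 \text{ for all } k > j,
\]
where $T = T(L,y) > 0$ is determined by $\max_k |x(y)^k|e^{-\lambda_k T} = L$. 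Writing $j^*(y) := \max\{k : x(y)^k \neq 0\}$, the hypothesis $\indset(A) = i$ (i.e., $\overline A \cap N^{i-1} = \emptyset$) forces $j^*(y) \geq i$ for every $y \in \overline A$.

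For item~(1) and positivity of $L'_A$, I argue by contradiction. Assume there exist $L_n \downarrow 0$, $y_n \in \overline A$, and a fixed $j < i$ with $\zeta_{L_n}(y_n) \in \FF^j_{L_n}$; pass to a subsequence so that $y_n \to y \in \overline A$. The constraint $|x(y_n)^j|e^{-\lambda_j T_n} = L_n$ forces $T_n \to \infty$, and the bound $|x(y_n)^m|e^{-\lambda_m T_n} \leq L_n$ for all $m$ gives
\[
\frac{|x(y_n)^m|}{|x(y_n)^j|} \;\leq\; e^{(\lambda_m - \lambda_j) T_n} \;\xrightarrow[n\to\infty]{}\; 0 \qquad \text{for every } m > j,
\]
since $\lambda_m < \lambda_j$. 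Continuity of $x(\cdot)$ on the compact set $\overline A$ bounds $|x(y_n)^j|$ above, so $x(y)^m = 0$ for all $m > j$, placing $y \in N^j \subset N^{i-1}$, a contradiction. The admissible set of $L$ is an initial interval $(0, L'_A)$ because $k^*(T) := \operatorname{argmax}_k |x(y)^k|e^{-\lambda_k T}$ is nondecreasing in $T$ (slower-contracting coordinates take over for large $T$), so item~(1) at $L$ is equivalent to $k^*(T(L,y)) \geq i$ for every $y \in \overline A$, a property downward-closed in $L$ since $T$ is decreasing in $L$.

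For item~(2), set
\[
A_0 := A \cap \zeta_L^{-1}(\Int_{\partial\BB_L}\FF^i_L), \qquad A_1 := \zeta_L^{-1}\!\Bigl(\bigcup_{j > i} \FF^j_L\Bigr).
\]
Part~(a) follows because item~(1) gives $A \cap \zeta_L^{-1}(\FF^j_L) = \emptyset$ for $j < i$, hence $A \setminus A_0 \subset A_1$. The equality $\zeta_L(A) \cap \Lambda^i = \zeta_L(A_0) \cap \Lambda^i$ is immediate from $\FF^j_L \cap \Lambda^i = \emptyset$ for $j > i$. For $\indset(A_0) = i$, any $y_0 \in \overline A \cap N^i$ satisfies $j^*(y_0) = i$ (as $y_0 \notin N^{i-1}$), so $\zeta_L(y_0)$ lies strictly inside $\FF^i_L$ for $L < L'_A$, and continuity of $\zeta_L$ then gives $y_0 \in \overline{A_0}$. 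For $\indset(A_1) = i+1$, $\zeta_L(N^i) \subset \Lambda^i$ is disjoint from $\bigcup_{j>i}\FF^j_L$ (so $A_1 \cap N^i = \emptyset$), while $\zeta_L^{-1}(\Lambda^{i+1} \cap \Int_{\partial\BB_L}\FF^{i+1}_L)$ is a nonempty subset of $A_1 \cap N^{i+1}$. For $N$-regularity of $A_0$, $\partial_{\partial\DD} A_0 \subset \partial_{\partial\DD} A \cup \zeta_L^{-1}(\partial_{\partial\BB_L}\FF^i_L)$; the first term is $\HH^{i-1}$-null on $N^i$ by $N$-regularity of $A$, and the second, transported by the bi-Lipschitz $\zeta_L$, is contained in $\Lambda^i \cap \bigcup_{j<i}(\FF^i_L \cap \FF^j_L)$, a finite union of $(i-2)$-dimensional rectangles; the parallel calculation for $A_1$ yields $\HH^i$-nullness on $N^{i+1}$.

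The main technical hurdle is the uniform-in-$y$ compactness step of item~(1): the index $j^*(y)$ is discontinuous and $|x(y_n)^{j^*(y_n)}|$ may collapse along a sequence, yet the displayed inequality sidesteps this by comparing $|x(y_n)^m|$ only to $|x(y_n)^j|$ for the \emph{fixed} $j < i$, reducing the argument to continuity of the single map $x(\cdot)$.
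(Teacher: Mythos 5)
Your proof is correct, and it rests on the same underlying mechanism as the paper's (for $m>j$ the ratio $|x^m|e^{-\lambda_m T}/|x^j|e^{-\lambda_j T}$ tends to $0$ as $T\to\infty$), but the packaging is genuinely different in two places. For item (1) the paper fixes a ratio threshold $r>0$ on the compact set $\overline{\zeta_{L_0}(A)}$ and a time $t_0$ after which every $x\in\FF^j_L$, $j<i$, is carried outside that set; you instead argue by sequential compactness along $L_n\downarrow 0$ and then add a monotonicity step (the maximizing index of $|x(y)^k|e^{-\lambda_k T}$ can only move up as $T$ grows) to upgrade ``item (1) for all small $L$'' to ``item (1) for all $L<L'_A$''. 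That upgrade is actually needed, since $L'_A$ is defined as a supremum, and the paper passes over it silently, so your version is the more complete one on this point. For item (2) the paper takes $A_0=A\cap\zeta_L^{-1}(B)$ with $B=\{x\in\partial\BB_L:\ |x_j|\le L/2\ \forall j>i\}$ and $A_1=\zeta_L^{-1}(\partial\BB_L\sm B)$, whereas you take $A_0=A\cap\zeta_L^{-1}(\Int_{\partial\BB_L}\FF^i_L)$ and $A_1=\zeta_L^{-1}\big(\bigcup_{j>i}\FF^j_L\big)$; both choices yield (2a)--(2c) and $N$-regularity by the same boundary-inclusion and dimension count, and yours is arguably the more canonical decomposition. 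Two small points to tighten: $T_n\to\infty$ does not follow from $|x(y_n)^j|e^{-\lambda_j T_n}=L_n$ alone, since $|x(y_n)^j|$ could tend to $0$; use instead $\max_k|x(y_n)^k|=L_0$, which gives $e^{-\lambda_1 T_n}\le L_n/L_0$. And the identity $\zeta_L(A)\cap\Lambda^i=\zeta_L(A_0)\cap\Lambda^i$ is not immediate from $\FF^j_L\cap\Lambda^i=\emptyset$ for $j>i$ alone; you also need item (1) to exclude the faces with $j<i$ and thereby force $|z^i|=L$ for $z\in\zeta_L(A)\cap\Lambda^i$. Both are routine fixes, not gaps.
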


\begin{Prop} \label{Prop:measure-theoretical_prop_of_pull-back}
There is $L_0>0$ such that the following holds. Let $A\subset\partial \DD$ be an arbitrary $N$-regular set with $\indset(A)=i$. If $L<L_0$ satisfies  $\overline{\zeta_L(A)}\subset  \Int_{\partial\BB_L}{\FF^i_L}$, then 
\begin{align*}
    \lim_{\eps\to 0}\eps^{-\rho_i}\Prob{X_\tau \in A} =  \mu^i_L(A) .
\end{align*}
\end{Prop}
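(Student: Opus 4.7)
The plan is to follow the two-stage decomposition described in the introduction. Stage (ii), the passage from $\partial f^{-1}(\BB_L)$ to $\partial\DD$ along the flow, is essentially deterministic: the hypothesis $\overline{\zeta_L(A)}\subset \Int_{\partial\BB_L}\FF^i_L$ together with the transversality of $b$ to $\partial\DD$ makes the Poincar\'e map $\psi_L$ in~\eqref{eq:psi_L} Lipschitz and bounded, and classical Freidlin--Wentzell estimates show that, after exiting $f^{-1}(\BB_L)$, the trajectory of $X$ follows $\psi_L$ to within an error of $\eps^{1-\delta}$ with probability $1-O(e^{-c/\eps^2})$. Consequently, up to an event of exponentially small probability, $\{X_\tau\in A\}$ is equivalent to $\{Y_{\tau_L}\in \zeta_L(A)\}$, where $Y:=f(X)$ and $\tau_L$ is the exit time of $Y$ from $\BB_L$. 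In what follows I work with $Y$, which solves an SDE of the form $dY_t=(\mathbf{a}Y_t+\tilde q(Y_t))dt+\eps\tilde\sigma(Y_t)dW_t$ with $\tilde q(y)=O(|y|^2)$ and $\tilde\sigma(0)=\sigma(0)$ (using $Df(0)=I$).

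The core of the argument is a direct Gaussian computation for the linearization $\bar Y$ solving $d\bar Y_t=\mathbf{a}\bar Y_t\,dt+\eps\sigma(0)dW_t$, $\bar Y_0=\eps\xi_\eps$. Duhamel gives $\bar Y^k_t = \eps e^{\lambda_k t}(\xi_\eps^k+\int_0^t e^{-\lambda_k s}\sum_\ell\sigma^k_\ell(0)dW^\ell_s)$, and on the timescale $t\asymp\log\eps^{-1}$ the stochastic integrals converge a.s.\ to a centered Gaussian vector $N\in\R^d$ with covariance $\Czero$ from~\eqref{eq:def_C_0_Czero}. To leading order the coordinate exit times are $\tau^{\bar Y}_k\approx\lambda_k^{-1}\log(L\eps^{-1}|N_k+\xi_\eps^k|^{-1})$, and the event that $\bar Y$ exits through $\FF^i_{L\bullet}$ is the event that $\tau^{\bar Y}_i<\tau^{\bar Y}_j$ for all $j\ne i$; plugging $\tau^{\bar Y}_i$ into the Duhamel formula yields the exit location
\begin{align*}
\bar Y^j_{\tau^{\bar Y}_i}\approx L\cdot\frac{N_j+\xi_\eps^j}{L^{\lambda_j/\lambda_i}\,\eps^{\lambda_j/\lambda_i-1}\,|N_i+\xi_\eps^i|^{\lambda_j/\lambda_i}},\qquad j<i,
\end{align*}
while $\bar Y^j_{\tau^{\bar Y}_i}\to 0$ for $j>i$ (these are the coordinates that concentrate on the $\Lambda^i$-slice). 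Changing variables from $N_j$, $j<i$, to the rescaled exit coordinates $M_j:=\bar Y^j_{\tau^{\bar Y}_i}$ produces a Jacobian proportional to $L^{-\sum_{j<i}\lambda_j/\lambda_i}\eps^{\rho_i}|N_i+\xi_\eps^i|^{\sum_{j<i}\lambda_j/\lambda_i}$. Integrating the Gaussian density of $N$ against this Jacobian, evaluating at $N_j=-\xi_0^j$ (legitimate because those $N_j$ are forced to be small and $\xi_\eps\Rightarrow\xi_0$), and integrating the unconstrained coordinates $N_{i+1},\ldots,N_d$ out reproduces exactly the functions $\tilde\chi^i$ and $\chi^i_\bullet$, and thus the prefactor $\mu^i_L(A)$ in the statement.

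The hard part will be transferring this explicit asymptotic from $\bar Y$ to the genuine nonlinear process $Y$. Two issues must be addressed: (a) the nonlinear remainder $\tilde q(y)=O(|y|^2)$ perturbs the trajectory and therefore the exit time and location, and (b) because the event we are estimating has polynomially small probability $\eps^{\rho_i}$, we need a \emph{relative} error of order $o(1)$ rather than an absolute one. For (a), I would apply pathwise Gronwall-type bounds on $Y-\bar Y$ on $[0,\tau_L]$, using the $C^5$ smoothness and the fact that $Y$ stays in $\BB_L$; this yields $|Y_{\tau_L}-\bar Y_{\tau_L}|=O(\eps L^2)$ up to logarithmic factors, which after rescaling by the Jacobian above is small compared to the natural scales. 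For (b), the crucial ingredient is regularity of the density of the relevant marginals of $Y_{\tau_L}$ that is uniform in $\eps$; this is exactly what the Malliavin calculus density bounds from~\cite{exit_time} provide. With those bounds in hand, one can integrate the true law of $(N_1,\ldots,N_{i-1})$ (or rather its rescaled avatar under the change of variables above) against the continuous test function $\Ind{(M_1,\ldots,M_{i-1})\in\zeta_L(A)_{|\FF^i_{L\bullet}}}$, use the $N$-regularity hypothesis~\eqref{eq:regularity-def} to ensure the boundary of the test set is null, and conclude that the limit equals the Gaussian computation for $\bar Y$. Choosing $L_0$ small enough ensures that the quadratic remainder $\tilde q$ is uniformly dominated by the linear flow on the relevant time scales, closing the estimate.
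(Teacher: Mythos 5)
Your skeleton matches the paper's: a Freidlin--Wentzell reduction of the passage from $f^{-1}(\partial\BB_L)$ to $\partial\DD$ (the paper's Lemma~\ref{lemma:Est_Pull_Back}), an explicit Gaussian computation for the linear model producing $\chi^i_\pm$ and $\mu^i_L$, an approximation of $\zeta_L(A)$ compatible with $N$-regularity (the paper does this with inner/outer finite unions of rectangles, Lemma~\ref{lemma:existence_of_delta_set}, since Proposition~\ref{Prop:Box_Case} is proved only for rectangles), and Malliavin density bounds from \cite{exit_time}. The genuine gap is in your step (a), the transfer from the Gaussian model to the nonlinear diffusion. First, a pathwise Gronwall bound does not give $|Y_{\tau_L}-\bar Y_{\tau_L}|=O(\eps L^2)$: over the exit time scale $\tau_L\approx\lambda_i^{-1}\log(L/\eps)$ the comparison is amplified by $e^{\lambda_1\tau_L}\sim(L/\eps)^{\lambda_1/\lambda_i}$, so the accumulated error is of order $\eps^{2-\lambda_1/\lambda_i}$ (up to powers of $L$), which diverges for every $i\ge2$. (Also note that for $Y=f(X)$ with $f$ the linearizing conjugacy there is no drift remainder $\tilde q$ at all --- the drift is exactly $\mathbf{a}Y$ plus an $O(\eps^2)$ It\^o correction, see \eqref{eq:Y_SDE_before_Duhamel}; the exact linearity is what makes the Duhamel rescaling usable, and your $\tilde q(y)=O(|y|^2)$ version would be strictly harder.)

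Second, and more fundamentally, even the best available pathwise coupling cannot close the argument. After rescaling, the event $\{Y^j_{\tau_i}\in[a^j,b^j]\}$ for $j<i$ becomes $\{y^j+U^j_{\tau_i}\in \eps^{\lambda_j/\lambda_i-1}(\cdots)\}$, a set of width $\eps^{\lambda_j/\lambda_i-1}$ in the fluctuation variable; to get a $o(1)$ \emph{relative} error one needs a coupling error $\ll\eps^{\lambda_j/\lambda_i-1}$, while the error coming from $F(Y_s)-F(0)$ in the stochastic integral is of order $\eps^{1+(\lambda_j-\lambda_1)/\lambda_i}$. The former dominates the latter only when $\lambda_1<2\lambda_i$, so for general spectra no pathwise estimate suffices. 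This is exactly why the paper replaces the pathwise comparison by a comparison of \emph{laws}: the exit event is sandwiched between events $\{y+U_{T_0}\in B_\pm\}$ at the deterministic time $T_0$ with margins $\eps^{\gamma_j}$, $\gamma_j\in(\lambda_j/\lambda_i-1,\lambda_j/\lambda_i)$ (Lemma~\ref{lemma:rough_est}), and then $\Prob{y+U_{T_0}\in B_\pm}$ is compared to its Gaussian counterpart with error $o(\eps^{\rho_i+\delta})$ via the Malliavin density estimates --- which only hold on time intervals of length $\le\theta\log\eps^{-1}$ and therefore must be iterated over $N$ blocks (Lemma~\ref{lemma:iterative_est}). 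You invoke the Malliavin bounds only as a supplementary regularity input; in fact they must carry the entire nonlinear-to-Gaussian transfer, and the iteration scheme needed to apply them over the full time $T_0$ is missing from your proposal.
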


Proposition \ref{Prop:geometry_of_pull_back} and Proposition \ref{Prop:measure-theoretical_prop_of_pull-back} will be proved in Sections \ref{subsection:geometry_pullback} and \ref{subsection:error_of_pullback}, respectively.

\begin{proof} [Proof of Theorem~\ref{thm:main}]

Let $L_A=L'_A\wedge L_0$, where $L_0$ and $L_A$ are defined in the propositions above. We immediately see that if $A' \subset A$, then $L_{A'}\geq L_A$, so Remark~\ref{rem:LA-ordered} is automatically justified. 

 The idea of the proof is to use Proposition~\ref{Prop:geometry_of_pull_back} in order to
 approximate $A$ by a union of regular sets of various indices
 such that Proposition~\ref{Prop:measure-theoretical_prop_of_pull-back} can be applied to each of them. More formally, we will use induction on $\indset(A)$, starting with the case $\indset(A) = d$. 
 
 Note that, by (\ref{item:1_of_prop}) of Proposition \ref{Prop:geometry_of_pull_back}, $\indset(A) = d$ implies $\zeta_L(A)  \subset  \Int_{\partial \BB_L}(\FF^d_L)$ for all $L< L_A$. Therefore, we can apply Proposition \ref{Prop:measure-theoretical_prop_of_pull-back} to obtain 
$$\lim_{\eps \to 0} \eps^{-\rho_d}\Prob{X_\tau\in A}  =  \mu^d_L(A), \quad\text{ for all }L< L_A, $$ 
which completes the proof of the induction  basis.

For the induction step, let us assume that the desired result holds for all $A$ with $\indset(A) = k$ for $i+1 \leq k \leq d$. Let us show it is also true for $A$ with $\indset(A) = i$.  

Let us  fix $L\in(0,L_A)$ arbitrarily. Since $i<d$ now, we can define $A_0$ and $A_1$ according to part (\ref{item:2_of_prop}) of Proposition \ref{Prop:geometry_of_pull_back}. Since $A_0\subset A$, Remark \ref{rem:LA-ordered} implies  $L<L_A\leq L_{A_0}$. Then, using part~(\ref{item:b_of_prop}) of Proposition \ref{Prop:geometry_of_pull_back},  Proposition \ref{Prop:measure-theoretical_prop_of_pull-back}, and the definition of $\mu^i_L$ in \eqref{eq:def_mu_L}, we obtain
\begin{align}
    \lim_{\eps \to 0} \eps^{-\rho_i}\Prob{X_\tau\in A_0} =\mu^i_L(A_0)=\mu^i_L(A). 
\label{eq:asymptotics-for-A_0}
\end{align}
By (\ref{item:c_of_prop}) of Proposition \ref{Prop:geometry_of_pull_back},  $i(A_1)=i+1$, so by the induction hypothesis, for each $L'\leq L_{A_1}$,
\begin{align*}
    \lim_{\eps \to 0} \eps^{-\rho_{i+1}}\Prob{X_\tau\in A_1}  =  \mu^{i+1}_{L'}(A_1),
\end{align*}
which implies that $\Prob{X_\tau\in A_1} = \mathcal{O}(\eps^{\rho_{i+1}}) = \smallo{\eps^{\rho_i}}$.
Due to (\ref{item:a_of_prop}),
\begin{align*}
    |\Prob{X_\tau\in A} - \Prob{X_\tau\in A_0}| \leq \Prob{X_\tau\in A_1} = \smallo{\eps^{\rho_i}}.
\end{align*}
Combining this with~\eqref{eq:asymptotics-for-A_0}, we complete the induction step and the entire proof.
\end{proof}

To prove Theorem \ref{Thm:2}, we need the following basic result.

\begin{Lemma} \label{lemma:Hyperbolic_Pull_Back}
Let $A\subset \partial \DD$ be arbitrary. Then the following holds: 
\begin{enumerate}
\item \label{item:equiv-index} $\indset(A) = \min\big\{k\in \{1,\ldots,d\}:\overline{\zeta_L(A)}\cap \Lambda^k\neq \emptyset \big\}$ for each $L<L(O)$;
\item \label{item:equiv-Lambda-regular} if $A$ is Borel, then the following statements are equivalent:
\begin{enumerate}
    \item $A$ is $N$-regular,
    \item $\HH^{\indset(A)-1}\{\partial_L(\zeta_L(A))\cap \Lambda^{\indset(A)}\}=0$ for some $L<L(O)$,
    \item $\HH^{\indset(A)-1}\{\partial_L(\zeta_L(A))\cap \Lambda^{\indset(A)}\}=0$ for all $L<L(O)$.
\end{enumerate}
\end{enumerate}
\end{Lemma}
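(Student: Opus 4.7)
The plan is to exploit that $\zeta_L=f\circ\psi_L^{-1}$ is a homeomorphism from $\partial\DD$ onto $\partial\BB_L$ whose restriction to each $N^k$ is a bi-Lipschitz bijection onto $\Lambda^k\cap\partial\BB_L$. Both assertions of the lemma then reduce to tracking closures, relative boundaries, and $\HH^{i-1}$-nullity under this correspondence.

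First I would establish the set-theoretic dictionary
\[
\zeta_L(E\cap N^k)=\zeta_L(E)\cap\Lambda^k \qquad\text{for every }E\subset\partial\DD\text{ and every }k\in\{1,\ldots,d\}.
\]
A point $y\in\partial\DD$ lies in $N^k=M^k\cap\partial\DD$ iff its preimage $x=\psi_L^{-1}(y)$ lies on the $(S_t)$-orbit of some point of $f^{-1}(\Lambda^k)$; by the $f$-conjugacy and the invariance of $\Lambda^k$ under the linear flow $\bar S$, this is equivalent to $x\in f^{-1}(\Lambda^k)$, i.e., to $\zeta_L(y)=f(x)\in\Lambda^k$. In particular $\zeta_L(N^k)=\Lambda^k\cap\partial\BB_L$, and together with injectivity of $\zeta_L$ this gives the displayed identity.

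With the dictionary in hand, part~(1) follows quickly: since $\zeta_L$ is a homeomorphism of compact sets, $\overline{\zeta_L(A)}=\zeta_L(\overline A)$, and intersecting with $\Lambda^k$ yields $\overline{\zeta_L(A)}\cap\Lambda^k=\zeta_L(\overline A\cap N^k)$, which is nonempty iff $\overline A\cap N^k$ is nonempty; taking the minimum over $k$ reproduces $\indset(A)$, for every $L<L(O)$. For part~(2), homeomorphisms preserve relative boundaries, so $\partial_L(\zeta_L(A))=\zeta_L(\partial_{\partial\DD}A)$, and the dictionary delivers
\[
\partial_L(\zeta_L(A))\cap\Lambda^i=\zeta_L(\partial_{\partial\DD}A\cap N^i),\qquad i=\indset(A).
\]
Since bi-Lipschitz maps preserve $\HH^{i-1}$-null sets, the bi-Lipschitz regularity of $\zeta_L|_{N^i}$ transfers $\HH^{i-1}$-nullity in both directions, uniformly in $L<L(O)$; this proves (a)$\Leftrightarrow$(b)$\Leftrightarrow$(c).

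The only substantive ingredient is the bi-Lipschitz regularity of $\zeta_L|_{N^i}$ onto $\Lambda^i\cap\partial\BB_L$, which follows from the $C^5$ smoothness of $f$ and the transversality of the drift to $\partial\DD$ and, via $f$, to $\partial\BB_L$; it is already invoked in the remark following Theorem~\ref{Thm:2}. The rest is routine bookkeeping of closures, interiors, and relative boundaries under a homeomorphism.
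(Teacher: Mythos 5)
Your proposal is correct and follows essentially the same route as the paper: establish that $\zeta_L$ is a bi-Lipschitz homeomorphism (as the composition of the diffeomorphism $f$ with the Poincar\'e map $\psi_L^{-1}$ built from flows transversal to smooth sections), derive the identities $\zeta_L(\overline A\cap N^k)=\overline{\zeta_L(A)}\cap\Lambda^k$ and $\zeta_L(\partial_{\partial\DD}A\cap N^k)=\partial_L(\zeta_L(A))\cap\Lambda^k$ from the conjugacy and the invariance of $\Lambda^k$ under the linear flow, and then read off part (1) from preservation of closures and part (2) from preservation of $\HH^{i-1}$-null sets under bi-Lipschitz maps. No substantive differences from the paper's argument.
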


\begin{proof} 
First,  $\zeta_L$ is a  bi-Lipschitz homeomorphism, since it is a composition of a diffeomorphism $f$ and the Poincar\'e map $\psi_L^{-1}$ constructed in~\eqref{eq:psi_L} from smooth flows transversal to locally smooth sections. Secondly, due to \eqref{eq:def_M^k}, the definition $N^k=M^k\cap \partial \DD$, and the invariance of $\Lambda^k$ under the linear flow $\bar S$, one can see that
\begin{align}\label{eq:zeta_L_map}
     \zeta_L(\overline{A}\cap N^k)=\overline{\zeta_L(A)}\cap \Lambda^k \quad\text{and}\quad \zeta_L(\partial_{\partial \DD}A\cap N^k)= \partial_L(\zeta_L(A))\cap \Lambda^k,
\end{align}
which implies  both parts \eqref{item:equiv-index} and \eqref{item:equiv-Lambda-regular} straightforwardly.
\end{proof}

\begin{proof}[Proof of Theorem \ref{Thm:2}]
We need to prove that 
\begin{equation}
\nu_A^\eps (B)\to  \frac{\mu^{i}_L(B \cap A)}{\mu^{i}_L( A)},\quad \eps\to 0,
\label{eq:conv-on-continuity-sets}
\end{equation}
for every continuity set $B$ of the measure $\mu_L^i(\,\cdot\, \cap  A)$ or, equivalently, by the definition~\eqref{eq:def_mu_L}, of  $\HH^{i-1}\big(\zeta_{L}(\,\cdot\, \cap A)\cap\FF^{i}_{L\pm}\cap \Lambda^{i} \big)$ which is equal to $\HH^{i-1}\big(\zeta_{L}(\,\cdot\, \cap A)\cap \Lambda^{i} \big)$ due to $L<L_A$. 
Using the inclusion $\partial_\DD(B\cap A)\subset (\pDD B \cap \overline A)\cup (\overline B\cap \pDD A)$,  
the  $N$-regularity of~$A$ (see~\eqref{eq:regularity-def}) and \eqref{item:equiv-Lambda-regular} of Lemma \ref{lemma:Hyperbolic_Pull_Back}, we conclude that the continuity property of~$B$ implies that
of $B\cap A$. Combining this with the fact that $\nu_A^\eps(B\cap A^c)=0$ for all~$\eps$, we obtain that it is sufficient to 
check~\eqref{eq:conv-on-continuity-sets} for Borel subsets $B$ of $A$ with continuity property. For such a set $B$,
either  $\indset(B)=\indset(A)$, or $\indset(B)>\indset(A)$. In the first case, 
writing
\[
\pDD B = \pDD(B\cap A)\subset (\pDD B\cap \overline A) \cup (\overline B \cap \pDD A),
\]
using the continuity of $B$, part~\eqref{item:equiv-Lambda-regular} of Lemma~\ref{lemma:Hyperbolic_Pull_Back}, and the $N$-regularity of $A$, we conclude that~$B$ is also $N$-regular, so \eqref{eq:conv-on-continuity-sets} follows from Theorem~\ref{thm:main}.   In the second case,  part~\eqref{item:equiv-index} of Lemma \ref{lemma:Hyperbolic_Pull_Back} 
implies 
$\overline{\zeta_L(B)}\cap \Lambda^{i} =\emptyset$. Therefore, $r=\dist(\zeta_L(B),\Lambda^i)>0$, where
\begin{equation}
\label{eq:dist-def}
\dist(C,D)=\inf\{|x-y|:\ x\in C,\ y\in D\}\wedge 1,\quad C,D\subset \R^d.
\end{equation}
Since $\HH^{i-1}\big(\zeta_{L}(B\cap A)\cap\FF^{i}_{L\pm}\cap \Lambda^{i} \big)=0$, 
it suffices to prove $\nu^\eps_A(B)\to0$
 to ensure~\eqref{eq:conv-on-continuity-sets}.  
Let us define  $B_L^i(r)=\{x\in\BB_L: \dist(\{x\},\Lambda^i)\ge r\}$. 
Since $\zeta_L(B)\subset B_L^i(r)$, and $\zeta^{-1}_L(B_L^i(r))$ is an $N$-regular
set of index $i+1$ due to parts \eqref{item:equiv-index} and \eqref{item:equiv-Lambda-regular} of Lemma \ref{lemma:Hyperbolic_Pull_Back}, 
we can apply Theorem~\ref{thm:main} to $\zeta^{-1}_L(B_L^i(r))$ and 
conclude that $\Prob{X_\tau \in B}=\smallo{\eps^{\rho_{\indset(A)}}}=\smallo{\Prob{X_\tau \in  A}}$,
so~\eqref{eq:conv-on-continuity-sets} holds in this case as well. The proof is completed.
\end{proof}

\section{Exit from a box}
Recall the definitions of $\BB_L=[-L,L]^d$ and $\FF^i_{L\pm}$ in Section \ref{section:setting}. Let
\begin{align*}\begin{split}
    \FF^i_{L\pm,\delta} & = \{x \in \FF^i_{L\pm}: |x^j| \leq L - \delta \text{ for $j \neq i$}  \}, \\
    \FF^i_{L,\delta} &=\FF^i_{L+,\delta}\cup \FF^i_{L-,\delta}.
\end{split}
\end{align*}
Set 
\begin{align}\label{eq:def_tau_L}
    \tau_L = \inf \{t>0: X_t \notin f^{-1}(\BB_L) \}, 
\end{align}
where $f$ is the linearizing conjugacy. The main result of this section  gives the exit probability asymptotics for sets whose images under $f$ are rectangles:

\begin{Prop} \label{Prop:Box_Case}
There exists $L_0 >0$ such that for all positive $L\leq L_0$, we have 
\begin{align*}\lim_{\eps\to 0}\eps^{-\rho_i}\Prob{X_{\tau_L}\in f^{-1}(A )}  = & L^{-\sum_{j<i}\fl}\sum_{\bullet \in\{ +,-\}}\EE\chi^i_\bullet(\xi_0)  \HH^{i-1}(A \cap \FF^i_{L\bullet}\cap \Lambda^i )
\end{align*}
for all $i\in\{1,2,\dots,d\}$, and for all sets $A$ with the following properties:
\begin{itemize}
    \item $A$ is a product of intervals which can be open, closed, or half-open.
    \item $\overline{A}=[a^1,b^1]\times \ldots \times [a^{i-1},b^{i-1}]\times \{\pm L\}\times[a^{i+1},b^{i+1}]\times\ldots\times[a^d,b^d]$, where $[a^j,b^j] \subset (-L,L)$ for all $j\neq i$ and $a^j,b^j\ne 0$ for $j>i$.
\end{itemize}
\end{Prop}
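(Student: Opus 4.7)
My first step is to change to the linearizing coordinates by setting $Y_t=f(X_t)$, so that $\{X_{\tau_L}\in f^{-1}(A)\}=\{Y_{\tau_L}\in A\}$ and, by It\^o's formula,
\[
dY_t=\mathbf{a}Y_t\,dt+\eps\tilde\sigma(Y_t)\,dW_t+\eps^2 r(Y_t)\,dt,
\]
where $\tilde\sigma(y)=Df(f^{-1}(y))\sigma(f^{-1}(y))$, $\tilde\sigma(0)=\sigma(0)$ (since $Df(0)=I$), and $r$ collects the second-order It\^o terms. Duhamel's formula gives $Y^k_t=\eps e^{\lambda_k t}U^k_t$ with
\[
U^k_t=\xi_\eps^k+\int_0^t e^{-\lambda_k s}\tilde\sigma^k(Y_s)\,dW_s+\mathcal{O}(\eps),
\]
and, as long as $Y$ stays close to $0$ for times of logarithmic order in $\eps^{-1}$, one has $U^k_t\to U^k_\infty:=\xi_0^k+N_k$, where $N=(N_k)_{k=1}^d$ is the centered Gaussian vector with covariance $\Czero$ from~\eqref{eq:def_C_0_Czero}.

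The second step is to analyze the atypical exit geometrically. Exit through $\FF^i_{L\pm}$ corresponds to $\tau_L=\tau_i\approx \lambda_i^{-1}\log(L/(\eps|U^i_\infty|))$ with $\mathrm{sgn}(U^i_\infty)=\pm 1$, together with $|Y^j_{\tau_L}|<L$ for $j<i$ (the $j>i$ directions being automatically slower). Inserting $\tau_i$ into Duhamel for $Y^j$, $j<i$, yields
\[
Y^j_{\tau_i}\approx \eps^{1-\lj/\li}L^{\lj/\li}\mathrm{sgn}(U^i_\infty)^{\lj/\li}\frac{U^j_\infty}{|U^i_\infty|^{\lj/\li}},
\]
so the constraint $Y^j_{\tau_L}\in[a^j,b^j]$ translates to $U^j_\infty$ lying in an interval of width $(b^j-a^j)\eps^{\lj/\li-1}L^{-\lj/\li}|U^i_\infty|^{\lj/\li}$ about $0$. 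For $j>i$, on the other hand, $Y^j_{\tau_L}\to 0$ in probability, so $\mathbf{1}\{Y^j_{\tau_L}\in[a^j,b^j]\}\to \mathbf{1}\{0\in[a^j,b^j]\}$, which is well defined thanks to $a^j,b^j\ne 0$. Taking the product of the small-window probabilities over $j<i$ produces the scaling $\eps^{\rho_i}$.

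The main technical challenge is to make these heuristics rigorous uniformly in the Gaussian configuration of $N$. For this I would invoke the Malliavin density estimates from~\cite{exit_time}, which provide smooth joint densities for $(U^k_{\tau_L})_{k\ne i}$ (after localizing on $Y^i_{\tau_L}=\pm L$) with bounds uniform on compact sets and convergence to the corresponding Gaussian density. These allow one to replace indicators of the small-window events by their Gaussian counterparts up to $\smallo{\eps^{\rho_i}}$ errors. Freidlin--Wentzell-type estimates discard exponentially unlikely trajectories, and the remaining errors -- the $\eps^2 r$ It\^o correction, the replacement of $\tilde\sigma(Y_s)$ by $\sigma(0)$, and truncating the $U^k_t$ integrals at $t=\tau_L$ rather than $\infty$ -- are negligible once $L$ is small enough for the linear part $\mathbf{a}$ to dominate on $\BB_L$. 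The hardest piece is arranging these error estimates to be uniform across the ``small-window'' configurations of $N$, which is exactly what the Malliavin density bounds from~\cite{exit_time} are designed to deliver.

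With the Gaussian approximation in place, the limit is obtained by direct integration. Evaluating the Gaussian density $g(x)=\tfrac{1}{\sqrt{(2\pi)^d\det\Czero}}\exp(-\tfrac12 x^\intercal \Czero^{-1}x)$ at $x^j=-\xi_0^j$ for $j<i$, integrating freely over $x^{i+1},\ldots,x^d\in\R$, over $x^i\in[-\xi_0^i,\infty)$ for the $+L$ face (respectively over $(-\infty,-\xi_0^i]$ for the $-L$ face), and absorbing the factor $|x^i+\xi_0^i|^{\sum_{j<i}\lj/\li}$ coming from the product of small-window widths, one recognises exactly $\chi^i_\pm(\xi_0)$. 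The $L^{-\sum_{j<i}\lj/\li}$ prefactor arises from the same window widths, and $\prod_{j<i}(b^j-a^j)\cdot \mathbf{1}\{0\in(a^k,b^k)\text{ for all }k>i\}$ equals $\HH^{i-1}(A\cap \FF^i_{L\pm}\cap \Lambda^i)$. Averaging over $\xi_\eps\to \xi_0$ using the exponential tail~\eqref{eq:xi_exponential_tail} and summing over the two signs $\bullet\in\{+,-\}$ yields the claimed asymptotic formula.
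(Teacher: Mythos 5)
Your plan follows the paper's own proof essentially step for step: linearize via $f$, apply Duhamel to write $Y^k_t=\eps e^{\lambda_k t}(y^k+U^k_t)$, translate exit through $\FF^i_{L\pm}$ into small-window events of width $\propto\eps^{\lambda_j/\lambda_i-1}|U^i|^{\lambda_j/\lambda_i}$ for $j<i$ (with $Y^j_{\tau_L}\to0$ for $j>i$), invoke the Malliavin density bounds of \cite{exit_time} to replace $U$ by the Gaussian $\Zc$ with covariance $\Czero$, integrate the Gaussian density to recognize $\chi^i_\pm$, and finally average over $\xi_\eps\to\xi_0$ using \eqref{eq:xi_exponential_tail}. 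The one place your description of the machinery is looser than what is actually required is that the density estimates of \cite{exit_time} hold only at deterministic times $T(\eps)\le\theta\log\eps^{-1}$, so the paper must first sandwich the event at the random time $\tau_i$ between events at a deterministic time $T_0$ (via exponential martingale bounds) and then iterate the density comparison over $N$ subintervals of $[0,T_0]$ using the Markov property --- but this falls squarely under the uniformity issues you explicitly flag as the main technical challenge.
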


\subsection{Derivation of Proposition~\ref{Prop:Box_Case} from auxiliary results} From now on, we use standard summation convention over matching upper and lower indices.
Let $Y_t = f(X_t)$, where $f$ is the linearizing conjugacy. Using It\^o's formula, we obtain 
\begin{align} \label{eq:Y_SDE_before_Duhamel}
\begin{split}
        dY^i_t & = \mathbf{a}^i_j Y^j_tdt + \eps \partial_k f^i(f^{-1}(Y_t))\sigma^k_j(f^{-1}(Y_t))dW^j_t\\
        &+ \frac{\eps^2}{2}\sum_{j,k=1}^d\partial_{j,k}^2f^i(f^{-1}(Y_t))\langle \sigma^j(f^{-1}(Y_t)),\sigma^k(f^{-1}(Y_t)) \rangle dt \\
    & = \lambda^i Y^i_t dt + \eps F^i_j(Y_t) dW^j_t + \eps^2G^i(Y_t)dt,
\end{split}
\end{align}
where 
\begin{itemize}
    \item $\lambda^i = \lambda_i$ to avoid summation in $i$;
    \item $F$ and $G$ are $C^3$ (since $f$ is  $C^5$ and $\sigma$ is $C^3$);
    \item since $f(0)=0$ and $Df(0)=I$, we have $F(0)= \sigma(0)$. \end{itemize}

\medskip
Since $F(0)=\sigma(0)$ is $d\times n$ with full rank and $F$ is continuous, we can find $L_0>0$ small so that there is $c_0>0$ such that $\min_{|u|=1,u\in\R^d}|u^\intercal F(x)|^2\geq c_0$ for all $x\in [-L_0,L_0]^d$. We shrink $L_0$ further, if necessary, to ensure $L_0\leq L(O)$ as in \eqref{eq:def_L(O)}. Since we will only care about exiting from a subset of $[-L_0,L_0]^d$, we modify $F, G$ outside $[-L_0,L_0]^d$ so that
\begin{align}\label{eq:modified_F}
\begin{split}
    \min_{|u|=1,u\in\R^d}|u^\intercal F(x)|^2\geq c_0, \text{ for all }x\in \R^d; \\
    F,G \text{ and their derivatives are bounded}.
    \end{split}
\end{align}
 
With this $L_0$ chosen, we will consider the following for the rest of this section, applying Duhamel's principle to (\ref{eq:Y_SDE_before_Duhamel}) and setting $Y_0 = \eps y$, 
\begin{align}\label{eq:Y_after_Duhamel}
\begin{split}
    Y^j_t&=\eps e^{\lj t} y^j+\eps e^{\lj t}\Big(\int_0^t e^{-\lj s}F^j_l(Y_s)dW^l_s+\eps\int^t_0e^{-\lj s}G^j(Y_s)ds \Big)\\
    & = \eps e^{\lj t}(y^j+ M^j_t + \eps V^j_t)=\eps e^{\lj t}(y^j+ U^j_t ),     \end{split}
\end{align}
where $F,G$ are modified to ensure \eqref{eq:modified_F}. 
We emphasize that $M_t$, $V_t$ and $U_t$ all depend on $y$ and $\eps$. We define $\PP^{\eps y}= \PP\{\,\cdot\, |Y_0=\eps y\}$.

\smallskip

Let $C_f$ be the Lipschitz constant of $f$ and $c_f=C_f^{-1}$.
Since $f(0)=0$, we have 
$|\eps^{-1}f(\eps x)|\leq C_f|x|$ for all $x$ and $\eps$.
  In view of \eqref{eq:xi_exponential_tail}, we choose $\kappa>0$ large so that for 
\begin{align}\label{eq:def_K(eps)}
    K(\eps)= (\log\eps^{-1})^{\kappa}
\end{align} we have 
\begin{align}\label{eq:xi>K(eps)_is_eps^rho_d}
    \Prob{|\xi_\eps|>c_fK(\eps)}\leq \eps^{\rho_d+\delta} \text{, for all }\eps \in (0,1]\text{, for some }\delta >0.
\end{align}
By our definition of $c_f$, we have
\begin{align*}
    \text{if }|x|\leq c_f K(\eps), \quad\text{ then }|\eps^{-1}f(\eps x)|\leq K(\eps).
\end{align*}
\begin{Rem}\label{remark:eta_and_kappa} Later, when needed, $\kappa$ in \eqref{eq:def_K(eps)} will be adjusted to be even larger. This will not affect our results.
\end{Rem}

According to~\eqref{eq:def_tau_L},  $\tau_L = \inf\{t>0: Y_t\notin \BB_L \}$. To prove Proposition \ref{Prop:Box_Case}, we first obtain asymptotics for $Y_t$ exiting rectangular sets uniformly in $Y_0 = \eps y$ with $|y|\leq K(\eps)$. Recall that $\rho_i$ is given in \eqref{eq:rho_i}.

\begin{Prop}\label{prop:Rectangle_set_on_box} 
Consider $Y_t$ defined by \eqref{eq:Y_after_Duhamel}. If $L< L_0$, $i=1,\ldots,d$, 
and $A$ is a rectangle described in Proposition \ref{Prop:Box_Case}, then
\begin{align}\label{eq:lim_sup_K(eps)}
    \lim_{\eps\to 0}\sup_{|y|\leq  K(\eps)}\bigg|&\eps^{-\rho_i}\Probx{\eps y}{Y_{\tau_L}\in A}- \chi^i_\pm\big(\eps^{-1}f^{-1}(\eps y)\big) c_A \bigg|= 0,
\end{align}
where
\begin{align}\label{eq:def_c_A}
    c_A=L^{-\sum_{j<i}\fl}\prod_{j<i}(b^j-a^j)\prod_{j>i}\Ind{0\in(a^j,b^j)}.
\end{align}
\end{Prop}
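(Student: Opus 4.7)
The plan is to express the event $\{Y_{\tau_L}\in A\}$ as a thin-slab event for the random vector $U_{\tau_L}=(U^j_{\tau_L})_{j=1}^d$ from \eqref{eq:Y_after_Duhamel} and then evaluate its probability by comparison with an explicit Gaussian density with covariance $\Czero$. Suppose $\overline{A}$ lies on $\FF^i_{L+}$ (the case $\FF^i_{L-}$ is symmetric and produces $\chi^i_-$). On the event of exit through $\FF^i_{L+}$, the identity $Y^i_{\tau_L}=L$ yields
\begin{align*}
\tau_L=\lambda_i^{-1}\log\tfrac{L}{\eps(y^i+U^i_{\tau_L})},\qquad y^i+U^i_{\tau_L}>0,
\end{align*}
and substituting back into \eqref{eq:Y_after_Duhamel} gives
\begin{align*}
Y^j_{\tau_L}=\eps^{1-\lambda_j/\lambda_i}L^{\lambda_j/\lambda_i}(y^i+U^i_{\tau_L})^{-\lambda_j/\lambda_i}(y^j+U^j_{\tau_L}),\qquad j\ne i.
\end{align*}
Consequently, the exit event imposes that for $j<i$ (where $\lambda_j/\lambda_i>1$), the coordinate $y^j+U^j_{\tau_L}$ must lie in a narrow window of width of order $\eps^{\lambda_j/\lambda_i-1}$ around $0$; and for $j>i$ (where $\lambda_j/\lambda_i<1$), $Y^j_{\tau_L}\to 0$, so the condition $Y^j_{\tau_L}\in[a^j,b^j]$ contributes $\Ind{0\in(a^j,b^j)}$ in the limit.

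\textbf{Gaussian approximation and density bound.} The vector $U_{\tau_L}$ is close to a centered Gaussian $N$ with covariance $\Czero$: the leading term $M^j_{\tau_L}=\int_0^{\tau_L}e^{-\lambda_j s}F^j_l(Y_s)dW^l_s$ is compared to $\int_0^\infty e^{-\lambda_j s}F^j_l(0)dW^l_s$, with error from (a) replacing $F(Y_s)$ by $F(0)=\sigma(0)$, controlled by the smallness of $Y_s$ for most $s<\tau_L$ together with the decay factor $e^{-\lambda_j s}$, and (b) the truncation at $\tau_L\sim\lambda_i^{-1}\log\eps^{-1}$. The key quantitative input is the Malliavin-calculus density estimate from \cite{exit_time}: it provides a pointwise bound on the joint density of the ``fast'' coordinates $(U^1_{\tau_L},\ldots,U^i_{\tau_L})$, and its convergence to the marginal of the Gaussian $g$ built from $\Czero$, uniform over the relevant slab region.

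\textbf{Thin-slab integration.} Combining the previous steps and applying Fubini, each $u^j$-integration for $j<i$ contributes a factor $\eps^{\lambda_j/\lambda_i-1}L^{-\lambda_j/\lambda_i}(y^i+u^i)^{\lambda_j/\lambda_i}(b^j-a^j)$ with the density essentially fixed at $u^j=-y^j$. Collecting factors and integrating over the remaining coordinates yields
\begin{align*}
\Probx{\eps y}{Y_{\tau_L}\in A}&\approx \eps^{\sum_{j<i}(\lambda_j/\lambda_i-1)}L^{-\sum_{j<i}\lambda_j/\lambda_i}\prod_{j<i}(b^j-a^j)\prod_{j>i}\Ind{0\in(a^j,b^j)}\\
&\quad\times\int_{-y^i}^\infty(y^i+u^i)^{\sum_{j<i}\lambda_j/\lambda_i}\int_{\R^{d-i}}g(u)\big|_{u^j=-y^j,\,j<i}du^{i+1}\cdots du^d\,du^i.
\end{align*}
By \eqref{eq:rho_i} and \eqref{eq:def_c_A}, and recognizing the last integral as $\chi^i_+(y)$, this matches $\eps^{\rho_i}c_A\chi^i_+(y)$.

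\textbf{Uniformity and the $y\to\xi_\eps$ substitution.} Since $f(0)=0$ and $Df(0)=I$, we have $|\eps^{-1}f^{-1}(\eps y)-y|=O(\eps|y|^2)=O(\eps K(\eps)^2)=o(1)$ uniformly on $\{|y|\le K(\eps)\}$. Combined with the Lipschitz continuity of $\chi^i_\pm$ on bounded sets and its rapid Gaussian decay for large $|y|$, this allows one to replace $\chi^i_+(y)$ by $\chi^i_+(\eps^{-1}f^{-1}(\eps y))$ in the asymptotic. The main obstacle is establishing the Malliavin-calculus density estimate for $U_{\tau_L}$ with sufficient uniformity in $y$ over the polylogarithmically large ball $\{|y|\le K(\eps)\}$ and enough regularity in the fast coordinates $(u^1,\ldots,u^{i-1})$ to justify the thin-slab computation rigorously; this is precisely where the machinery imported from \cite{exit_time} enters. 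Auxiliary technical points include controlling the $\eps V_{\tau_L}$ correction (negligible thanks to the boundedness from \eqref{eq:modified_F}) and verifying that the first-exit constraint in the non-$i$ coordinates is automatic once $Y^j_{\tau_L}\in(a^j,b^j)$ holds, using the fact that the exponential growth rates make $|Y^j_t|$ essentially monotone in the relevant regime.
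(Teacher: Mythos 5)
Your overall strategy is the same as the paper's: rewrite $\{Y_{\tau_L}\in A\}$ as a thin-slab event for $y+U$, compare with the centered Gaussian of covariance $\Czero$, and integrate out the fast coordinates to produce the factor $\eps^{\rho_i}c_A\chi^i_\pm$. The slab widths, the origin of the indicator $\Ind{0\in(a^j,b^j)}$ for $j>i$, and the final Gaussian computation are all correctly identified. However, there are two genuine gaps at exactly the point you defer to ``the machinery imported from \cite{exit_time}.''

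First, the density estimate of \cite{exit_time} (Lemma~\ref{Lemma:density_est} in the paper) is a statement about $U_{T(\eps)}$ at a \emph{deterministic} time $T(\eps)$; it says nothing about the law of $U$ at the stopping time $\tau_L$ (or $\tau_i$). You write as if it ``provides a pointwise bound on the joint density of $(U^1_{\tau_L},\ldots,U^i_{\tau_L})$,'' but no such bound is available. One must first replace the random time by a deterministic time $T_0\approx\lambda_i^{-1}\log\frac{L}{\eps(\log\eps^{-1})^{\kappa+1}}$, and control both $\Prob{\tau_i<T_0}$ and the increments $U^j_{\tau_i\vee T_0}-U^j_{T_0}$. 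This is Lemma~\ref{lemma:rough_est}: it requires the exponential martingale inequality together with the bound $\langle M^j\rangle_{\tau_i\vee T_0}-\langle M^j\rangle_{T_0}\le C\eps^{2\lambda_j/\lambda_i-\delta}$, and a careful choice of exponents $\gamma_j$ with $0\vee(\lambda_j/\lambda_i-1)<\gamma_j<\lambda_j/\lambda_i-\delta/2$ so that the inner and outer approximating slabs $B_\mp$, fattened or thinned by $\eps^{\gamma_j}$ in each coordinate, still produce matching leading-order constants. Your sketch does not contain this sandwich construction, and without it the substitution $\tau_L\to$ deterministic time is unjustified.

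Second, even after passing to $T_0$, the imported estimate is only valid for $T(\eps)\le\theta\log\eps^{-1}$ for some constant $\theta$ that may well be smaller than $1/\lambda_i$, so it cannot be applied in one shot over $[0,T_0]$. The paper bridges this with an iteration (Lemma~\ref{lemma:iterative_est}): split $[0,T_0]$ into $N$ pieces of admissible length, apply the density estimate on each piece, and propagate the comparison with the Gaussian $Z$ through the Markov property, keeping track of the magnification $e^{\lambda t_{k-1}}$ of the starting point at each stage (which is why the statement must be proved uniformly over $|y|\le\eps^{\upsilon-1}$ and carries the extra shift parameter $w$). This bootstrapping is the technical heart of the Gaussian approximation step and is absent from your outline. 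Your remaining points --- dropping the constraint from the non-$i$ coordinates via a moment/strong-Markov argument, and the replacement of $y$ by $\eps^{-1}f^{-1}(\eps y)$ using $Df(0)=I$ --- match the paper's Lemma~\ref{lemma:drop_tau_i} and estimate \eqref{eq:y^i-z^i_eps_est} and are fine as sketches.
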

Here and throughout the paper, the product over an empty set is understood to equal~$1$.

\begin{proof}[Derivation of Proposition \ref{Prop:Box_Case} from Proposition~\ref{prop:Rectangle_set_on_box}]
Consider \eqref{eq:SDE_X} with $X_0=\eps \xi_\eps$ described in \eqref{eq:xi_exponential_tail} and observe that, by \eqref{eq:xi>K(eps)_is_eps^rho_d} and the above proposition,
\begin{align}
\label{eq:provingProp:Box_Case-1}
    \eps^{-\rho_i}\Prob{X_{\tau_L} \in f^{-1}(A)} &= \EE \eps^{-\rho_i}\Probx{f(\eps\xi_\eps)}{Y_{\tau_L} \in A}\Ind{|\xi_\eps|\leq c_fK(\eps)} + o(1)\\ \notag
    & = c_A\EE \chi^i_\pm(\xi_\eps)\Ind{|\xi_\eps|\leq c_fK(\eps)}+o(1).
\end{align}
There is $C>0$ such that
\begin{align}\label{eq:polynomial_bound_of_chi}
    |\chi^i_\pm(y)|\leq C(1+|y|^{\sum_{j<i}\fl}),\quad x\in\R^d.
\end{align}
Due to the fast decay of the tail of $\xi_\eps$ imposed by \eqref{eq:xi_exponential_tail}, all positive moments of $|\chi^i_\pm(\xi_\eps)|$ are bounded uniformly in $\eps$. Therefore, due to  \eqref{eq:xi>K(eps)_is_eps^rho_d} and H\"older's inequality, we have $\EE\chi^i_\pm(\xi_\eps)\Ind{|\xi_\eps|>c_fK(\eps)} = o(1)$, which implies
\begin{align}
\label{eq:provingProp:Box_Case-2}
    c_A\EE \chi^i_\pm(\xi_\eps)\Ind{|\xi_\eps|\leq c_fK(\eps)}= c_A\EE \chi^i_\pm(\xi_\eps)+o(1).
\end{align}
This, along with the uniform tail bound on $\xi_\eps$ in~\eqref{eq:xi_exponential_tail},
 the polynomial bound on $\chi_{\pm}$ in \eqref{eq:polynomial_bound_of_chi}, 
and continuity of $\chi_\pm$,
implies
\begin{align}
\label{eq:provingProp:Box_Case-3}
    \lim_{\eps\to 0}c_A\EE \chi^i_\pm(\xi_\eps) = c_A\EE\chi^i_\pm(\xi_0).
\end{align}
Combining~\eqref{eq:provingProp:Box_Case-1}, \eqref{eq:provingProp:Box_Case-2}, and \eqref{eq:provingProp:Box_Case-3}, we obtain
\begin{align*}
    \lim_{\eps\to 0} \eps^{-\rho_i}\Prob{X_{\tau_L} \in f^{-1}(A)} = L^{-\sum_{j<i}\fl}\EE\chi^i_\pm(\xi_0)\HH^{i-1}\{A\cap \Lambda^i\},
\end{align*}
completing the proof.
\end{proof}

\subsection{Proof of Proposition \ref{prop:Rectangle_set_on_box}}
Let us fix any $L<L_0$ and $i\in\{1,2,\dots,d\}$. 
First, we remark that it suffices to consider $A$ satisfying
\begin{align}\label{eq:rectangle_main_type}
    0\in(a^i,\ b^i),\quad \forall j >i.
\end{align}
In fact, if $A$ does not satisfy \eqref{eq:rectangle_main_type}, then we can find two rectangles $A'$ and $A''$ such that
\begin{enumerate}[(i)]
    \item $\pi^{\leq i}(A)=\pi^{\leq i}(A')=\pi^{\leq i}(A'')$ where $\pi^{\leq i}$ is the projection onto the first $i$ coordinates;
    \item $A'\subset A''$ and $A\subset A''\setminus A'$;
    \item $A'$ and $A''$ satisfy \eqref{eq:rectangle_main_type}.
\end{enumerate}
By \eqref{eq:def_c_A}, we have $c_A=0$ since $A$ does not satisfy \eqref{eq:rectangle_main_type},  and $c_{A'}=c_{A''}$ due to (i). These together with (ii) imply (we use $\eta_\pm =\chi^i_\pm(\eps^{-1}f^{-1}(\eps y))$):
\begin{align*}
    &\big|\eps^{-\rho_i}\Probx{\eps y}{Y_{\tau_L}\in A}-\eta_\pm c_A\big|=\big|\eps^{-\rho_i}\Probx{\eps y}{Y_{\tau_L}\in A}\big|\\
    &\leq \big|\eps^{-\rho_i}\Probx{\eps y}{Y_{\tau_L}\in A'}-\eta_\pm c_{A'}\big|+\big|\eps^{-\rho_i}\Probx{\eps y}{Y_{\tau_L}\in A''}-\eta_\pm c_{A''}\big|.
\end{align*}
Finally, (iii) allows us to apply \eqref{eq:lim_sup_K(eps)} to $A'$ and $A''$, and thus \eqref{eq:lim_sup_K(eps)} holds for $A$.

To avoid heavy notation, we also assume that $A$ is closed. It can be readily checked that all our arguments are still valid if $A$ is not closed.

Recall $\tau_L$ given in \eqref{eq:def_tau_L}. Since $L$ is fixed, for brevity, we write $\tau=\tau_L$ for the rest of the section.  Here, we only study the case 
where $A\subset\FF^i_{L+}$, which corresponds to
$Y^i_\tau=L$. The case where $A\subset\FF^i_{L-}$ (corresponding to $Y^i_\tau=-L$) can be considered in the same way.

We will need the following two statements.

\begin{Lemma} \label{lemma:rough_est} Assume $0\in (a^j,b^j)$ for all $j>i$. Let 
\begin{align}\label{eq:def_T_0}
    T_0=T_0(\eps)= \frac{1}{\li}\log\frac{L}{\eps(\log\eps^{-1})^{\kappa+1}}.
\end{align}
There are  $\gamma_j$, $j=1,\ldots,d$, satisfying
 \begin{align} \label{eq:gamma_j_condition}
     0\vee \Big(\fl-1\Big)<\gj <\fl  \quad j=1,2,\dots,d,
 \end{align}
such that
$$ \Prob{y+U_{T_0}\in B_-} - \smallo{\eps^{\rho_i}} \leq \Prob{Y^j_\tau\in [a^j,b^j],\forallj; Y^i_\tau = L} \leq \Prob{y+U_{T_0}\in B_+} + \smallo{\eps^{\rho_i}}$$ holds uniformly in $|y| \leq K(\eps)$, where
\begin{align} \label{eq:defOfB_pm}
\begin{split}
B_\pm &=\cup_{x^i \in I_{\pm}}\big(B_{\pm,<i}^{(x^i)} \times \{x^i\} \times B^{(x^i)}_{\pm,>i}\big) \\&= \cup_{x^i \in I_{\pm}}\big((J_{\pm,1}^{(x^i)}\times...\times J_{\pm,i-1}^{(x^i)})\times \{x^i\}\times (J_{\pm,i+1}^{(x^i)}\times ... \times J_{\pm,d}^{(x^i)})\big)\\
\end{split}
\end{align} 
with $I_\pm = \big(\mp\eps^{\gi}, ( \log\eps^{-1})^{\kappa + 1}\pm\eps^{\gi}\big]$, 
and for $j\neq i$
\begin{align*}
    J_{\pm,j}^{(x^i)} &= \Big[a^jL^{-\fl}\eps^{\fl - 1}(|x^i|\pm \eps^{\gi})^\fl \mp \eps^{\gj},b^jL^{-\fl}\eps^{\fl - 1}(|x^i|\pm\eps^{\gi})^\fl\pm\eps^{\gj}\Big].
\end{align*}
\end{Lemma}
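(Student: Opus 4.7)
The proof will pivot on the Duhamel identity $Y^j_t=\eps e^{\lambda_j t}(y^j+U^j_t)$ from \eqref{eq:Y_after_Duhamel}, together with the fact that $T_0$ has been chosen so that $\eps e^{\lambda_i T_0}=L/(\log\eps^{-1})^{\kappa+1}$, i.e., $T_0$ is just slightly before the typical exit time. On the event $\{Y^i_\tau=L\}$, solving $\eps e^{\lambda_i\tau}(y^i+U^i_\tau)=L$ for $\tau$ and substituting the result into the formula for $Y^j_\tau$ with $j\neq i$ produces the two exact relations
\begin{equation*}
\tau-T_0=\tfrac{1}{\lambda_i}\log\tfrac{(\log\eps^{-1})^{\kappa+1}}{y^i+U^i_\tau},\qquad Y^j_\tau=L^{\lambda_j/\lambda_i}\eps^{1-\lambda_j/\lambda_i}(y^i+U^i_\tau)^{-\lambda_j/\lambda_i}(y^j+U^j_\tau).
\end{equation*}
The interval $J^{(x^i)}_{\pm,j}$ in the statement is precisely the set of values of $y^j+U^j_{T_0}$ for which $Y^j_\tau\in[a^j,b^j]$ holds under the substitutions $U_\tau\rightsquigarrow U_{T_0}$ and $y^i+U^i_{T_0}\rightsquigarrow x^i$, with the $\pm\eps^{\gi}$ and $\pm\eps^{\gj}$ buffers absorbing the error introduced by these replacements.

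There will be two technical ingredients, each needing to hold off an event of probability $\smallo{\eps^{\rho_i}}$ uniformly in $|y|\le K(\eps)$: \emph{(i) Localization of the exit time:} for values $y^i+U^i_{T_0}\in I_\pm$, the exit through $\FF^i_{L\pm}$ occurs within a window $[T_0-R,T_0+R]$ with $R=O(\log\eps^{-1})$, and no premature exit through a different face occurs during $[0,\tau]$. \emph{(ii) Temporal continuity of $U^j$:} $\sup_{t\in[T_0-R,T_0+R]}|U^j_t-U^j_{T_0}|\le\tfrac12\eps^{\gj}$ for every $j$. Estimate (ii) will follow from an It\^o-isometry/Doob-inequality argument exploiting the boundedness of $F$ and $G$ coming from \eqref{eq:modified_F}: the resulting Gaussian tail at scale $\eps^{-2\gj}/R$ is superpolynomially small because $\gj>0$. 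The buffer conditions $\gj<\lambda_j/\lambda_i$, and $\gj>\lambda_j/\lambda_i-1$ for $j<i$, in \eqref{eq:gamma_j_condition} are precisely what is needed so that $\eps^{\gj}$ is large enough to absorb the approximation errors yet remains smaller than the width of $J^{(x^i)}_{\pm,j}$ at the correct scale, ensuring that $B_-$ and $B_+$ agree at leading order. Once (i) and (ii) are in hand, sandwiching the exit event between $\{y+U_{T_0}\in B_-\}$ and $\{y+U_{T_0}\in B_+\}$ reduces to a deterministic inversion of \eqref{eq:Y_after_Duhamel}.

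The main obstacle will be the premature-exit component of (i): ruling out, with probability $1-\smallo{\eps^{\rho_i}}$, an initial exit through $\FF^j_L$ for some $j\ne i$ during $[0,T_0+R]$. For $j>i$ the slower growth rate $\lambda_j<\lambda_i$ keeps $|Y^j_t|$ bounded well away from $L$ on this interval on a high-probability event, so the issue is mild. For $j<i$, however, $\eps e^{\lambda_j T_0}$ is already of macroscopic order, and ruling out exit through $\FF^j_L$ reduces to bounding the probability that $y^j+U^j_t$ enters a thin neighborhood of $\pm Le^{-\lambda_j t}/\eps$ at some $t\le T_0+R$; this is exactly where the Malliavin-calculus density bounds of \cite{exit_time} are indispensable, since they deliver the sharp polynomial control on the density of $U^j_{T_0}$ needed to match $\eps^{\rho_i}$. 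Classical large-deviation estimates yield bounds that decay only exponentially in $\eps^{-2}$ and are far too coarse for the polynomial scales required here.
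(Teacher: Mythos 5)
Your reading of the Duhamel inversion, the role of the buffers $\eps^{\gj}$, and the need to control the increments $U^j_t-U^j_{T_0}$ is essentially right, and matches the skeleton of the paper's argument (though the paper needs no upper window $[T_0-R,T_0+R]$: it bounds $|U^j_{\ti\vee T_0}-U^j_{T_0}|$ uniformly over \emph{all} times after $T_0$, since $\langle M^j\rangle_\infty-\langle M^j\rangle_{T_0}\le Ce^{-2\lj T_0}$, so only the one-sided bound $\ti\ge T_0$ is ever required). The genuine gap is in your treatment of premature exits. You propose to rule out, with probability $1-\smallo{\eps^{\rho_i}}$, an exit through $\FF^j_L$ with $j<i$ before time $T_0+R$. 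That event cannot be ruled out: for $i\ge 2$ the process typically exits through $\FF^1_L$ long before $\ti$, so the probability of a premature exit through a face $j<i$ tends to $1$, not to $0$; this is precisely why the target event has probability of order $\eps^{\rho_i}$ in the first place. What must be shown instead is that the \emph{intersection} of a premature exit with the target event $\{Y^j_\tau\in[a^j,b^j],\forallj;\ Y^i_\tau=L\}$ has probability $\smallo{\eps^{\rho_i}}$, and the mechanism for this is not a density estimate but the strict inclusion $[a^j,b^j]\subset(-L,L)$: if $|Y^j|$ reaches $L$ at some $\tj<\ti$ and nevertheless $Y^j_{\ti}\in[a^j,b^j]$, then by the strong Markov property and $Y^j_t=\eps e^{\lj t}(y^j+U^j_t)$ one needs $\eps|U^j_{\ti}|\ge L-|a^j|\vee|b^j|$, an event of probability $\Oc(\eps^p)$ for every $p$ by BDG moment bounds. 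This is the content of the paper's Lemma~\ref{lemma:drop_tau_i}, which replaces $\tau$ by $\ti$ in the event before any inversion is performed; without some such step your sandwich does not close.

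A second, related misconception: Malliavin calculus plays no role in this lemma, so it is not the tool that rescues the step above. The density bounds from \cite{exit_time} enter only later, in Lemma~\ref{lemma:iteration}, where $U_{T_0}$ is compared with the Gaussian $Z_{T_0}$. In Lemma~\ref{lemma:rough_est} every discrepancy event --- the return event just described, the event $\{\ti<T_0\}=\{|y^i+U^i_{\ti}|>(\log\eps^{-1})^{\kappa+1}\}$, and the events $\{|U^j_{\ti\vee T_0}-U^j_{T_0}|>\eps^{\gj}\}$ --- is superpolynomially small and is handled by the exponential martingale inequality and moment bounds using only the boundedness of $F$ and $G$ from \eqref{eq:modified_F}. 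You should also note that an upper localization $\ti\le T_0+R$ of the kind you propose would itself require a small-ball estimate on $|y^i+U^i_{\ti}|$ (i.e., a density bound), which is exactly what the paper's one-sided argument is designed to avoid.
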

Note that due to~\eqref{eq:gamma_j_condition}, for small $\eps>0$, the terms $\eps^{\gamma_j}$ are small compared to the leading order terms in the definition
of $J_{\pm,j}^{(x^i)}$.

\begin{Lemma}\label{lemma:iteration}
Let $T_0$ be given in \eqref{eq:def_T_0} and $\Zc$ be a centered Gaussian vector with covariance matrix $\Czero$ given in \eqref{eq:def_C_0_Czero}. Then for each $\upsilon \in(0,1)$, there is  $\delta>0$ such that
 \begin{align*}
    \sup_{|y|\leq \eps^{\upsilon-1}} \big|\Prob{y+U_{T_0}\in B_\pm}-\Prob{y+\Zc\in B_\pm}\big| = \smallo{\eps^{\rho_i+\delta}}.
 \end{align*}
\end{Lemma}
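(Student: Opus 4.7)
The plan is to approximate $U_{T_0}$ by the Gaussian random vector
\[
\widetilde{M}^j_\infty = \int_0^\infty e^{-\lambda_j s}\sigma^j_l(0)\, dW^l_s,
\]
which is centered with covariance $\sum_l \sigma^j_l(0)\sigma^k_l(0)/(\lambda_j+\lambda_k) = \Czero^{jk}$, hence equal in distribution to $\Zc$. Setting $\widetilde{M}_{T_0}^j = \int_0^{T_0} e^{-\lambda_j s}\sigma^j_l(0)\, dW^l_s$, I would decompose
\[
y + U_{T_0} = y + \widetilde{M}_{T_0} + \eps V_{T_0} + (M_{T_0} - \widetilde{M}_{T_0})
\]
and carry out the comparison in two steps: first pass from $\widetilde{M}_\infty$ to $\widetilde{M}_{T_0}$, and then from $\widetilde{M}_{T_0}$ to $U_{T_0}$.

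The passage $\widetilde{M}_\infty \to \widetilde{M}_{T_0}$ is straightforward. The discrepancy $\eta := \widetilde{M}_\infty - \widetilde{M}_{T_0}$ is a centered Gaussian independent of $\widetilde{M}_{T_0}$ with $j$-th variance $O(e^{-2\lambda_j T_0}) = O(\eps^{2\lambda_j/\lambda_i}(\log\eps^{-1})^{O(1)})$. Writing $B_\pm^{(\beta)}$ for the $\beta$-neighborhood of $B_\pm$ and $B_\pm^{(-\beta)}$ for its $\beta$-interior shrinkage, a standard shift argument yields, for $\beta = \eps^\alpha$,
\[
\big|\Prob{y+\widetilde{M}_\infty\in B_\pm} - \Prob{y+\widetilde{M}_{T_0}\in B_\pm}\big| \leq \Prob{|\eta|>\beta} + C\,\mathrm{Leb}\big(B_\pm^{(\beta)}\setminus B_\pm^{(-\beta)}\big),
\]
where $C$ is a uniform-in-$\eps$ bound on the Gaussian density of $\widetilde{M}_{T_0}$. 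Both summands can be made $o(\eps^{\rho_i+\delta})$ for an appropriate choice of $\alpha>0$: the first by Gaussian tail bounds, the second by a direct computation of the Lebesgue measure of the $\beta$-shell of $B_\pm$ from the explicit product structure in~\eqref{eq:defOfB_pm}, where the geometry of the thin slabs $J_{\pm,j}^{(x^i)}$ controls how much mass a narrow enlargement can pick up.

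The main obstacle is the passage $\widetilde{M}_{T_0}\to U_{T_0}$. The error $\eps V_{T_0}+(M_{T_0}-\widetilde{M}_{T_0})$ is \emph{not} independent of $\widetilde{M}_{T_0}$, so the analogous shift argument is unavailable; moreover, since $|Y_s|\lesssim \eps^\upsilon e^{\lambda_1 s}$ can grow exponentially, the $j$-th quadratic variation of $M_{T_0}-\widetilde{M}_{T_0}$ is of order $\eps^{2\upsilon}\eps^{-2(\lambda_1-\lambda_j)/\lambda_i}$, which is not $o(\eps^{\rho_i})$ for $j<i$, so no naive moment bound on the error is enough. This is where the Malliavin calculus density estimates from~\cite{exit_time} enter: they provide uniform control of the (conditional) density of $U_{T_0}$, sufficient to bound $\big|\Prob{y+U_{T_0}\in B_\pm}-\Prob{y+\widetilde{M}_{T_0}\in B_\pm}\big|$ by a $\beta$-shell expression analogous to the one above, now with the Malliavin density bound in place of the Gaussian one. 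Executing this step with enough uniformity in $|y|\leq\eps^{\upsilon-1}$ and enough polynomial gain over $\eps^{\rho_i}$ is the crux of the proof; the name of the lemma suggests an iterative splitting of $[0,T_0]$ into subintervals on each of which the local Gaussian approximation has controllable error, with the accumulated errors recombined through the Malliavin density bounds.
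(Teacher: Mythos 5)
Your treatment of the first comparison, from $\Zc\stackrel{d}{=}\widetilde M_\infty$ to $\widetilde M_{T_0}$, is fine and is in fact more elementary than the paper's: the paper invokes a pointwise density comparison between $Z_{T(\eps)}$ and $Z_\infty$ (part (2) of Lemma~\ref{Lemma:density_est}), whereas you exploit the independence of the Gaussian tail increment; both routes work, and both rest on the same observation that integrating $e^{-c|x|^2}$ over $B_\pm$ (or a thin shell around it) produces the factor $\eps^{\rho_i}$ up to logarithms via the containment \eqref{eq:contain_B_pm}.

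The gap is in the second comparison, from $\widetilde M_{T_0}$ to $U_{T_0}$, which you correctly identify as the crux and then do not carry out: you describe what a proof should look like and defer the execution. Two concrete problems. First, the mechanism you propose for this step (a uniform bound on the density of $U_{T_0}$ combined with a $\beta$-shell estimate) cannot work over the full interval $[0,T_0]$, for precisely the reason you yourself computed: the error $M_{T_0}-\widetilde M_{T_0}+\eps V_{T_0}$ has fluctuations of order $\eps^{\upsilon-(\lambda_1-\lambda_j)/\lambda_i}$ in direction $j$, which for $j<i$ and small $\upsilon$ dwarfs the width $\eps^{\fl-1}$ of the slab $J^{(x^i)}_{\pm,j}$ in \eqref{eq:defOfB_pm}; the error cannot be confined to a shell thin enough to be negligible, no matter how regular the density is. What is actually needed, and what \cite{exit_time} supplies (part (1) of Lemma~\ref{Lemma:density_est}), is a pointwise bound on the \emph{difference} of densities, $|\rho^y_{U_T}(x)-\rho_{Z_T}(x)|\le C\eps^\delta\big(1+\pp(\eps^{1-\upsilon}|y|)\big)e^{-c|x|^2}$, which is integrated directly over $B_\pm$ with no shell at all. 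Second, this estimate is only valid for $T\le\theta\log\eps^{-1}$ with a fixed $\theta$ that may be smaller than $1/\lambda_i$, so it cannot be applied once at $T=T_0$; the paper splits $[0,T_0]$ into $N$ equal pieces and runs an induction (Lemma~\ref{lemma:iterative_est}) in which the induction hypothesis must be strengthened to include an auxiliary shift $e^{-\lam t_k}w$ with $|w|\le\eps^{\upsilon'-1}$, the Markov property rescales the starting point to $z(u)=e^{\lam t_{k-1}}(y+u)$ whose size must be controlled on the target event using \eqref{eq:contain_B_pm} and the choice \eqref{eq:condition_upsilon'} of $\upsilon'$, and the Gaussian pieces are recombined through the identity in law of $Z_{t_{k-1}}+e^{-\lam t_{k-1}}\widetilde Z_{t_1}$ and $Z_{t_k}$. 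None of this bookkeeping is routine, and none of it appears in your proposal; ``the accumulated errors recombined through the Malliavin density bounds'' is a label for the missing argument rather than the argument itself.
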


\medskip

Let us define
\begin{align}
    h_\eps(y)&= \eps^{-\rho_i}\Prob{y+\Zc\in B_\pm },\label{eq:def_h_eps_initial} \\
    h_0(y) & =L^{-\sum_{j<i}\fl}\chi^i_\pm(y)\prod_{j<i}(b^j-a^j) \label{eq:def_h_0_initial}.
\end{align}
Note the dependence on $\pm$, which is suppressed to avoid heavy notation.
Proposition \ref{prop:Rectangle_set_on_box} follows from Lemmas \ref{lemma:rough_est}, \ref{lemma:iteration} and the following estimate:
\begin{align}\label{eq:est_h_eps(y) - h_0(eps^-1_f^-1(eps y ))}
    \sup_{|y|\leq K(\eps)}\big|h_\eps(y) - h_0(\eps^{-1}f^{-1}(\eps y ))\big| = o(1).
\end{align}
Our plan is to derive \eqref{eq:est_h_eps(y) - h_0(eps^-1_f^-1(eps y ))} in the remainder of this subsection and then prove Lemmas~\ref{lemma:rough_est} and \ref{lemma:iteration} in Subsection~\ref{sec:proofs-of-auxiliary}. 

\subsubsection{Proof of \eqref{eq:est_h_eps(y) - h_0(eps^-1_f^-1(eps y ))}}

We split \eqref{eq:est_h_eps(y) - h_0(eps^-1_f^-1(eps y ))} into estimating $|h_\eps(y) - h_0(y)|$ and $|h_0(y) - h_0(\eps^{-1}f^{-1}(\eps y ))|$ separately. The techniques involved are elementary but the proof is tedious. We proceed in steps. 

\smallskip

Step 1. We express $h_\eps$ and $h_0$ explicitly in the form of Gaussian integrals over some sets. For each $x\in \R^d$, let
\begin{gather}
    x^{<i} = (x^1,...,x^{i-1}), \quad x^{\geq i} = (x^{i},...,x^d), \quad x^{>i} = (x^{i+1},...,x^d), \quad \hat{x}=(x^{<i},x^{>i});\nonumber\\
    \hat{B}^{(x^i)}_\pm = B_{\pm,<i}^{(x^i)}\times B_{\pm,>i}^{(x^i)}\label{eq:def_hatB}
\end{gather}
where $B_{\pm,<i}^{(x^i)}$ and $B_{\pm,>i}^{(x^i)}$ are given in \eqref{eq:defOfB_pm}.
When $y$ is fixed as in $Y_0=\eps y$, we write $\tilde{x}=(-y^{<i},x^{\geq i})\in \R^d$ for each $x\in\R^d$. 

Now, let us introduce
\begin{align}
    g_\eps(x^i,y) &=\frac{\eps^{-\rho_i}}{\sqrt{(2\pi)^d\det \Czero}}\int_{\hat{B}^{(x^i+y^i)}_\pm -\hat{y}}e^{-\frac{1}{2}x^\intercal\Czero^{-1} x}d\hat{x},\label{eq:def_g_eps}\\
    g_0(x^i,y)&=\frac{\prod_{j<i}(b^j-a^j)L^{-\fl}|x^i+y^i|^\fl}{\sqrt{(2\pi)^d\det\Czero}}\int_{ \R^{d-i}} e^{-\frac{1}{2}\tilde{x}^\intercal\mathcal{C}^{-1}_0 \tilde{x}}dx^{>i}.\label{eq:def_g_0}
\end{align}
Recall the definition of $I_\pm$ below \eqref{eq:defOfB_pm}. Additionally, we set
\begin{align}\label{def_interval_II}
    \II_+(y^i)=[-y^i,\infty) \quad \text{and}\quad \II_-(y^i)=(-\infty,-y^i].
\end{align}
Using the definitions \eqref{eq:def_h_eps_initial}--\eqref{eq:def_h_0_initial}, we can see
\begin{align*}
    h_\eps(y)&= \int_{I_{\pm}-y^i} g_\eps(x^i,y)dx^i,\\
    h_0(y) &= \int_{\II_\pm(y^i)} g_0(x^i,y)dx^i.
\end{align*}

\smallskip

Step 2. We record some useful estimates, which will be proved later. For simplicity of notation, we write
\begin{align}\label{eq:def_z_eps}
    z_\eps(y)=\eps^{-1}f^{-1}(\eps y).
\end{align}
For convenience, we set $z_0(y)=y$. The following holds for all $\eps\in[0,1]$ and $x^i\in\R$,
\begin{align}
    & \sup_{|y|\leq K(\eps)}|g_\eps(x^i,y)|\leq C\big(1+K(\eps)\big)^p e^{-c|x^i|^2}, \label{eq:boundedness_of_g_eps}\\
    &\sup_{|y|\leq K(\eps)}|g_\eps(x^i,y)-g_0(x^i,y)|\leq C \eps^{q} e^{-c|x^i|^2}, \label{eq:est_g_eps_-_g_0}\\
    &\sup_{|y|\leq K(\eps)}|y-z_\eps(y)|\leq C\eps^q,\label{eq:y^i-z^i_eps_est}\\
    &\sup_{|y|\leq K(\eps)}|g_0(x^i,y)-g_0(x^i,z_\eps(y))| \leq C \eps^{q} e^{-c|x^i|^2}, \label{eq:prep_4_h_0_est}
\end{align}
for some $C,c,p,q>0$.

\medskip

Step 3. We estimate $|h_\eps(y)-h_0(y)|$ for $|y|\leq K(\eps)$. We shall only treat the case where $\pm$ is $+$ and $\mp$ is $-$. The other case is similar. We start by writing
\begin{align*}
    |h_\eps(y) - h_0(y)|\leq &\int_{I_+ - y^i}|g_\eps(x^i,y)-g_0(x^i,y)|dx^i+ \int_{-\eps^{\gi}-y^i}^{-y^i}|g_\eps(x^i,y)|  dx^i \\
    &+ \int_{(\log\eps^{-1})^{\kappa+1}+\eps^\gi-y^i}^\infty|g_0(x^i,y)|dx^i.
\end{align*}
Using \eqref{eq:boundedness_of_g_eps} and \eqref{eq:est_g_eps_-_g_0}, we have, for some $q'>0$,
\begin{align}\label{eq:est_h_eps(y) - h_0(y)}
\begin{split}
    \sup_{|y|\leq K(\eps)}|h_\eps(y) - h_0(y)| \leq & \int_\R C \eps^{q} e^{-c|x^i|^2}dx^i \quad +C\eps^\gi (1+K(\eps))^p\\
    &+\int^\infty_{(\log\eps^{-1})^{\kappa+1}+\eps^\gi-K(\eps)}C (1+K(\eps))^pe^{-c|x^i|^2}dx^i = o(\eps^{q'}).
\end{split}
\end{align}

\smallskip

Step 4. We estimate $|h_0(y)-h_0(z_\eps(y))|$ for $|y|\leq K(\eps)$. Recall the definition of $\II_\pm(y^i)$ in \eqref{def_interval_II} and note that, due to \eqref{eq:y^i-z^i_eps_est},
\begin{align}\label{eq:symmetric_difference_est}
    \big|\II_\pm(y^i)\triangle\II_\pm(z_\eps^i(y))\big|\leq |y^i-z^i_\eps(y)|\leq C\eps^q.
\end{align}
Here $\triangle$ denotes the symmetric difference of sets. By the formula for $h_0(y)$ in Step~1, we first write
\begin{align*}
&|h_0(y)-h_0(z_\eps(y))|\\
&\leq \int_{\II_\pm(z_\eps^i(y))}|g_0(x^i,y)-g_0(x^i,z_\eps(y))|dx^i+\int_{\II_\pm(y^i)\triangle\II_\pm(z_\eps^i(y))}|g_0(x^i,y)|+|g_0(x^i,z_\eps(y))| dx^i.
\end{align*}
We can bound $|g_0(x^i,z_\eps(y))|$ by using \eqref{eq:boundedness_of_g_eps} and \eqref{eq:prep_4_h_0_est}. Apply this, \eqref{eq:prep_4_h_0_est} and \eqref{eq:symmetric_difference_est} to see
\begin{align}\label{eq:est_h_0(y) - h_0(z)}
\sup_{|y|\leq K(\eps)}|h_0(y)-h_0(z_\eps(y))|\leq \int_\R C\eps^{q}e^{-c|x^i|^2}dx^i +  C\eps^q(1+K(\eps))^p=o(\eps^{q''})
\end{align}
for some $q''>0$.

\smallskip

In conclusion, \eqref{eq:est_h_eps(y) - h_0(eps^-1_f^-1(eps y ))} follows from \eqref{eq:est_h_eps(y) - h_0(y)} and \eqref{eq:est_h_0(y) - h_0(z)}. 

\bigskip

It remains to prove estimates listed in Step~2. We prove them in the following order: \eqref{eq:y^i-z^i_eps_est}, \eqref{eq:boundedness_of_g_eps},  \eqref{eq:prep_4_h_0_est}, \eqref{eq:est_g_eps_-_g_0}.

\begin{proof}[Proof of \eqref{eq:y^i-z^i_eps_est}]
Recall the definition of $z_\eps(y)$ in \eqref{eq:def_z_eps} and that the local diffeomorphism~$f$ satisfies $f(0)=0$ and $Df(0)=I$. Since $|y|\leq K(\eps)$, if $\eps$ is small, then $|\eps y|$ is uniformly close to $0$. By expanding $f^{-1}$ at $0$, we can see that there are $C,q>0$ such that
\begin{align*}
    |y-z_\eps(y)|=|y - \eps^{-1}(f^{-1}(\eps y))|\leq C\eps|y|^2\leq C\eps^q,\quad \forall|y|\leq K(\eps).
\end{align*}
This gives \eqref{eq:y^i-z^i_eps_est}.
\end{proof}

\begin{proof}[Proof of \eqref{eq:boundedness_of_g_eps}]
Since $\sigma(0)$ has full rank, by definition of $\Czero$ in \eqref{eq:def_C_0_Czero}, there is  $c>0$ such that 
\begin{align}\label{eq:gaussian_density_bound}
    e^{-\frac{1}{2}x^\intercal\Czero^{-1}x }\leq e^{-c|x|^2},\qquad \forall x\in\R^d.
\end{align}
Hence, there is $C>0$ such that 
\begin{align*}
        |g_0(x^i, y)|&\leq C |x^i+y^i|^{\sum_{j<i}\fl}e^{-c|x^i|^2}.
\end{align*}
Absorb polynomials of $x^i$ into the exponential to see, for some $C,c,p>0$,
\begin{align*}
    \sup_{|y|\leq K(\eps)}|g_0(x^i, y)|\leq C\big(1+K(\eps)\big)^p e^{-c|x^i|^2}.
\end{align*}
From this and \eqref{eq:est_g_eps_-_g_0}, we obtain \eqref{eq:boundedness_of_g_eps}.
\end{proof}

\begin{proof}[Proof of \eqref{eq:prep_4_h_0_est}]

We simplify the expression \eqref{eq:def_g_0} into
\begin{align*}
    g_0(x^i,y)=C|x^i+y^i|^{p_0}\int_{\R^{d-i}}e^{-\frac{1}{2}x^\intercal \Czero^{-1}x}\big|_{x^{<i}=-y^{<i}}dx^{>i},
\end{align*}
for some $C,p_0>0$. Then, we have
\begin{align}\label{eq:prep_6_h_0_est}
\begin{split}
    &|g_0(x^i,y)-g_0(x^i,z_\eps(y))|\\
    &\quad\leq C\Big(|x^i+y^i|^{p_0}-|x^i+z^i_\eps(y)|^{p_0}\Big)\int_{\R^{d-i}}e^{-\frac{1}{2}x^\intercal \Czero^{-1}x}\big|_{x^{<i}=-z_\eps^{<i}(y)}dx^{>i}\\
&\qquad   + C|x^i+y^i|^{p_0}\int_{\R^{d-i}}\bigg|e^{-\frac{1}{2}x^\intercal \Czero^{-1}x}\big|_{x^{<i}=-y^{<i}}-e^{-\frac{1}{2}x^\intercal \Czero^{-1}x}\big|_{x^{<i}=-z_\eps^{<i}(y)}\bigg|dx^{>i}.
\end{split}
\end{align}

Let us estimate the terms on the right of \eqref{eq:prep_6_h_0_est}. Using \eqref{eq:y^i-z^i_eps_est}, we have, for some $C,p,q>0$, 
\begin{align}\label{eq:x^i+y^i_est}
    \Big| |x^i+y^i|^{p_0}-|x^i+z^i_\eps(y)|^{p_0}\Big| \leq C (|x^i|^p+|y|^p)\eps^{q'} \leq C\eps^q(|x^i|^p+1), \quad\forall|y|\leq K(\eps).
\end{align}
By \eqref{eq:gaussian_density_bound}, there are $C,c>0$ such that
\begin{align}\label{eq:g_0_est_prep_2}
    \int_{\R^{d-i}}e^{-\frac{1}{2}x^\intercal \Czero^{-1}x}\big|_{x^{<i}=-y^{<i}}dx^{>i}\leq Ce^{-c|x^i|^2}.
\end{align}

To estimate the integrand of the last integral in \eqref{eq:prep_6_h_0_est}, we need the following observation. Since $\Czero$ is symmetric and positive definite, there are $C,C'>0$ such that, for all $w,z\in\R^d$,
\begin{align}
        &\big| e^{-\frac{1}{2}w^\intercal\Czero^{-1}w}-e^{-\frac{1}{2}z^\intercal\Czero^{-1}z}\big| \leq C (e^{-c|w|^2}\vee e^{-c|z|^2})|w+z||w-z|\nonumber\\
    &\quad \leq C e^{-c|w|^2}(|2w|+|w-z|)|w-z|\Ind{|w|\leq |z|}\nonumber\\
    &\qquad\qquad+C e^{-c|z|^2}(|2z|+|w-z|)|w-z|\Ind{|w|> |z|}\nonumber\\
    &\quad \leq C'(e^{-c'|w|^2}+e^{-c'|z|^2})(|w-z|+|w-z|^2).\label{eq:difference_gaussian_density}
\end{align}
Using this estimate and \eqref{eq:y^i-z^i_eps_est}, we can obtain
\begin{align}
        &\,\Big|e^{-\frac{1}{2}x^\intercal \Czero^{-1}x}\big|_{x^{<i}=-y^{<i}}-e^{-\frac{1}{2}x^\intercal \Czero^{-1}x}\big|_{x^{<i}=-z_\eps^{<i}(y)}\Big|\nonumber\\
    \leq&\, e^{-c|x^{\geq i}|^2}\big(|y^{<i}-z_\eps^{<i}(y)|+|y^{<i}-z_\eps^{<i}(y)|^2\big)\nonumber\\
    \leq&\, C\eps^qe^{-c|x^{\geq i}|^2},\qquad \forall |y|\leq K(\eps),\label{eq:g_0_est_prep_3}
\end{align}
Insert \eqref{eq:x^i+y^i_est}, \eqref{eq:g_0_est_prep_2}, 
and~\eqref{eq:g_0_est_prep_3} to the right hand side of \eqref{eq:prep_6_h_0_est} to see
\begin{align*}
    &|g_0(x^i,y)-g_0(x^i,z_\eps(y))|\\
&\leq C\eps^q(|x^i|^p+1)e^{-c|x^i|^2}+C(|x^i|^{p_0}+|K(\eps)|^{p_0})\eps^qe^{-c|x^i|^2}\\
&\leq C\eps^{q'}e^{-c'|x^i|^2}, \qquad \forall|y|\leq K(\eps),
\end{align*}
for some $c',q'>0$. This completes the proof.

\end{proof}

\begin{proof}[Proof of \eqref{eq:est_g_eps_-_g_0}]
Again, the techniques involved are elementary while the proof is tedious. Recall $g_\eps$ and $g_0$ in \eqref{eq:def_g_eps}--\eqref{eq:def_g_0}, and the notation in \eqref{eq:def_hatB}. To estimate the difference between $g_\eps$ and $g_0$, we introduce
\begin{align*}\begin{split}
              &\mathtt{I}= \frac{\eps^{-\rho_i}}{\sqrt{(2\pi)^d\det \Czero}}\int_{\hat{B}_\pm^{(x^i+y^i )}-\hat{y} }e^{-\frac{1}{2}\tilde{x}^\intercal \Czero^{-1}\tilde{x}}d\hat{x},\\
        &\mathtt{II}=\frac{\prod_{j<i}(b^j-a^j)L^{-\fl}|x^i+y^i|^{\fl}}{\sqrt{(2\pi)^d \det \Czero}}\int_{B_{\pm,>i}^{(x^i+y^i )}-y^{>i}} e^{-\frac{1}{2}\tilde{x}^\intercal \Czero^{-1}\tilde{x}}dx^{> i}.
    \end{split}
\end{align*}
Then, we write
\begin{align}\label{eq:split_difference_between_g_eps-g_0}
    \begin{split}
        &\quad\quad\big|g_\eps(x^i,y)-g_0(x^i,y)\big| \leq \big|g_\eps(x^i,y)-\mathtt{I}\big|+\big|\mathtt{I}-\mathtt{II}\big|+\big|\mathtt{II}-g_0(x^i,y)\big|.
    \end{split}
\end{align}
We proceed in steps. In each step, we estimate one term on the right of the above display. 

\medskip

Step 1. We estimate $|g_\eps(x^i,y)-\mathtt{I}|$ for $|y|\leq K(\eps)$. We start by writing
\begin{align*}
    |g_\eps(x^i,y)-\mathtt{I}| \leq C\eps^{-\rho_i}\int_{\hat{B}_\pm^{(x^i+y^i )}-\hat{y} }| e^{-\frac{1}{2}x^\intercal\Czero^{-1}x}-e^{-\frac{1}{2}\tilde{x}^\intercal\Czero^{-1}\tilde{x}}\big| d\hat{x}
\end{align*}
Let us estimate the integrand. Recall the estimate \eqref{eq:difference_gaussian_density}. Using $\fl-1>0$ for $j<i$, $|y|\leq K(\eps)$, and $e^{-c|x|^2}$ to absorb powers of $|x^i|$, we obtain that, for all
$x,y$ satisfying $\hat{x}\in \hat{B}_\pm^{(x^i+y^i )}-\hat{y}$ and $|y|\leq K(\eps)$, 
\begin{align*}\begin{split}
        &\big| e^{-\frac{1}{2}x^\intercal\Czero^{-1}x}-e^{-\frac{1}{2}\tilde{x}^\intercal\Czero^{-1}\tilde{x}}\big|  \leq Ce^{-c_1|x^{\geq i}|^2}\big(|x^{<i}+y^{<i}|+|x^{<i}+y^{<i}|^2\big)\\
        &\leq Ce^{-c_1|x^{\geq i}|^2}\sum_{j<i}(|v^j|+|v^j|^2)\Big|_{v^j=\big|\eps^{\fl -1}(|x^i+y^i |+ \eps^\gi)^\fl+ \eps^\gj \big|}\leq C \eps^{q_1}e^{-c_2|x^{\geq i}|^2},
    \end{split}
\end{align*}
for some $C, c_1,c_2, q_1>0$.
This, along with the definitions of $\hat{B}^{(x^i+y^i)}_{\pm}$ in \eqref{eq:def_hatB}, $B^{(x^i+y^i)}_{\pm,<i}$ in \eqref{eq:defOfB_pm} and $\rho_i$ in \eqref{eq:rho_i}, implies
\begin{align}\label{eq:est_1st_difference}
    \begin{split}
        &|g_\eps(x^i,y)-\mathtt{I}| \leq C\eps^{-\rho_i}\int_{\hat{B}_\pm^{(x^i+y^i )}-\hat{y} } \eps^{q_1}e^{-c_2|x^{\geq i}|^2}d\hat{x}\\
        &= C\eps^{q_1-\rho_i}e^{-c_2|x^i|^2}\bigg(\int_{B_{\pm,<i}^{(x^i+y^i )}-y^{<i} }dx^{<i} \bigg)\bigg(\int_{B_{\pm,>i}^{(x^i+y^i )}-y^{>i} }e^{-c_2|x^{> i}|^2} dx^{>i}\bigg)\\
        &\leq C\eps^{q_1-\rho_i}\big|B_{\pm,<i}^{(x^i+y^i)}\big|\, e^{-c_2|x^i|^2}\\
        &\leq C\eps^{q_1-\rho_i}\eps^{\sum_{j<i}\fl-1}(|x^i|+1)^{\sum_{j<i}\fl}e^{-c_2|x^i|^2}\leq C\eps^{q_1}e^{-c_3|x^i|^2}.
    \end{split}
\end{align}
Here and henceforth we use $|B|$ to denote the Lebesgue measure of a set $B$.

\medskip
Step 2. We estimate $|\mathtt{I}-\mathtt{II}|$. First note that, by integrating over the first $i-1$ coordinates in~$\mathtt{I}$ and the definition of $B^{(x^i,y^i)}_{\pm,<i}$ in \eqref{eq:defOfB_pm}, we have
\begin{align*}
    \mathtt{I} = \frac{\prod_{j<i}\big((b^j-a^j)L^{-\fl}(|x^i+y^i |\pm   \eps^\gi)^\fl\pm 2\eps^{\gj-(\fl - 1)}\big)}{\prod_{j<i}(b^j-a^j)L^{-\fl}|x^i+y^i|^\fl}\mathtt{II}.
\end{align*}
Also, $|\mathtt{II}|\leq C \prod_{j<i}|x^i+y^i|^\fl e^{-c|x^i|^2}$ for some $C>0$. Hence, using $|y|\leq K(\eps)$ and $e^{-c|x^i|^2}$ to absorb powers of $|x^i|$, we can obtain, for some $C,c, q_2>0$,
\begin{align}\label{eq:est_2nd_difference}
    \begin{split}
        \big|\mathtt{I}-\mathtt{II}\big|\leq C\eps^{ q_2}e^{-c|x^i|^2}.
    \end{split}
\end{align}

Step 3. We estimate $|\mathtt{II}-g_0(x^i, y)|$. Note that
\begin{align*}
    |\mathtt{II}-g_0(x^i,y)|&\leq C\int^\infty_{\R^{d-i}\setminus (B_{\pm,>i}^{(x^i+y^i )}-y^{>i})}|x^i+y^i|^{\sum_{j<i}\fl}e^{-c|x^{\geq i}|^2}dx^{> i}\\
        &\leq e^{-c|x^i|^2}\sum_{j> i}\int_{\R \setminus (J^{(x^i+y^i )}_{\pm,j}-y^j )} |x^i+y^i|^{\sum_{j<i}\fl}e^{-c|x^j|^2}dx^j.
\end{align*}
We split the integrals after the last inequality into
\begin{align*}
    &\int_{\R \setminus (J^{(x^i+y^i )}_{\pm,j}-y^j )} |x^i+y^i|^{\sum_{j<i}\fl}e^{-c|x^j|^2}dx^j \\
        &= \int_{b^jL^{-\fl}\eps^{\fl-1}(|x^i+y^i |\pm\eps^\gi)^\fl \pm \eps^\gj-y^j }^\infty |x^i+y^i|^{\sum_{j<i}\fl}e^{-c|x^j|^2}dx^j\\
        &+\int^{a^jL^{-\fl}\eps^{\fl-1}(|x^i+y^i |\pm\eps^\gi)^\fl \mp \eps^\gj-y^j }_{-\infty}
    |x^i+y^i|^{\sum_{j<i}\fl}e^{-c|x^j|^2}dx^j.
\end{align*}
Choosing $q'>0$ sufficiently small, we consider two cases. If $|x^i+y^i|\leq \eps^{q'}$, then for some $q_3>0$, the above display is bounded by $C\eps^{q_3}$. For the case where $|x^i+y^i|> \eps^{q'}$, let us set $c_j= (|a^j|\wedge|b^j|)L^{-\fl}$ and recall that $\fl -1 <0$ for $j>i$.  For some $q_4,q_5,p>0$, the above display can be bounded by
\begin{align*}
    &2\int^\infty_{c_j\eps^{\fl -1 }(\eps^{q'}-\eps^\gi)^{\fl} - \eps^\gj - K(\eps)}|x^i+y^i|^{\sum_{j<i}\fl}e^{-c|x^j|^2}dx^j\\
    &\leq C\eps^{q_4}|x^i+y^i|^{\sum_{j<i}\fl}\leq C\eps^{q_5}(|x^i|^p+1)\qquad \forall |y|\leq K(\eps).
\end{align*}
In deriving the above inequality, we have used $\fl -1 <0$ and chosen $q'<\gi$. Combining the above, we have
\begin{align}\label{eq:est_5th_difference_case_1}
        |\mathtt{II}-g_0(x^i,y)|&\leq C\big(\eps^{q_3}+\eps^{q_5}(1+|x^i|^p)\big)e^{-c|x^i|^2}\leq C\eps^{q_6}e^{-c'|x^i|^2}.
\end{align}

\bigskip

To conclude, we insert \eqref{eq:est_1st_difference}, \eqref{eq:est_2nd_difference}, and  \eqref{eq:est_5th_difference_case_1} into \eqref{eq:split_difference_between_g_eps-g_0}. As a consequence, we obtain that, for some constants $q', C, c'>0$, the following holds for $\eps$ sufficiently small,
\begin{align*}
    \begin{split}
        \sup_{|y|\leq K(\eps)}&|g_\eps(x^i,y)-g_0(x^i,y)|\leq C \eps^{q'} e^{-c'|x^i|^2},
    \end{split}
\end{align*}
as desired.

\end{proof}

\subsection{Proofs of Lemmas \ref{lemma:rough_est} and \ref{lemma:iteration}}
\label{sec:proofs-of-auxiliary}
\begin{proof}[Proof of Lemma \ref{lemma:rough_est}]
Let $\tau_j = \inf\{t> 0: |Y^j_t| = L \}$.  We recall \eqref{eq:def_tau_L} and the notation $\tau=\tau_L$. Hence, we have $\tau = \min_{j=1,2,...,d}\{\tau_j \}$. First, we show the following.

\begin{Lemma} \label{lemma:drop_tau_i}
If $[a^j,b^j]\subset(-L,L)$ for all $j\neq i$, then, with $\rho_i$ defined in \eqref{eq:rho_i},
\begin{align*}
    \sup_{|y|\leq K(\eps)}\big|\Prob{Y^j_\tau \in [a^j, b^j], \forallj;Y^i_\tau =L  } - \Prob{Y^j_\ti \in [a^j, b^j], \forallj;Y^i_\ti =L } \big | = \smallo{\eps^{\rho_i}}.
\end{align*}
\end{Lemma}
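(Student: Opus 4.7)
Let me set $E = \{Y^j_\tau \in [a^j,b^j],\ \forallj;\ Y^i_\tau = L\}$ and $E_i = \{Y^j_{\tau_i} \in [a^j,b^j],\ \forallj;\ Y^i_{\tau_i} = L\}$. My plan is to first establish the set-theoretic identity $E = E_i \cap \{\tau = \tau_i\}$, and then to bound $\Probx{\eps y}{E_i \cap \{\tau < \tau_i\}}$. The identity is immediate: on $E$ we have $|Y^j_\tau| < L$ for every $j \neq i$ because $[a^j,b^j] \subset (-L,L)$, forcing $\tau < \tau_j$ for such $j$ and hence $\tau = \tau_i$; the reverse inclusion is trivial. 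Once the identity is in place, $|\Prob{E} - \Prob{E_i}| = \Prob{E_i \cap \{\tau < \tau_i\}}$, and the lemma reduces to showing the latter is $o(\eps^{\rho_i})$ uniformly in $|y|\le K(\eps)$.

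Next I would decompose according to which coordinate exits first. On $\{\tau < \tau_i\}$ there exists $k \neq i$ with $\tau_k = \tau < \tau_i$, which by a union bound yields
\[
\Probx{\eps y}{E_i \cap \{\tau < \tau_i\}} \le \sum_{k \neq i} \Probx{\eps y}{\tau_k < \tau_i,\ Y^k_{\tau_i} \in [a^k,b^k]}.
\]
Since $|Y^k_{\tau_k}| = L$ but $[a^k,b^k] \subset (-L,L)$, each summand is the probability that the coordinate $Y^k$ \emph{returns} from the boundary of $[-L,L]$ into its interior during the interval $[\tau_k,\tau_i]$, against the outward exponential drift $\lambda_k Y^k$.

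The third step controls each return probability. By the strong Markov property at $\tau_k$ and the Duhamel identity \eqref{eq:Y_after_Duhamel},
\[
Y^k_{\tau_i} = Y^k_{\tau_k}\, e^{\lambda_k(\tau_i - \tau_k)} + R^k,
\]
where $R^k$ collects the stochastic integral and the $\eps^2$-drift contribution over $[\tau_k,\tau_i]$. The event $\{|Y^k_{\tau_i}| < L\}$ forces $|R^k| \ge L\bigl(e^{\lambda_k(\tau_i - \tau_k)} - 1\bigr)$, i.e., the noise must reverse an outward exponential deterministic growth. A Gaussian/Bernstein tail bound on the stochastic integral, relying on the boundedness of $F,G$ from \eqref{eq:modified_F}, then gives a conditional estimate of the form $\exp(-c/\eps^2)$, which is super-polynomially small in $\eps$ and hence $o(\eps^{\rho_i})$ for every index $i$.

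The main obstacle is to make this third step uniform in the random gap $\tau_i - \tau_k$. If this gap were tiny, the required deviation $L(e^{\lambda_k(\tau_i - \tau_k)} - 1)$ would be of order $\tau_i - \tau_k$ and the Gaussian bound would degrade. I would handle this by splitting into $\{\tau_i - \tau_k \ge \eps^{\beta}\}$ and its complement for a suitable small $\beta>0$: on the first event the deviation for $R^k$ is comfortably larger than the noise scale $\eps$, yielding the exponential bound; on the second event, $\tau_k$ and $\tau_i$ are atypically close, which is itself rare because each of them is concentrated near a deterministic time of the form $\lambda_k^{-1}\log \eps^{-1}$ and $\lambda_i^{-1}\log \eps^{-1}$ respectively (as in the heuristics around \eqref{eq:tau-in-simple-ex}), with concentration controlled by the same Gaussian-approximation ingredients that underlie Proposition~\ref{prop:Rectangle_set_on_box}. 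Summing the resulting $o(\eps^{\rho_i})$ bound over the finitely many $k \neq i$ completes the uniform estimate.
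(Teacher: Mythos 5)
Your first two steps coincide with the paper's proof: the identity $E=E_i\cap\{\tau=\tau_i\}$, the reduction to $\Prob{E_i\cap\{\tau<\tau_i\}}$, the union bound over $k\neq i$, and the strong Markov restart at $\tau_k$ are all exactly what the paper does. The gap is in your third step. Having restarted at $\tau_k$ with $|Y^k_{\tau_k}|=L$, the Duhamel form gives $Y^k_{\tau_k+t}=e^{\lambda_k t}\bigl(\pm L+\eps U^k_t\bigr)$, and the event you must exclude is $|Y^k_{\tau_i}|\le c_k:=|a^k|\vee|b^k|$, where $c_k<L$ \emph{strictly} because $[a^k,b^k]\subset(-L,L)$. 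Since $e^{\lambda_k t}\ge 1$, this forces $|{\pm L}+\eps U^k_t|\le c_k$ and hence $\eps|U^k_t|\ge L-c_k$, a \emph{fixed} positive deviation independent of the elapsed time $\tau_i-\tau_k$. You instead only use $|Y^k_{\tau_i}|<L$, which yields the deviation $L\bigl(e^{\lambda_k(\tau_i-\tau_k)}-1\bigr)$ that degenerates as the gap shrinks; the small-gap difficulty you then try to patch is entirely self-inflicted. With the correct threshold $L-c_k$, Chebyshev with any $p>\rho_i$ (the moments $\E{|U^k_t|^p}$ are bounded uniformly in $\eps$ and $t$ by \eqref{eq:modified_F} and BDG) gives $O(\eps^p)=\smallo{\eps^{\rho_i}}$, and the proof is finished — this is precisely the paper's argument.

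Your proposed patch for the small-gap regime does not work as stated, so the gap is genuine and not merely cosmetic. The exit times are \emph{not} concentrated at $\lambda_k^{-1}\log\eps^{-1}$ to the precision you need: one has $\tau_k=\lambda_k^{-1}\log\bigl(L/(\eps|y^k+U^k_{\tau_k}|)\bigr)$, whose fluctuations are $O(1)$ random variables with full support, not $o(1)$. For $k<i$ the event $\{0\le\tau_i-\tau_k\le\eps^{\beta}\}$ requires $|y^k+N_k|\lesssim\eps^{\lambda_k/\lambda_i-1}$, so its probability is of order $\eps^{\lambda_k/\lambda_i-1+\beta}$ (up to logarithms); since $\rho_i=\sum_{j<i}(\lambda_j/\lambda_i-1)$ is a sum over all $j<i$, a single exponent $\lambda_k/\lambda_i-1+\beta$ need not exceed $\rho_i$ once $i\ge 3$, and the $\beta$ that would be needed can exceed the range $\beta<1$ in which your Gaussian estimate on the first event is valid. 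So the splitting cannot close the argument in general; replace it with the uniform deviation $L-c_k$ as above.
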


\begin{proof}
Since
\[
\Prob{Y^j_\tau \in [a^j, b^j], \forallj;Y^i_\tau =L  }  = \Prob{Y^j_\ti \in [a^j, b^j], \forallj;Y^i_\ti =L ; \tau =\ti },
\]
it remains to estimate the right-hand side of 
\begin{multline*}
    \Prob{Y^j_\ti \in [a^j, b^j], \forallj;Y^i_\ti =L} - \Prob{Y^j_\ti \in [a^j, b^j], \forallj;Y^i_\ti = L;\tau = \ti  }\\
 = \Prob{Y^j_\ti \in [a^j, b^j], \forallj;Y^i_\ti = L;\ti > \tau  }.
\end{multline*}
Using the strong Markov property and setting $c_j=|a^j|\vee|b^j|$, we can bound it by
\begin{align*}
& \sum_{j\neq i} \Prob{|Y^j_\ti|\leq c_j, \ti > \tj} \\
    &\leq \sum_{j\neq i}  \EBig{\sum_{l = \pm L}\Ind{Y^j_\tj = l}\Probx{Y_\tj}{ |Y^j_{\tau_i}|\leq c_j}}
    \leq \sum_{j\neq i,\ l = \pm L}  \EBig{\Ind{Y^j_\tj = l}\Probx{Y_\tj}{ e^{\lj \ti}|l+\eps U^j_{\ti}|\leq c_j}}\\ 
    &\leq \sum_{j\neq i} \EBig{\Ind{Y^j_\tj = l}\Probx{Y_\tj}{ L-\eps |U^j_\ti|\leq c_j}}
    \leq\sum_{j\neq i} \eps^p(L-c_j)^{-p}\E{|U^j_\ti|^p} 
\end{align*}
for any $p>0$. Let $p >\rho_i$. 
By \eqref{eq:modified_F}, there is $C>0$ such that, for all $j$, almost surely,
\begin{align}\label{eq:<M>_and_|V|_bound}
\sup_{t\in[0,\infty)}\langle M^j \rangle_t\le C\quad \text{ and} \quad\sup_{t\in[0,\infty)}|V^j_t| \leq C.
\end{align}
This, along with  BDG inequality, implies that $\E{|U^j_\ti|^p}$ is bounded uniformly in $\eps$, and completes the proof.
\end{proof}

We will approximate $U_{\tau_i}$ by $U_{T_0}$, where $T_0$ is given \eqref{eq:def_T_0}. By \eqref{eq:Y_after_Duhamel},
\begin{align} \label{eq:TimeGaussianRelation}
L=|Y^i_\ti|=\eps e^{\li \ti}|y^i+U^i_\ti|, \quad\text{ or } \quad\ti = \frac{1}{\li}\log\frac{L}{\eps |y^i+U^i_\ti|}.
\end{align}
Now \eqref{eq:Y_after_Duhamel} and (\ref{eq:TimeGaussianRelation}) give
\begin{align*}
    Y^j_\ti = L^\fl \eps^{1-\fl}(y^j+U^j_\ti )|y^i+U^i_\ti|^{-\fl},
\end{align*}
which implies
\begin{multline}\label{eq:MainProbability}
\Prob{Y^j_\ti\in [a^j,b^j],\forallj;\ Y^i_\ti = L} = \Prob{Y^j_\ti\in [a^j,b^j],\forallj;\ Y^i_\ti > 0} \\ = \ProbBig{y^j+U^j_\ti \in L^{-\fl}\eps^{\fl - 1}|y^i+U^i_\ti|^\fl[a^j,b^j],\forallj;\ y^i+U^i_\ti>0}.
\end{multline}

Then, we compare $\tau_i$ with $T_0$ by showing that, for an appropriate choice of $\kappa$,
\begin{align}\label{eq:ti>t_0_whp}
    \Prob{\ti<T_0} = \Prob{|y^i+U^i_\ti| >(\log\eps^{-1})^{\kappa+1}} = \smallo{\eps^{\rho_i}}.
\end{align}
By \eqref{eq:<M>_and_|V|_bound} and the exponential martingale inequality (see Problem 12.10 in \cite{Bass}), the following holds uniformly in $|y|\leq K(\eps)$ and $\eps$ sufficiently small,
\begin{align*}
\Prob{\ti <T_0}& = \Prob{|y^i+U^i_\ti| >(\log\eps^{-1})^{\kappa+1}; \ti<T_0} \leq \Prob{|y^i+U^i_{ \ti\wedge T_0}|>(\log\eps^{-1})^{\kappa+1}}\\
& \leq \Prob{|M^i_{\ti \wedge T_0}|>( \log \eps^{-1})^{\kappa+1}-( \log \eps^{-1})^{\kappa}-C\eps}\\
&\leq \Prob{|M^j_{\ti \wedge T_0}|>\tfrac{1}{2}( \log \eps^{-1})^\kappa}\\
& \leq 2\exp\big(-(8C)^{-1}( \log \eps^{-1})^{2\kappa}\big).
\end{align*}
Therefore, it suffices to choose $\kappa$ large enough
(see Remark \ref{remark:eta_and_kappa}) to guarantee \eqref{eq:ti>t_0_whp}. 
So, with high probability, $\ti \geq T_0$. Let us choose  $\delta$ to satisfy
\begin{align*}
    0< \delta < 2\tfrac{\lambda_d}{\lambda_i}= 2\min_{j}\{ \tfl \} < 2.
\end{align*} 
Using the boundedness of  $F$, and $G$, we can write for some $C_\delta>0$:
\begin{multline}
\label{eq:quad-var--M}
\langle M^j \rangle_{\ti\vee T_0} - \langle M^j \rangle_{T_0}  = \int_{T_0}^{\ti \vee T_0}e^{-2\lj T_0}|F^j (Y_{s\wedge\tau})|^2 ds\\
\leq Ce^{-2\lj T_0} 
\leq C\eps^{2\fl}( \log \eps^{-1})^{2\fl(\kappa +1)} \leq C_\delta \eps^{2\fl-\delta}
\end{multline}
and
\begin{align}
\label{eq:bound-on-V}
| V^j_{\ti\vee T_0} -  V^j_{T_0} |
\leq Ce^{-\lj T_0}\leq C_\delta \eps^{\fl-\frac{1}{2}\delta}.
\end{align}
Then we can choose $\gamma_j>0$, $j\in\{1,2,\dots,d\}$ to satisfy, as anticipated in \eqref{eq:gamma_j_condition},
 \begin{align}
 \label{eq:cond-on-gamma}
     0\vee \Big(\fl-1\Big)<\gj <\fl - \frac{1}{2}\delta.
 \end{align} 
By the exponential martingale inequality, estimates \eqref{eq:quad-var--M}, \eqref{eq:bound-on-V}, and the second inequality in~\eqref{eq:cond-on-gamma},
 we have
\begin{align}\label{eq:U^j_ti_U^j_t_0_difference_est}
\begin{split}
    &\Prob{|U^j_{\ti \vee T_0} - U^j_{T_0}|>\eps^\gj} \leq \Prob{|M^j_{\ti \vee T_0} - M^j_{T_0}|>\tfrac{1}{2}\eps^\gj}+ \Prob{\eps|V^j_{\ti \vee T_0} - V^j_{T_0}|>\tfrac{1}{2}\eps^\gj}\\
    &\leq 2\exp\big(-\tfrac{1}{2C_\delta}\eps^{2\gj - 2\fl +\delta}\big)+ \Prob{C_\delta \eps^{\fl-\frac{1}{2}\delta+1}>\tfrac{1}{2}\eps^\gj}= \smallo{\eps^{\rho_i}}\text{, for all }j.
\end{split}
\end{align}

To see the upper bound in Lemma \ref{lemma:rough_est}, observe that
\begin{align*}
    &\Prob{Y^j_\tau\in [a^j,b^j],\forallj; Y^i_\tau = L} \leq \Prob{Y^j_\ti \in [a^j, b^j], \forallj;Y^i_\ti =L } + \smallo{\eps^{\rho_i}}\\
    &= \ProbBig{y^j+U^j_\ti \in L^{-\fl}\eps^{\fl - 1}|y^i+U^i_\ti|^\fl[a^j,b^j],\forallj;\ y^i+U^i_\ti>0}+ \smallo{\eps^{\rho_i}}\\
    &\leq \PP \Big\{ y^j+U^j_{\ti\vee T_0} \in L^{-\fl}\eps^{\fl - 1}|y^i+U^i_{\ti\vee T_0}|^\fl[a^j,b^j],\forallj;\\
    &\hspace{7cm} y^i+U^i_{\ti\vee T_0}\in\big(0,(\log\eps^{-1})^{\kappa+1}\big]\Big\}+ \smallo{\eps^{\rho_i}}\\
    &\leq \Prob{y+U_{T_0}\in B_+}+\smallo{\eps^{\rho_i}}
\end{align*}
where we used Lemma \ref{lemma:drop_tau_i} in the first inequality, \eqref{eq:MainProbability} in the identity,\eqref{eq:ti>t_0_whp}  in the second inequality, \eqref{eq:U^j_ti_U^j_t_0_difference_est} in the third inequality.

For the lower bound, we have
\begin{align*}
    &\Prob{Y^j_\tau\in [a^j,b^j],\forallj; Y^i_\tau = L}\\
    &\geq  \ProbBig{y^j+U^j_\ti \in L^{-\fl}\eps^{\fl - 1}|y^i+U^i_\ti|^\fl[a^j,b^j],\forallj;\quad y^i+U^i_\ti>0}- \smallo{\eps^{\rho_i}}\\
    &\geq \PP \Big\{ y^j+U^j_{\ti\vee T_0} \in L^{-\fl}\eps^{\fl - 1}|y^i+U^i_{\ti\vee T_0}|^\fl[a^j,b^j],\forallj;\\
    &\hspace{7cm} y^i+U^i_{\ti\vee T_0}\in\big(0,(\log\eps^{-1})^{\kappa+1}\big]\Big\}- \smallo{\eps^{\rho_i}}\\
    &\geq \PP \Big\{ y^j+U^j_{\ti\vee T_0} \in L^{-\fl}\eps^{\fl - 1}\Big(|y^i+U^i_{ T_0}|-\eps^\gi\Big)^\fl[a^j,b^j],\forallj;\\
    &\hspace{7cm}  y^i+U^i_{ T_0}\in\big(\eps^\gi,(\log\eps^{-1})^{\kappa+1}-\eps^\gi\big]\Big\}- \smallo{\eps^{\rho_i}}\\
    &\geq \Prob{y+U_{T_0}\in B_-}-\smallo{\eps^{\rho_i}}
\end{align*}
where we used Lemma \ref{lemma:drop_tau_i} and \eqref{eq:MainProbability} for the first inequality, \eqref{eq:ti>t_0_whp} for the second inequality, \eqref{eq:U^j_ti_U^j_t_0_difference_est} for the last two inequalities. We remark that in the penultimate inequality the factor $(|y^i+U^i_{ T_0}|-\eps^\gi)^\fl$ is well-defined on the event we consider. This completes our proof of Lemma \ref{lemma:rough_est}.
\end{proof}

In order to prove Lemma \ref{lemma:iteration}, we recall the density estimates obtained in~\cite[Lemma~4.1]{exit_time} for the same setup and assumptions as in the present paper. For a random variable $\mathcal{X}$ with its value in $\R^d$, its density, if exists, is denoted as $\rho_\mathcal{X}$. Since $U_t$ depends on $y$, we denote its density by $\rho^y_{U_t}$. 
\begin{Lemma}\label{Lemma:density_est}
     Consider \eqref{eq:Y_after_Duhamel} with $Y_0=\eps y$. Let  $\pp(x)=\sum_{j,k=1}^dx^\frac{\lj}{\lk}$ for $x\geq 0$ and
    \begin{align}\label{eq:def_Z}
    Z^j_t = \int_0^t e^{-\lj s}F^j_l(0)dW^l_s.
    \end{align}
   Then 
    \begin{enumerate}
        \item there is a constant $\theta>0$ such that for each $\upsilon \in (0,1)$ there are $C,c,\delta>0$ such that, for $\eps$ sufficiently small,
    \begin{align*}
        |\rho^y_{U_{T(\eps)}}(x)-\rho_{Z_{T(\eps)}}(x)|\leq C\eps^\delta\big(1+\pp(\eps^{1-\upsilon}|y|)\big) e^{-c|x|^2},\quad  x, y\in \R^d, 
    \end{align*}
        holds for all deterministic $T(\eps)$ with $1\leq T(\eps)\leq \theta\log\eps^{-1}$;
    \item for each $\theta'>0$, there are constants $C',c',\delta'$ such that, for $\eps$ sufficiently small,
    \begin{align*}
          |\rho_{Z_{T(\eps)}}(x)-\rho_{Z_\infty}(x)|\leq C'\eps^{\delta'} e^{-c'|x|^2},\quad  x\in \R^d, 
    \end{align*}
        holds for all deterministic $T(\eps)$ with $T(\eps)\geq \theta'\log\eps^{-1}$,
    \end{enumerate}
    
\end{Lemma}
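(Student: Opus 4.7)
My plan is to handle the two parts separately, with quite different tools.

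Part~(2) is a direct Gaussian computation. By It\^o isometry applied to \eqref{eq:def_Z},
\begin{align*}
\mathrm{Cov}(Z_{T(\eps)})^{jk} = \sum_{l=1}^n \frac{\sigma^j_l(0)\sigma^k_l(0)}{\lj+\lk}\bigl(1-e^{-(\lj+\lk)T(\eps)}\bigr),
\end{align*}
so $\|\mathrm{Cov}(Z_{T(\eps)}) - \Czero\| = O(e^{-2\ld T(\eps)}) = O(\eps^{2\theta'\ld})$ whenever $T(\eps)\ge\theta'\log\eps^{-1}$. Because $\sigma(0)$ is surjective, $\Czero$ is strictly positive definite, hence the centered Gaussian density $x\mapsto (2\pi)^{-d/2}(\det C)^{-1/2}e^{-\frac12 x^\intercal C^{-1}x}$ is smooth in $C$ in a neighborhood of $\Czero$. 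Differentiating this explicit formula in $C$ immediately yields $|\rho_{Z_{T(\eps)}}(x)-\rho_{Z_\infty}(x)| \le C'\eps^{\delta'}e^{-c'|x|^2}$ with $\delta'$ any positive number less than $2\theta'\ld$ and $c'$ slightly less than the smallest eigenvalue of $\tfrac12 \Czero^{-1}$.

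Part~(1) requires Malliavin calculus, since we are now comparing a genuinely non-Gaussian density to a Gaussian one. Differentiating the implicit relation \eqref{eq:Y_after_Duhamel} together with $Y_s = \eps e^{\lambda s}(y+U_s)$ via the chain rule, the Malliavin derivative $D_r U_t$ solves a linear stochastic integral equation whose leading part, for small $\eps$, is $e^{-\lj r}F^j_l(0)\Ind{r\le t}$, mirroring $D_r Z_t$ exactly. The standard integration-by-parts formula then produces a density representation of the form $\rho^y_{U_t}(x) = \EE[\Ind{U_t\ge x}\, H^y_t]$, with weight $H^y_t$ built out of $DU_t$, the inverse Malliavin covariance matrix $\gamma_{U_t}^{-1}$, and Skorokhod integrals; the same representation holds for $Z_t$ with $Z_t$ and the (deterministic) Malliavin matrix of $Z_t$ in place of their $U$ counterparts. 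Subtracting, the difference $\rho^y_{U_t}(x)-\rho_{Z_t}(x)$ reduces to expectations of the weights times the differences $U_t-Z_t$, $DU_t-DZ_t$, and $\gamma_{U_t}^{-1}-\gamma_{Z_t}^{-1}$. Each such difference is small in every $L^p$ because $F(Y_s)-F(0)=O(|Y_s|)$, $G$ is bounded, and $|Y_s|$ is controlled by $\eps^{1-\upsilon}|y|$ plus a sub-Gaussian fluctuation as long as $T(\eps)\le\theta\log\eps^{-1}$ with $\theta$ chosen small enough relative to $\upsilon/\lone$. The polynomial prefactor $\pp(\eps^{1-\upsilon}|y|)$ in the statement is precisely the bound that tracks this size of $Y_s$ in the resulting $L^p$ estimates, while the Gaussian tail $e^{-c|x|^2}$ is inherited from sub-Gaussian concentration of $U_t$ applied to $\Ind{U_t\ge x}$.

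The main obstacle is establishing uniform-in-$(\eps,y)$ non-degeneracy of $\gamma_{U_t}$ together with uniform bounds on negative moments of $\det\gamma_{U_t}$ for $t\ge 1$, which are what legitimize the integration by parts and furnish the required $L^p$ control on $\gamma_{U_t}^{-1}-\gamma_{Z_t}^{-1}$. Here the modified assumption \eqref{eq:modified_F} that $F(x)F(x)^\intercal$ is uniformly positive definite is essential: it forces the leading part of $\gamma_{U_t}$ to stay close to the uniformly invertible matrix $\mathrm{Cov}(Z_t)$ throughout the relevant time scale. Since the present setup and bound coincide verbatim with the statement and hypotheses of \cite[Lemma~4.1]{exit_time}, the cleanest route in practice is to quote that lemma directly; the sketch above outlines how one would reconstruct the proof if needed.
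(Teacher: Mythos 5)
Your proposal is correct and takes essentially the same route as the paper: the paper offers no proof of this lemma at all, simply recalling it from \cite[Lemma~4.1]{exit_time} as a result established there under the same setup and assumptions, which is exactly the citation you fall back on. Your direct covariance computation for part~(2) and your Malliavin-calculus outline for part~(1) are consistent with how the cited result is obtained.
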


We will derive the following result from Lemma~\ref{Lemma:density_est} and use it 
to prove Lemma~\ref{lemma:iteration}.

\begin{Lemma}\label{lemma:terminal-iteration}
For each $\upsilon\in(0,1)$, there is $\delta>0$ such that
\begin{align*}
    \sup_{|y|\leq \eps^{\upsilon-1}}\big|\Prob{y+U_{T_0}\in B_\pm} -  \Prob{y+Z_{T_0}\in B_\pm}\big|=\smallo{\eps^{\rho_i+\delta}}.
\end{align*}    
\end{Lemma}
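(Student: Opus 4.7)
The plan is to express the desired probability difference as an integral of a density difference, apply the density estimate of Lemma~\ref{Lemma:density_est}, and then bound the resulting Gaussian-weighted volume of $B_\pm - y$ by exploiting the product structure of $B_\pm$. Concretely, since $Z$ does not depend on the initial condition and $U_{T_0}$ admits the density $\rho^y_{U_{T_0}}$ (when $Y_0=\eps y$), I would write
\begin{align*}
\Prob{y+U_{T_0}\in B_\pm}-\Prob{y+Z_{T_0}\in B_\pm}=\int_{B_\pm-y}\big(\rho^y_{U_{T_0}}(x)-\rho_{Z_{T_0}}(x)\big)\,dx.
\end{align*}
By the definition \eqref{eq:def_T_0} of $T_0$, we have $1\le T_0(\eps)\le \theta\log\eps^{-1}$ for all sufficiently small $\eps$ with $\theta$ big enough, so part (1) of Lemma~\ref{Lemma:density_est} applies. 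For $|y|\le \eps^{\upsilon-1}$, the factor $\pp(\eps^{1-\upsilon}|y|)$ is bounded, yielding the pointwise estimate
\begin{align*}
|\rho^y_{U_{T_0}}(x)-\rho_{Z_{T_0}}(x)|\le C\eps^{\delta_0}e^{-c|x|^2}
\end{align*}
uniformly in $|y|\le \eps^{\upsilon-1}$ and $x\in\R^d$, for some $\delta_0,c>0$.

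It therefore remains to control $I(y):=\int_{B_\pm-y}e^{-c|x|^2}\,dx$ by $O(\eps^{\rho_i})$ up to a subpolynomial (logarithmic) factor. Using the product description of $B_\pm$ in~\eqref{eq:defOfB_pm} and Fubini with variables $(x^{<i},x^i,x^{>i})$, I would substitute $u=x^i+y^i$ to write
\begin{align*}
I(y)\le \int_{I_\pm} e^{-c(u-y^i)^2}\Bigl(\int_{B^{(u)}_{\pm,<i}-y^{<i}}e^{-c|x^{<i}|^2}dx^{<i}\Bigr)\Bigl(\int_{\R^{d-i}}e^{-c|x^{>i}|^2}dx^{>i}\Bigr)du.
\end{align*}
The $\R^{d-i}$-integral is a harmless constant, and the $<i$-integral is bounded by the Lebesgue measure $|B^{(u)}_{\pm,<i}|$, which by \eqref{eq:defOfB_pm} equals $\prod_{j<i}\bigl(C_j\eps^{\fl-1}(|u|\pm\eps^{\gi})^{\fl}+2\eps^{\gj}\bigr)$. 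Since the condition \eqref{eq:gamma_j_condition} gives $\gj>\fl-1$, each factor is bounded by $C\eps^{\fl-1}(1+|u|)^{\fl}$, so the product is at most $C\eps^{\rho_i}(1+|u|)^{\sum_{j<i}\fl}$. Because $I_\pm\subset[-1,(\log\eps^{-1})^{\kappa+1}+1]$, the polynomial factor is majorized by $C(\log\eps^{-1})^{P}$ for $P=(\kappa+1)\sum_{j<i}\fl$, and the residual Gaussian integral $\int_{I_\pm}e^{-c(u-y^i)^2}du$ is bounded by $\sqrt{\pi/c}$ irrespective of $y^i$. Combining, $I(y)\le C\eps^{\rho_i}(\log\eps^{-1})^{P}$, so
\begin{align*}
\sup_{|y|\le\eps^{\upsilon-1}}\bigl|\Prob{y+U_{T_0}\in B_\pm}-\Prob{y+Z_{T_0}\in B_\pm}\bigr|\le C\eps^{\rho_i+\delta_0}(\log\eps^{-1})^{P},
\end{align*}
which is $\smallo{\eps^{\rho_i+\delta}}$ for any $\delta<\delta_0$, completing the argument.

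The main obstacle is not any single estimate but the careful bookkeeping in the volume computation of $B^{(u)}_{\pm,<i}$: one must verify that $\eps^{\gj}$ is indeed dominated by $\eps^{\fl-1}$ for each $j<i$ (which is exactly the content of \eqref{eq:gamma_j_condition}), that the $|u|$-polynomial growth is absorbable by the one-dimensional Gaussian in the $i$-th coordinate regardless of the shift $y^i$ (true because $u$ ranges over a polylogarithmically small interval and the Gaussian integrates to a constant), and that the constraint $|y|\le\eps^{\upsilon-1}$ is used \emph{only} to kill the $\pp(\eps^{1-\upsilon}|y|)$ factor in the density bound—so that the resulting estimate is truly uniform in $y$ over that range.
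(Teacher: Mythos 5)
There is a genuine gap, and it sits exactly at the step you flag as unproblematic: you write that ``$1\le T_0(\eps)\le\theta\log\eps^{-1}$ for all sufficiently small $\eps$ with $\theta$ big enough, so part (1) of Lemma~\ref{Lemma:density_est} applies.'' But the quantifier structure of Lemma~\ref{Lemma:density_est}(1) is ``\emph{there is} a constant $\theta>0$ such that \dots the estimate holds for all $T(\eps)$ with $1\le T(\eps)\le\theta\log\eps^{-1}$'': the constant $\theta$ is produced by the Malliavin-calculus density estimate of \cite{exit_time} and is \emph{not} at your disposal. Since $T_0\sim\frac{1}{\lambda_i}\log\eps^{-1}$ by \eqref{eq:def_T_0}, nothing guarantees $T_0\le\theta\log\eps^{-1}$; if $\theta<1/\lambda_i$ the hypothesis of Lemma~\ref{Lemma:density_est}(1) simply fails at time $T_0$ and your pointwise bound on $|\rho^y_{U_{T_0}}(x)-\rho_{Z_{T_0}}(x)|$ is not available. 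This is precisely why the paper does not argue as you do: it sets $N=\min\{n:\ T_0/n\le\theta\log\eps^{-1}\}$, puts $t_k=\frac{k}{N}T_0$, and proves Lemma~\ref{lemma:iterative_est} by induction on $k$, using the Markov property of $Y$ to compare $U$ with $Z$ over each subinterval of admissible length $t_1\le\theta\log\eps^{-1}$, carrying along an auxiliary shift parameter $w$ (with $|w|\le\eps^{\upsilon'-1}$) so that the induction closes. Lemma~\ref{lemma:terminal-iteration} is then the case $k=N$, $w=0$. Your argument, as written, collapses this entire mechanism into a single application of the density bound that is not justified by the stated hypotheses.

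The remainder of your computation is sound and is in fact what the paper does where the density estimate \emph{is} applicable: the bound $\int_{B_\pm-y}e^{-c|x|^2}dx\le C\eps^{\rho_i}(\log\eps^{-1})^P$ via the product structure of $B_\pm$, the domination of $\eps^{\gj}$ by $\eps^{\fl-1}$ from \eqref{eq:gamma_j_condition}, and the absorption of the polylogarithmic range of $x^i$ all appear (in essentially this form) in the base case $k=1$ of Lemma~\ref{lemma:iterative_est} and in the comparison of $Z_{T_0}$ with $Z_\infty$. So the volume bookkeeping you identify as ``the main obstacle'' is the easy part; the missing idea is the iteration itself. To repair the proof you would either need to establish that the density estimate holds up to times $c\log\eps^{-1}$ for an arbitrarily large constant $c$ (which is not what Lemma~\ref{Lemma:density_est} asserts), or reproduce the $N$-step Markov decomposition.
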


Let us derive Lemma~\ref{lemma:iteration} from these lemmas first and prove 
Lemma~\ref{lemma:terminal-iteration} after that.

\begin{proof}[Proof of Lemma \ref{lemma:iteration}]

The definition~\eqref{eq:def_Z} implies that $Z_\infty$ is well-defined and has the same distribution as $\Zc$. The definition of $B_\pm$ in \eqref{eq:defOfB_pm} implies that there is $p>0$ such that for small $\eps$,
\begin{align}\label{eq:contain_B_pm}
    B_\pm \subset \prod_{j=1}^d \Big(\eps^{\fl -1}(\log \eps^{-1})^p[-1,1]\Big).
\end{align}

Since there is some $\theta'>0$ such that $T_0\geq \theta'\log\eps^{-1}$, by part (2) of Lemma \ref{Lemma:density_est} \and~\eqref{eq:contain_B_pm}, we obtain that, for $\eps$ sufficiently small, 
\begin{align*}
   |\Prob{y+Z_{T_0}\in B_\pm} -  \Prob{y+\Zc\in B_\pm}\big|=\smallo{\eps^{\rho_i+\delta'} (\log \eps^{-1})^{pd}},\quad\forall y\in\R^d.
\end{align*}
The above display and Lemma \ref{lemma:terminal-iteration} together imply the result of Lemma \ref{lemma:iteration}.
 \end{proof}

To prove 
Lemma~\ref{lemma:terminal-iteration}, we need some notation. 
For $\mathbf{v}\in \R^d$, $A\subset \R^d$ and $t\in \R$, we write $e^{\lam t}\mathbf{v} = (e^{\lj t}\mathbf{v}^j)_{j=1}^d\in\R^d$ and $e^{\lam t}A=\{e^{\lam t}x: x\in A \}\subset \R^d$. 

Recalling $T_0=T_0(\eps)$ from \eqref{eq:def_T_0} and $\theta$ from the statement of Lemma~\ref{Lemma:density_est}, we set $N=\min\{n\in\mathbb{N}:\frac{T_0}{n}\leq \theta\log\eps^{-1}, \forall \eps \in(0,1/2]\}$ and $t_k=\frac{k}{N}T_0$.

Lemma~\ref{lemma:terminal-iteration} is a specific case of the following result
with $k=N$ and $w=0$:

\begin{Lemma}\label{lemma:iterative_est}
For each $\upsilon\in(0,1)$, there is a constant $\upsilon'$ and constants $\eps_k,C_k,\delta_k$, $k=1,2,...,N$ such that
\begin{align}\label{eq:induction_step_conclusion}
    \sup_{|y|\leq \eps^{\upsilon-1}}\sup_{|w|\leq \eps^{\upsilon'-1}}\big|\Probx{\eps y}{y+U_{t_k}+e^{-\lam t_k}w\in B_\pm} -  \Prob{y+Z_{t_k}+e^{-\lam t_k}w\in B_\pm}\big|\leq C_k \eps^{\rho_i+\delta_k},
\end{align}
holds for all $k=1,2,..., N$ and $\eps\in(0,\eps_k]$.
\end{Lemma}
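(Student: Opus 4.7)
The plan is to prove \eqref{eq:induction_step_conclusion} by induction on $k\in\{1,\ldots,N\}$, choosing the auxiliary parameter $\upsilon'\in(0,\upsilon)$ small at the outset (precise value dictated by the analysis below). For the base case $k=1$, since $t_1=T_0/N\leq\theta\log\eps^{-1}$ by the construction of $N$, Lemma \ref{Lemma:density_est}(1) applied with parameter $\upsilon$ gives $|\rho^y_{U_{t_1}}(u)-\rho_{Z_{t_1}}(u)|\leq C\eps^\delta e^{-c|u|^2}$ uniformly on $|y|\leq\eps^{\upsilon-1}$, the $\pp$-factor being bounded there. Writing the probability difference as $\int_{B_\pm-y-e^{-\lam t_1}w}(\rho^y_{U_{t_1}}-\rho_{Z_{t_1}})(u)\,du$, the work reduces to checking that the Gaussian mass of $B_\pm-y-e^{-\lam t_1}w$ is of order $\eps^{\rho_i}$ up to logarithms: each direction $j<i$ contributes a factor $\eps^{\lj/\li-1}$ coming from the length of $B_\pm$ in that direction, direction $j=i$ supplies a logarithmic factor, and directions $j>i$ are absorbed by the Gaussian tail since the relevant $B_\pm$-centers are small. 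This establishes the base case with $\delta_1=\delta/2$.

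For the inductive step $k\to k+1$, I would apply the strong Markov property at time $t_k$. Setting $\tilde y=e^{\lam t_k}(y+U_{t_k})$, the Duhamel identity \eqref{eq:Y_after_Duhamel} furnishes the splitting $U_{t_{k+1}}=U_{t_k}+e^{-\lam t_k}U''_{t_1}$, where conditionally on $\mathcal F_{t_k}$ the process $U''$ is the analogue of $U$ for a fresh solution of \eqref{eq:Y_after_Duhamel} started at $\eps\tilde y$. The corresponding identity on the Gaussian side reads $Z_{t_{k+1}}=Z_{t_k}+e^{-\lam t_k}\widetilde Z_{t_1}$ with $\widetilde Z_{t_1}$ independent of $Z_{t_k}$ and distributed as $Z_{t_1}$. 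Conditioning on the final increment and adding and subtracting the hybrid quantity in which $U_{t_k}$ is retained but $U''_{t_1}$ is replaced by a Gaussian with density $\rho_{Z_{t_1}}$, the difference splits as
\begin{align*}
    \mathrm{(A)}&=\EE^{\eps y}\!\int\mathbf{1}\{y+U_{t_k}+e^{-\lam t_k}v\in B_\pm\}\bigl[\rho^{\tilde y}_{U''_{t_1}}-\rho_{Z_{t_1}}\bigr](v-e^{-\lam t_1}w)\,dv,\\
    \mathrm{(B)}&=\int\bigl[\PP^{\eps y}\{y+U_{t_k}+e^{-\lam t_k}v\in B_\pm\}-\PP\{y+Z_{t_k}+e^{-\lam t_k}v\in B_\pm\}\bigr]\rho_{Z_{t_1}}(v-e^{-\lam t_1}w)\,dv.
\end{align*}
Term $\mathrm{(B)}$ is handled by the inductive hypothesis applied to the integrand with the $w$-parameter there replaced by $v$: on $|v|\leq\eps^{\upsilon'-1}$ it is bounded by $C_k\eps^{\rho_i+\delta_k}$, and on the complementary region the Gaussian tail of $\rho_{Z_{t_1}}(\,\cdot\,-e^{-\lam t_1}w)$ yields super-polynomially small contribution (note $|e^{-\lam t_1}w|\leq|w|\leq\eps^{\upsilon'-1}$, so the center shift is modest). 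Term $\mathrm{(A)}$ is estimated via Lemma \ref{Lemma:density_est}(1) applied to $U''_{t_1}$, using the high-probability event $\{|U_{t_k}|\leq(\log\eps^{-1})^p\}$ (supplied by the exponential martingale inequality together with \eqref{eq:<M>_and_|V|_bound}) to control $|\tilde y|\leq e^{\lambda_1 t_k}(|y|+(\log\eps^{-1})^p)$; off this event, the exponential tail of $U_{t_k}$ contributes super-polynomially small error.

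The main obstacle is the fine balance inside $\mathrm{(A)}$: the restart state $\tilde y$ can be as large as $\eps^{\upsilon-1-\lambda_1/\lambda_i}$, so the polynomial prefactor $\pp(\eps^{1-\upsilon''}|\tilde y|)$ appearing in Lemma \ref{Lemma:density_est}(1) threatens to consume the gain $\eps^\delta$ if $\upsilon''$ is chosen naively. The resolution rests on two features: first, $N$ is a fixed integer independent of $\eps$, so the polynomial loss incurred is a fixed negative power of $\eps$ and can be absorbed by choosing $\upsilon''$ close enough to $1$ and exploiting the freedom in $\delta$; second, for typical $U_{t_k}$ the point $z=y+U_{t_k}+e^{-\lam t_{k+1}}w$ lies far from $B_\pm$ in directions $j<i$, forcing the Gaussian in $\mathrm{(A)}$ to be integrated far from its center and therefore to contribute an $\eps^{\rho_i}$-size factor, while the rare configurations where $z$ actually hits $B_\pm$ have probability of that same order. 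Setting $\delta_{k+1}<\min(\delta_k,\delta-\eta)/2$ where $\eta$ is the net polynomial loss, and $C_{k+1}$ large enough to absorb constants and super-polynomially small tail contributions, closes the induction at $k=N$, which combined with $w=0$ yields Lemma \ref{lemma:terminal-iteration}.
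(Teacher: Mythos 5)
Your overall architecture is the same as the paper's: induction along the grid $t_1,\dots,t_N$, Markov property at the penultimate grid time so that the fresh segment has length $t_1\le\theta\log\eps^{-1}$, and a two-term split in which one term (your $\mathrm{(B)}$, the paper's \eqref{eq:replacement-2}) is closed by the induction hypothesis after integrating out the fresh Gaussian increment, and the other (your $\mathrm{(A)}$, the paper's \eqref{eq:replacement-1}) is closed by the density estimate of Lemma~\ref{Lemma:density_est}(1). Your base case and your treatment of $\mathrm{(B)}$ are essentially correct and match \eqref{eq:induction_assumption_satisfied}--\eqref{eq:replacement-2}. The gap is in $\mathrm{(A)}$, exactly at the point you call the main obstacle, and your proposed resolution there does not work. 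In Lemma~\ref{Lemma:density_est}(1) the exponent $\delta$ is an unspecified small positive number that the lemma \emph{outputs} for a given choice of the parameter; there is no ``freedom in $\delta$'' to enlarge it. Meanwhile $|\tilde y|\le e^{\lambda_1 t_k}(|y|+|U_{t_k}|)$ can be of order $\eps^{\upsilon-1-\lambda_1/\lambda_i}$ (up to logarithms), so $\eps^{1-\upsilon''}|\tilde y|\sim\eps^{\upsilon-\upsilon''-\lambda_1/\lambda_i}$, which blows up for every $\upsilon''\in(0,1)$ whenever $i\ge2$ (the only nontrivial case, since $\rho_1=0$); taking $\upsilon''$ close to $1$ makes it \emph{worse}, not better. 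Thus $\pp(\eps^{1-\upsilon''}|\tilde y|)$ is a fixed negative power of $\eps$ determined by the ratios $\lambda_j/\lambda_k$, and it cannot be absorbed by $\eps^{\delta}$.

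The mechanism that actually saves the argument --- and the reason the paper rescales the target set to $e^{\lam t_{k}}B_\pm$ and imposes the condition $\upsilon'<\frac1N\lambda_d/\lambda_i$ of \eqref{eq:condition_upsilon'}, a condition you never derive --- is that large $|\tilde y|$ is harmless \emph{on the support of the integrand}: the constraint $\tilde y+(\cdots)\in e^{\lam t_{k}}B_\pm$ together with $e^{\lam t_{k}}B_\pm\subset\{|x|\le\eps^{\upsilon'-1}\}$ and $|e^{-\lam t_1}w|\le\eps^{\upsilon'-1}$ forces $|\tilde y|\le|v|+2\eps^{\upsilon'-1}$, so the polynomial prefactor is dominated by a polynomial in $|v|$ plus a constant and is swallowed by the Gaussian factor $e^{-c|v|^2}$; this is precisely \eqref{eq:|z(u)|_est}. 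Your second heuristic (``typical $U_{t_k}$ lies far from $B_\pm$ in directions $j<i$'') gestures at the right geometry but is not this statement and is left unquantified. Moreover, even after the prefactor is tamed you still owe a factor $\eps^{\rho_i}$ from $\EE\int\mathbf 1\{\cdot\in B_\pm\}e^{-c|v|^2}dv$; the paper obtains it by recognizing this quantity as $C\,\PP\{y+U_{t_{k}}+e^{-\lam t_{k}}(\cdots+\NN)\in B_\pm\}$ for an independent Gaussian $\NN$ and invoking the induction hypothesis a \emph{second} time together with $|B_\pm|=\mathcal O(\eps^{\rho_i}(\log\eps^{-1})^{pd})$. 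As written, your estimate of $\mathrm{(A)}$ does not close the induction.
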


\begin{proof}[Proof of Lemma \ref{lemma:iterative_est}]
First, let us choose $\upsilon'\in(0,1)$ to satisfy
\begin{align}\label{eq:condition_upsilon'}
    \frac{1}{N}\fl \geq \frac{1}{N}\frac{\lambda_d}{\li} >\upsilon', \quad \text{ for all }j=1,2,...,d. 
\end{align}

For the case $k=1$, Lemma \ref{Lemma:density_est} and \eqref{eq:contain_B_pm} imply that
\begin{align*}
     &\sup_{|y|\leq \eps^{\upsilon-1}}\sup_{|w|\leq \eps^{\upsilon'-1}}\big|\Probx{\eps y}{y+U_{t_1}+e^{-\lam t_1}w\in B_\pm} -  \Prob{y+Z_{t_1}+e^{-\lam t_1}w\in B_\pm}\big|\\
     &\leq \sup_{|y|\leq \eps^{\upsilon-1}}\sup_{|w|\leq \eps^{\upsilon'-1}} \int_{\{x\in\R^d:y+x+e^{-\lam t_1}w\in B_\pm\}}C\eps^\delta\big(1+\pp(\eps^{1-\upsilon}|y|)\big)e^{-c|x|^2}dx\leq C \eps^\delta |B_\pm|\leq C_1\eps^{\rho_i+\delta_1}
\end{align*}
for some $C_1, \delta_1>0$.

We proceed by induction. 
Let $k\leq N$ and let us assume that \eqref{eq:induction_step_conclusion} holds for $k-1$.

Set $z(u)= e^{\lam t_{k-1}}(y+u)$. The Markov property of $Y_t$ implies that 
\begin{align*}
    \begin{split}
        &\Probx{\eps y}{y+U_{t_k}+e^{-\lam t_k}w\in B_\pm}  =  \Probx{\eps y}{Y_{t_k}+\eps w\in \eps e^{\lam t_k}B_\pm} \\ 
        &= \EX{\eps y}{\Probx{ Y_{t_{k-1}}}{Y_{t_1}+\eps w\in \eps e^{\lam t_k}B_\pm}} \\
        & = \EXBig{\eps y}{\Probx{ \eps z(u)}{z(u)+U_{t_1}+e^{-\lam t_1}w\in e^{\lam t_{k-1}}B_\pm}\big|_{u=U_{t_{k-1}}}}.
    \end{split}
\end{align*}

To check~\eqref{eq:induction_step_conclusion} for $k$ and complete the induction
step, we must show that
the error caused by replacing $U_{t_1}$ by $Z_{t_1}$ and $U_{t_{k-1}}$ by $Z_{t_{k-1}}$ in this expression is small.
More precisely,  \eqref{eq:induction_step_conclusion} for $k$ will follow immediately
once we prove that there are $\eps_k,\delta',\delta''>0$ such that
the following relations hold uniformly in $|y|\leq\eps^{\upsilon-1}$, $|w|\leq \eps^{\upsilon'-1}$  and $\eps \in(0,\eps_k]$:
\begin{equation}
\label{eq:replacement-1}
|\E^{\eps y}A_\eps(U_{t_{k-1}},w)- \E^{\eps y} B_\eps(U_{t_{k-1}},w)|=o(\eps^{\rho_i+\delta'})
\end{equation}
and
\begin{equation}
\label{eq:replacement-2}
\big|\E^{\eps y}{ B_\eps(U_{t_{k-1}},w)}-C_\eps(y,w)\big|
=o(\eps^{\rho_i+\delta''}),
\end{equation}
where
\begin{align*}
A_\eps(u,w)&=\Probx{ \eps z(u)}{z(u)+U_{t_1}+e^{-\lam t_1}w\in e^{\lam t_{k-1}}B_\pm},
\\
B_\eps(u,w)&=\Prob{z(u)+Z_{t_1}+e^{-\lam t_1}w\in e^{\lam t_{k-1}}B_\pm},
\\
C_\eps(y,w)&=\Prob{y+Z_{t_k}+e^{-\lam t_k}w\in B_\pm}.
\end{align*}

Let us derive \eqref{eq:replacement-1}.
Due to part (1) of Lemma \ref{Lemma:density_est}, there are $\delta', C',c'>0$ such that
\begin{align}\label{eq:|A_eps(u,w)-B_eps(u,w)|_est}
|A_\eps(u,w)-B_\eps(u,w)|\leq \int_{\{x\in\R^d:z(u)+x+e^{-\lam t_1}w\in e^{\lam t_{k-1}}B_\pm\}}C'\eps^{\delta'}\big(1+\pp(\eps^{1-\upsilon'}|z(u)|)\big)e^{-c'|x|^2}dx.
\end{align}
By \eqref{eq:condition_upsilon'}, we have, for $\eps$ sufficiently small,
\begin{align*}
    e^{\lj t_{k-1}}\eps^{\fl -1}(\log\eps^{-1})^p\leq e^{\lj t_{N-1}}\eps^{\fl -1}(\log\eps^{-1})^p \leq  \eps^{\frac{1}{N}\fl -1 }(\log\eps^{-1})^p <  \eps^{\upsilon'-1}.
\end{align*}

This, together with \eqref{eq:contain_B_pm}, implies that there is a constant $C>0$, 
such that\\
if $z(u)+x+e^{-\lam t_1}w\in e^{\lam t_{k-1}}B_\pm$  and $|w|\leq \eps^{\upsilon'-1}$, then
\begin{align}\label{eq:|z(u)|_est}
    \eps^{1-\upsilon'}|z(u)| \leq C+\eps^{1-\upsilon'}|x|.
\end{align}
Using $e^{-c'|x|^2}$ to absorb polynomials of $|x|$, from \eqref{eq:|A_eps(u,w)-B_eps(u,w)|_est} and \eqref{eq:|z(u)|_est} we obtain, for some $C,c>0$,
\begin{align*}
     |A_\eps(u,w)-B_\eps(u,w)|\leq \eps^{\delta'}\int_{\{x\in\R^d:z(u)+x+e^{-\lam t_1}w\in e^{\lam t_{k-1}}B_\pm\}}Ce^{-c|x|^2}dx, \qquad |w|\leq \eps^{\upsilon'-1}.
\end{align*}
Let $\NN$ be a centered Gaussian with density proportional to $e^{-c|x|^2}$ and independent of~$\mathcal{F}_{t_{k-1}}$. The above display implies that if $|w|\leq \eps^{\upsilon'-1}$, then
\begin{align*}
|\E^{\eps y}A_\eps(U_{t_{k-1}},w)- \E^{\eps y} B_\eps(U_{t_{k-1}},w)|
        \leq C\eps^{\delta'}\Probx{\eps y}{y+U_{t_{k-1}}+e^{-\lam t_k}w+e^{-\lam t_{k-1}}\NN \in B_\pm}.
\end{align*}
Each entry of $e^{-\lambda t_1}$ decays like a small positive power of $\eps$. So, for small $\eps$,
\begin{align}\label{eq:induction_assumption_satisfied}
    |w| \leq \eps^{\upsilon'-1} \quad\text{ implies }\quad |e^{-\lam t_1}w|+ \log\eps^{-1}\leq \eps^{\upsilon'-1}.
\end{align}

Therefore,
\begin{align*}\begin{split}
&|\E^{\eps y}A_\eps(U_{t_{k-1}},w)- \E^{\eps y} B_\eps(U_{t_{k-1}},w)|
\\
        &\leq C\eps^{\delta'}\Probx{\eps y}{y+U_{t_{k-1}}+e^{-\lam t_{k-1}}(e^{-\lam t_1}w+\NN) \in B_\pm;\ |\NN|\leq  \log\eps^{-1}} + \smallo{\eps^{\rho_i+\delta'}}\\
    &\leq C\eps^{\delta'}\Prob{y+Z_{t_{k-1}}+e^{-\lam t_{k-1}}(e^{-\lam t_1}w+\NN)\in B_\pm}+\smallo{\eps^{\rho_i+\delta_{k-1}+\delta'}}+\smallo{\eps^{\rho_i+\delta'}}\\
    & = \smallo{\eps^{\rho_i+\delta'}}, 
\end{split}
\end{align*}
uniformly in $|y|\leq \eps^{\upsilon-1}$ and $|w|\leq \eps^{\upsilon'-1}$.
Here, in the second inequality we used the induction assumption allowed by \eqref{eq:induction_assumption_satisfied}, independence of $\NN$, Fubini's theorem, and the superpolynomial decay of $\Prob{|\NN|> \log\eps^{-1}}$. In the last line we used \eqref{eq:contain_B_pm}, the uniform boundedness of the density of $Z_{t_{k-1}}$, independence of $\NN$ and Fubini's theorem. This completes the proof of \eqref{eq:replacement-1}.

Let us now prove \eqref{eq:replacement-2}.  Let $\tZ_{t_1}$ be a copy of $Z_{t_1}$ independent of $\mathcal{F}_{t_{k-1}}$. 
The following holds uniformly in $|y|\leq \eps^{\upsilon-1}$ and $|w|\leq\eps^{\upsilon'-1}$:
\begin{align*} 
&\E^{\eps y} B_\eps(U_{t_{k-1}},w)
\\
    &= \Probx{\eps y}{y+U_{t_{k-1}}+e^{-\lam t_{k-1}}(e^{-\lam t_1}w+\tZ_{t_1})\in B_\pm}\\
    & = \Probx{\eps y}{y+U_{t_{k-1}}+e^{-\lam t_{k-1}}(e^{-\lam t_1}w+\tZ_{t_1})\in B_\pm;\ |\tZ_{t_1}|\leq  \log\eps^{-1}}+\smallo{\eps^{\rho_i+\delta'}}\\
    &= \Prob{y+Z_{t_{k-1}}+e^{-\lam t_{k-1}}(e^{-\lam t_1}w+\tZ_{t_1})\in B_\pm;\ |\tZ_{t_1}|\leq  \log\eps^{-1}}+\smallo{\eps^{\rho_i+\delta_{k-1}}}+\smallo{\eps^{\rho_i+\delta'}}\\
    &= \Prob{y+Z_{t_{k-1}}+e^{-\lam t_{k-1}}\tZ_{t_1}+e^{-\lam t_k}w\in B_\pm}+\smallo{\eps^{\rho_i+\delta_{k-1}\wedge \delta'}},
    \\
    &=C_\eps(y,w) +\smallo{\eps^{\rho_i+\delta_{k-1}\wedge \delta'}},
\end{align*}
where we used the induction assumption in the third identity allowed by \eqref{eq:induction_assumption_satisfied}, independence of $\tZ_{t_1}$ and Fubini's theorem. In the last line, we used the identity in distribution between $Z_{t_{k-1}}+e^{-\lam t_{k-1}}\tZ_{t_1}$ and $Z_{t_k}$.
This proves~\eqref{eq:replacement-2} with $\delta''=\delta_{k-1}\wedge \delta'$
completing the induction step and the entire proof.
\end{proof}

\section{Extension to a general domain}\label{section:extension}

\subsection{Proof of Proposition \ref{Prop:geometry_of_pull_back} }\label{subsection:geometry_pullback}
We use the notation introduced in \eqref{eq:def_M^k}--\eqref{eq:def_of-zeta}.

Let us first prove the inequality $L'_A>0$ and part (1). The assumption $i(A)=i$ together with definitions \eqref{eq:def_M^k} and \eqref{eq:def_of-zeta} implies that for any $L<L(O)$, $\overline{\zeta_L(A)}\cap \Lambda^{i-1}=\emptyset$. Let us fix any $L_0<L(O)$. 
Since the set $\overline{\zeta_{L_0}(A)}\subset \partial \BB_{L_0}$ is compact, we can find $r>0$ such that $\max\{|y^m|: m>i-1\}/|y|\ge r$ for all
$y\in \overline{\zeta_{L_0}(A)}$. Let us choose $t_0>0$ such that for all $t>t_0$ the following holds:
if $j\le i-1\le m$ and $|x^m|\le |x^j|$, then $|e^{\lambda_m t}x^m|/|e^{\lambda_j t}x^j|<r$.  Now if $L$ is small
enough to ensure that $\bar S_{t_0}\BB_L \subset \BB_{L_0}$, then 
for every $j\le i-1$ and every $x\in \FF_L^j$, we are guaranteed that the orbit of $x$ under $\bar S$
intersects~$\partial \BB_{L_0}$ at a point $y$ satisfying $\max\{|y^m|: m>i-1\}/|y|\le \max\{|y^m|: m>i-1\}/|y^j|< r$,  so  $y\notin\overline{\zeta_{L_0}(A)}$ and  thus $x\notin \overline{\zeta_L(A)}$, which completes the proof of $L'_A>0$ and
part~(1). 

To prove part (\ref{item:2_of_prop}), we fix $L<L'_A$ arbitrarily and recall that $\indset(A)=i$. It suffices to define
\begin{align*}
B&=\{x\in \partial\BB_L:\ |x_j|\le L/2 \text{\rm\ for all\ } j>i\},\\
C&=\{x\in \partial\BB_L:\ |x_j|> L/2 \text{\rm\ for some\ } j>i\}=\partial\BB_L\sm B,\\
A_0& = \zeta_L^{-1}(\zeta_L(A)\cap B)=A\cap \zeta_L^{-1}(B),\\
A_1&=\zeta_L^{-1}(C).
\end{align*}
Property (2a) is obvious from the construction. The first of (2b) and (2c) hold due to \eqref{item:equiv-index} of Lemma \ref{lemma:Hyperbolic_Pull_Back} and the construction. The second item of (2b) follows from the construction, and the last one follows from part (1) and the definition of $B$. Lastly, the $N$-regularity of $A_0$ and $A_1$ can be verified through the construction and \eqref{item:equiv-Lambda-regular} of Lemma~\ref{lemma:Hyperbolic_Pull_Back}.

\subsection{Proof of Proposition \ref{Prop:measure-theoretical_prop_of_pull-back}}\label{subsection:error_of_pullback} We
recall the definitions of  $\tau$ and $\tau_L$ given in~\eqref{eq:def_tau} and \eqref{eq:def_tau_L}. The goal is to show that the asymptotics of $\Prob{X_\tau \in A}$ is exactly captured by that of $\Prob{X_{\tau_L} \in f^{-1}\circ\zeta_L(A)}$ for suitable $A\subset \partial \DD$, by which we are able to prove Proposition \ref{Prop:measure-theoretical_prop_of_pull-back}.

To this end, we need to approximate $\zeta_L(A)$. Simply taking a small neighborhood of that set makes it difficult to verify the continuity with respect to the measure $\HH^{i-1}(\,\cdot\,\cap \FF^i_{L,\delta}\cap\Lambda^i )$, which is required in Proposition \ref{Prop:Box_Case}. Hence, the following lemma is needed. We recall the definition of $\dist(\cdot,\cdot)$ in~\eqref{eq:dist-def}.

\begin{Lemma}\label{lemma:existence_of_delta_set} Let $d\ge 2$.
Let $A\subset \partial \DD$ be $N$-regular with $\indset(A)=i$. For $L< L_0$ defined in Proposition~\ref{Prop:Box_Case}, let  $B=\zeta_L(A)$ and assume that $\overline{B}\subset \Int_{L}{\FF^i_L}$. Then,  there are two families of Borel sets $(B_\delta)_{\delta>0}$ and $(B_{-\delta})_{\delta>0}$ with the following properties:
 \begin{enumerate}
     \item $B_{-\delta} \subset B \subset B_\delta\subset \FF^i_{L,\delta_1}$ for some $\delta_1 >0$ and all $\delta>0$; \label{item:one}
     \item $\lim_{\delta \to 0}\HH^{i-1}(B_{\pm \delta}\cap \Lambda^i)=\HH^{i-1}(B\cap \Lambda^i)$
     for all $\delta>0$; \label{item:two} 
     \item $B_{\pm \delta}$ are finite unions of rectangles described in Proposition~\ref{Prop:Box_Case}, whose interiors are pairwise disjoint;\label{item:three}
     \item $\mathrm{dist}\big(\partial\BB_L \sm B_\delta, B\big)>0$  and $\mathrm{dist}\big(\partial \BB_L \sm B, B_{-\delta}\big)>0$ for all $\delta>0$.\label{item:four}
 \end{enumerate}
\end{Lemma}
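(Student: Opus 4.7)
The plan is a grid-based inner/outer approximation of $B$ on the face $\FF^i_L$. The $N$-regularity of $A$ translates via Lemma~\ref{lemma:Hyperbolic_Pull_Back}\,(\ref{item:equiv-Lambda-regular}) to $\HH^{i-1}(\partial_L B\cap \Lambda^i)=0$, and this single fact is what will make the slice measures converge. Since $\overline B$ is compact and $\overline B\subset \Int_L\FF^i_L$, I first fix $\delta_1>0$ with $\overline B\subset \FF^i_{L,2\delta_1}$; this is the $\delta_1$ to be used in (\ref{item:one}). Then for every $\delta>0$ I will produce $B_\delta$ and $B_{-\delta}$ as finite unions of cells of a rectangular grid on $\FF^i_L$, restricted to $\FF^i_{L,\delta_1}$.

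For each $\delta\in(0,\delta_1)$ I pick a mesh $h=h(\delta)\le \delta/(4\sqrt{d-1})$ and introduce an axis-aligned grid in the $d-1$ tangent directions of $\FF^i_L$, shifted so that in every coordinate $j>i$ the value $0$ lies strictly in the interior of one grid interval. This shift guarantees that each cell is a closed product of intervals of the format required in Proposition~\ref{Prop:Box_Case}: fixed value $\pm L$ in coordinate $i$, and endpoints different from $0$ in coordinates $j>i$. I then set
\begin{align*}
B_\delta = \bigcup\bigl\{R : R\cap \overline{N_h(\overline B)}\ne\emptyset\bigr\}, \qquad B_{-\delta} = \bigcup\bigl\{R : N_h(R)\cap \partial\BB_L\subset B\bigr\},
\end{align*}
with $R$ ranging over the grid cells contained in $\FF^i_{L,\delta_1}$ and $N_h(\cdot)$ denoting the open $h$-neighborhood in $\R^d$. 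Properties (\ref{item:one}) and (\ref{item:three}) are immediate from the construction, and (\ref{item:four}) follows quickly: cells outside $B_\delta$ lie at ambient distance $\ge h$ from $\overline B$ along the face, while points of $\partial\BB_L$ on a different face lie at distance $\ge \delta_1$ from $\overline B\subset \FF^i_{L,2\delta_1}$; cells in $B_{-\delta}$ are $h$-far from $\partial\BB_L\setminus B$ by construction.

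The substance of the lemma is (\ref{item:two}). By construction
\begin{align*}
\overline B \subset B_\delta \subset \overline{N_{2h\sqrt{d-1}}(\overline B)}\cap \partial\BB_L, \qquad B^\circ_{2h\sqrt{d-1}} \subset B_{-\delta} \subset B,
\end{align*}
where $B^\circ_\rho = \{x\in B : \dist_{\partial\BB_L}(x,\partial\BB_L\setminus B)\ge \rho\}$. As $\delta\to 0$ the outer dilations decrease to $\overline B\cap \partial\BB_L$, and $B^\circ_{2h\sqrt{d-1}}$ increases to $\Int_L B$. Since $\HH^{i-1}$ is finite on $\partial\BB_L$, continuity of measure along monotone sequences gives
\begin{align*}
\HH^{i-1}(B_\delta\cap \Lambda^i)\to \HH^{i-1}(\overline B\cap \Lambda^i), \qquad \HH^{i-1}(B_{-\delta}\cap \Lambda^i)\to \HH^{i-1}(\Int_L B\cap \Lambda^i).
\end{align*}
Both right-hand sides coincide with $\HH^{i-1}(B\cap \Lambda^i)$ because $(\overline B\setminus \Int_L B)\cap \Lambda^i \subset \partial_L B\cap \Lambda^i$, which has $\HH^{i-1}$-measure zero by $N$-regularity; this is the only place that hypothesis enters the proof.

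The main obstacle I anticipate is keeping all of the bookkeeping compatible at once: the coordinate-by-coordinate grid shift needed for the endpoint condition in Proposition~\ref{Prop:Box_Case}, the stable containment $B_\delta\subset \FF^i_{L,\delta_1}$ as $\delta\to 0$ (which forces the grid restriction described above and the mesh bound $h<\delta_1$), and the subtle point that the ambient $\R^d$-distance used in the definition of $N_h(\cdot)$ coincides with the intrinsic distance on $\partial\BB_L$ only as long as grid cells do not reach an adjacent face of $\BB_L$. The last point is a mild geometric observation: any point of $\partial\BB_L$ within ambient distance $<\delta_1$ of a cell in $\FF^i_{L,\delta_1}$ must itself lie in $\FF^i_L$, which is why the constraint $h<\delta_1$ suffices.
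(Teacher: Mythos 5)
Your construction is correct, and it takes a genuinely different route from the paper. The paper splits into the cases $i=1$, $i=d$, and $1<i<d$, and in the main case works intrinsically in the $(i-1)$-dimensional slice $Q=\FF^i_{L+,\delta_1}\cap\Lambda^i$: it covers $\overline B\cap Q$ by a finite union of open rectangles $G_\delta$ whose excess measure over $B\cap Q$ is forced to be $<\delta$ (outer regularity of Lebesgue measure on $Q$, using $\HH^{i-1}(\partial_L B\cap Q)=0$), then thickens the leftover set $Q\sm G_\delta$ transversally by a small amount $h(\delta)$ to form a ``collar'' $K_\delta$, and sets $B_\delta=\FF^i_{L+,\delta_1}\sm K_\delta$; the inner family comes from running the same construction on the complement $\FF^i_{L+,\delta_1}\sm B$. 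So the paper's $B_\delta$ is nearly the whole face and property~(\ref{item:two}) is built in by fiat, while the transversal thickening is what delivers property~(\ref{item:four}). You instead build both families at once from a single shifted grid on the full $(d-1)$-dimensional face, uniformly in $i$, and recover property~(\ref{item:two}) a posteriori by squeezing $B_{\pm\delta}\cap\Lambda^i$ between the monotone families $\overline{N_\rho(\overline B)}\cap\Lambda^i$ and $B^\circ_\rho\cap\Lambda^i$ and invoking continuity of the finite measure $\HH^{i-1}$ on $Q$ along monotone limits; the $N$-regularity enters only to identify $\HH^{i-1}(\overline B\cap\Lambda^i)=\HH^{i-1}(\Int_L B\cap\Lambda^i)=\HH^{i-1}(B\cap\Lambda^i)$. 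Your version buys uniformity (no case split, and the same argument covers $i=1$ and $i=d$, where the paper argues separately) and produces approximants that are actual neighborhoods of $B$; the paper's version avoids any appeal to continuity from above/below by controlling the slice measure directly at the level of the cover. The small housekeeping points you flag (grid shift for the nonzero-endpoint condition in Proposition~\ref{Prop:Box_Case}, the constraint $h\le\delta_1$ keeping cells inside $\FF^i_{L,\delta_1}$ and keeping ambient $h$-neighborhoods of cells on the same face, and extending the families constantly for $\delta\ge\delta_1$) are all handled correctly or are trivially repairable.
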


The next result shows that  $\Prob{f(X_{\tau_L})\in   (\zeta_L(A))_{\pm\delta}}$ is 
a very good approximation for~$\Prob{X_\tau \in A}$.

\begin{Lemma} \label{lemma:Est_Pull_Back}
Let $L<L_0$. For each $A\subset \partial \DD$ as in Lemma \ref{lemma:existence_of_delta_set}, there is $\delta_0>0$ depending on $A$ such that for each $\delta\in(0,\delta_0)$
\begin{align}\label{eq:in_lemma:Est_Pull_Back}
\mathcal{O}(e^{-C_\delta\eps^{-2}}) + \Prob{f(X_{\tau_L}) \in (\zeta_L(A))_{-\delta}}
\leq \Prob{X_\tau \in A}
\leq 
 \Prob{f(X_{\tau_L}) \in (\zeta_L(A))_\delta}  +  \mathcal{O}(e^{-C_\delta\eps^{-2}}),
\end{align}
as $\eps \to 0$, for some $C_\delta>0$ depending on $\delta$.
\end{Lemma}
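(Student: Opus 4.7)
The plan is to exploit the fact that between $\tau_L$ and $\tau$ the noise has very little room to act. After leaving $f^{-1}(\BB_L)$, the process travels to $\partial\DD$ in uniformly bounded deterministic time, so a Freidlin--Wentzell large deviation estimate will show that $X$ shadows the flow $(S_t)$ with an exponentially small probability of error. Pulling this shadowing across the bi-Lipschitz Poincar\'e map $\zeta_L=f\circ\psi_L^{-1}$ will let me identify $\{X_\tau\in A\}$ with $\{f(X_{\tau_L})\in\zeta_L(A)\}$ up to small pull-back error, and property~(\ref{item:four}) of Lemma~\ref{lemma:existence_of_delta_set} will absorb that error into the $\pm\delta$ enlargement.

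More concretely, I would first fix $T^*=\sup_{x\in f^{-1}(\partial\BB_L)}t(x)<\infty$, which is finite by continuity of the deterministic exit time and compactness. For small $\eta>0$ I introduce the good event
\[G_\eta=\Big\{\sup_{0\le s\le T^*+1}|X_{\tau_L+s}-S_s X_{\tau_L}|<\eta\Big\}.\]
Applying the strong Markov property at $\tau_L$ and the standard uniform-in-initial-condition FW upper bound on the compact set $\overline{f^{-1}(\BB_L)}$, one expects $\Prob{G_\eta^c}\le C e^{-c_\eta\eps^{-2}}$ for some $c_\eta>0$. This is the only probabilistic input; the rest of the argument is deterministic.

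Next, transversality of $b$ to $\partial\DD$ implies a continuity property of the Poincar\'e correspondence: there is a modulus $\omega(\eta)\to 0$ as $\eta\to 0$ such that on $G_\eta$ one has $|X_\tau-\psi_L(X_{\tau_L})|\le\omega(\eta)$ and, in particular, $\tau<\tau_L+T^*+1$. Combined with the bi-Lipschitzness of $\psi_L^{-1}$ this gives a constant $\Lambda\ge 1$ such that $|\zeta_L(X_\tau)-f(X_{\tau_L})|\le\Lambda\omega(\eta)$ on $G_\eta$. The derivation of $\omega$ is routine via a $C^1$ defining function $\phi$ of $\partial\DD$ together with $\langle\nabla\phi,b\rangle>0$.

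Finally, using property~(\ref{item:four}) of Lemma~\ref{lemma:existence_of_delta_set}, I choose $\delta_0>0$ so small that, for every $\delta\in(0,\delta_0)$, the numbers $\dist(\partial\BB_L\sm(\zeta_L(A))_{\delta},\zeta_L(A))$ and $\dist(\partial\BB_L\sm\zeta_L(A),(\zeta_L(A))_{-\delta})$ are both bounded below by some $d(\delta)>0$, and then pick $\eta$ with $\Lambda\omega(\eta)<d(\delta)$. On $G_\eta$, the event $\{f(X_{\tau_L})\in(\zeta_L(A))_{-\delta}\}$ then forces $\zeta_L(X_\tau)\in\zeta_L(A)$, i.e.\ $X_\tau\in A$, while $\{X_\tau\in A\}$ forces $f(X_{\tau_L})\in(\zeta_L(A))_{\delta}$; the exceptional event $G_\eta^c$ contributes only $\mathcal{O}(e^{-C_\delta\eps^{-2}})$, yielding both bounds in~\eqref{eq:in_lemma:Est_Pull_Back}. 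The step I expect to require the most care is making the FW estimate on $G_\eta^c$ uniform in the random initial datum $X_{\tau_L}$; this should follow from compactness of $\overline{f^{-1}(\BB_L)}$ and the bounded-derivative modifications of $F,G$ already imposed in~\eqref{eq:modified_F}, but one must check that the FW rate function stays bounded below away from the constant path on the bounded time window $[0,T^*+1]$ uniformly in the starting point.
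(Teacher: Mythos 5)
Your proposal is correct and follows essentially the same route as the paper: a strong Markov decomposition at $\tau_L$, the uniform Freidlin--Wentzell shadowing bound of Lemma~\ref{lemma:FW_exponentialDecay} over the bounded time window $[0,\sup_{x\in f^{-1}(\partial\BB_L)}t(x)+1]$, transversality plus the Lipschitz property of $\psi_L^{-1}$ to convert shadowing into closeness of exit locations, and property~(\ref{item:four}) of Lemma~\ref{lemma:existence_of_delta_set} to absorb that closeness into the $\pm\delta$ enlargement. The uniformity in the random initial datum that you flag as the delicate step is exactly what Lemma~\ref{lemma:FW_exponentialDecay} supplies, so no further work is needed there.
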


To prove this lemma, we will need an estimate on the discrepancy between the deterministic path $S_tx$ and the perturbed one, i.e., the process $X_t$ under $\mathbb{P}^x=\Prob{\,\cdot\,|X_0=x}$.  To that end, we will use the following consequence of Proposition~2.3 and Theorem~2.4  in \cite[Chapter~III]{Azencott:MR590626} which is an extension of the standard FW large deviation bound without an assumption of uniform ellipticity of $\sigma$. We state it here because we can use it directly for the case $d=1$ in the proof of Proposition \ref{Prop:measure-theoretical_prop_of_pull-back}.

\begin{Lemma} \label{lemma:FW_exponentialDecay}
Let $b$ and $\sigma$ be Lipschitz and bounded. For all $\eps>0$, let $(X_t^\eps)_{t\ge0}$, be a solution of the It\^o equation \eqref{eq:SDE_X} with initial condition $X^\eps_0=x$, under a probability measure $\PP$ and recall the definition of the flow  $(S_t)$ from~\eqref{eq:ODE}.
For each deterministic $T>0$ and $\eta>0$, $$\ProbBig{\sup_{0\leq t \leq T}|X^x_t - S_tx|> \eta}=\mathcal{O}(e^{-C\eps^{-2}})$$ holds uniformly in $x$, where $C$ depends only on $T$, $\eta$ and the Lipschitz constant of $b$.
\end{Lemma}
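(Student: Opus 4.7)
The plan is to reduce the statement to a tail bound on a bounded martingale via a Gronwall--type pathwise comparison between $X^\eps_t$ and $S_t x$. This is the standard Freidlin--Wentzell upper bound strategy, and it works here even without uniform ellipticity because one only needs control on the size of the noise, not on its non-degeneracy.

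First, set $D_t = X^\eps_t - S_t x$. Subtracting the integrated ODE \eqref{eq:ODE} from the integrated SDE \eqref{eq:SDE_X} yields
\begin{align*}
D_t = \int_0^t \bigl[b(X^\eps_s) - b(S_s x)\bigr]\, ds + \eps M_t, \qquad M_t := \int_0^t \sigma(X^\eps_s)\, dW_s.
\end{align*}
Using the Lipschitz continuity of $b$ (with constant $L$) and taking the running supremum, one gets
\begin{align*}
\sup_{0\le s \le t}|D_s| \le L\int_0^t \sup_{0\le u \le s}|D_u|\, ds + \eps \sup_{0\le s \le T}|M_s|,
\end{align*}
so Gronwall's inequality gives $\sup_{0\le t\le T}|D_t| \le \eps\, e^{LT}\sup_{0\le t\le T}|M_t|$. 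Hence
\begin{align*}
\PP\Bigl\{\sup_{0\le t\le T}|X^x_t - S_tx| > \eta \Bigr\} \le \PP\Bigl\{\sup_{0\le t\le T}|M_t| > \eta\, e^{-LT}\eps^{-1}\Bigr\}.
\end{align*}

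Next, since $\sigma$ is bounded by some constant $K$, the quadratic variation of each component of $M$ is bounded deterministically: $\langle M^i\rangle_t \le d K^2 T$ for $t\le T$. Applying the exponential martingale inequality (the same Bernstein-type bound used throughout Section~5 of the paper, e.g.\ in the proof of Lemma~\ref{lemma:rough_est}) componentwise yields
\begin{align*}
\PP\Bigl\{\sup_{0\le t\le T}|M_t| > R \Bigr\} \le 2d\, \exp\!\Bigl(-\tfrac{R^2}{2dK^2 T}\Bigr).
\end{align*}
Plugging $R = \eta\, e^{-LT}\eps^{-1}$ into this bound gives the claimed $\mathcal{O}(e^{-C\eps^{-2}})$ estimate, with $C$ depending only on $\eta$, $T$, $L$, $K$, and $d$. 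The uniformity in the starting point $x$ is automatic because none of these constants depend on $x$.

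There is essentially no serious obstacle here, which is why the authors simply cite Azencott. The only point worth flagging is that the classical Freidlin--Wentzell lower bound (matching the rate function) does require uniform ellipticity or a more delicate reachability analysis, but for the one-sided deviation bound asked for in Lemma~\ref{lemma:FW_exponentialDecay}, boundedness of $\sigma$ alone suffices to control the stochastic integral, and the Lipschitz property of $b$ does the rest via Gronwall. The exponent $\eps^{-2}$ on the right-hand side is exactly the Gaussian scale of the noise, which is why conclusions of this form are negligible compared with the polynomial-in-$\eps$ probabilities estimated elsewhere in the paper.
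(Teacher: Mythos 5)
Your proof is correct. The paper itself does not prove this lemma: it is stated as a consequence of Proposition~2.3 and Theorem~2.4 of Azencott's Saint-Flour notes, so there is no internal argument to compare against. What you supply is the standard self-contained derivation underlying that citation: subtract the integrated ODE from the integrated SDE, use the Lipschitz bound on $b$ and Gronwall on the running supremum to get $\sup_{t\le T}|X^x_t-S_tx|\le \eps e^{LT}\sup_{t\le T}|M_t|$, and then control $\sup_{t\le T}|M_t|$ by the exponential (Bernstein-type) martingale inequality, which applies because the boundedness of $\sigma$ gives a deterministic bound on the quadratic variation --- exactly the same device the authors use in the proof of Lemma~\ref{lemma:rough_est} via \eqref{eq:<M>_and_|V|_bound}. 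Your observation that no ellipticity is needed for this one-sided estimate is the right one. The only cosmetic discrepancy is that your constant $C$ depends also on the bound for $\sigma$ and on the dimension, whereas the lemma's statement mentions only $T$, $\eta$ and the Lipschitz constant of $b$; this is a looseness in the paper's phrasing (its hypotheses already fix a bound on $\sigma$), and it is immaterial for the way the lemma is used, since all that matters downstream is that $C>0$ is independent of $x$ and $\eps$.
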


\begin{proof}[Proof of Proposition \ref{Prop:measure-theoretical_prop_of_pull-back}] First, we consider $d\geq 2$.
Splitting $A$ into two sets if necessary, we can assume that $\overline{\zeta_L(A)}\subset \Int_{L}\FF^i_{L+}$ without loss of generality. 
Let $\delta_1$ be defined by part~\eqref{item:one} of Lemma \ref{lemma:existence_of_delta_set}.
By compactness of $\overline{\zeta_L(A)}$, there is $\Delta\in(0,L_0\wedge \delta_1)$ such that $\zeta_L(A)\subset \FF^i_{L+, \Delta}$.  We use (\ref{item:three}) of Lemma \ref{lemma:existence_of_delta_set} to represent $(\zeta_L(A))_{\pm\delta}$ as a finite union of rectangles with disjoint interiors. Applying Proposition~\ref{Prop:Box_Case}
to these rectangles and noting that the contribution from (perhaps overlapping) boundaries
of these rectangles is~$0$, we obtain
\begin{align*}
    \lim_{\eps\to 0}\eps^{-\rho_i}\Prob{f(X_{\tau_L}) \in (\zeta_L(A))_{\pm\delta}}=L^{-\sum_{j<i}\fl}\EE\chi^i_+(\xi_0)  \HH^{i-1}\{(\zeta_L(A))_{\pm\delta} \cap \Lambda^i \}.
\end{align*}
Therefore, due to (\ref{item:two}) of Lemma~\ref{lemma:existence_of_delta_set} ,
\begin{align*}
    \lim_{\delta\to0}\lim_{\eps\to 0}\eps^{-\rho_i}\Prob{f(X_{\tau_L}) \in (\zeta_L(A))_{\pm\delta}}=L^{-\sum_{j<i}\fl}\EE\chi^i_+(\xi_0)  \HH^{i-1}\{\zeta_L(A) \cap \Lambda^i \}.
\end{align*}
Applying Lemma~\ref{lemma:Est_Pull_Back} we complete the proof for $d\ge 2$.

In the special case $d=1$, we have  $\partial \DD=\{q_-,q_+\}$ with $q_-<0<q_+$, and $i=1$.  It suffices to study $\Prob{X_\tau = q_\pm}$. Note that $f^{-1}(\FF^1_{L\pm,\delta})=\{p_\pm\}$ where $p_\pm = f^{-1}(\pm L)$ satisfy $q_-<p_-<0<p_+<q_+$. Proposition \ref{Prop:Box_Case} implies that
\begin{align*}
    \lim_{\eps\to 0}\Prob{X_{\tau_L}=p_\pm} = \EE \chi_\pm^1(\xi_0).
\end{align*}
Using Lemma~\ref{lemma:FW_exponentialDecay} we conclude that
$ \Prob{X_{\tau_L}=p_\pm;\ X_\tau \neq q_\pm}=\Oc\big(e^{-C\eps^{-2}}\big),$
which immediately implies that
$ \lim_{\eps\to 0}\Prob{X_\tau=q_\pm} = \EE \chi_\pm^1(\xi_0)$
and completes the proof.
\end{proof}

\begin{proof}[Proof of Lemma \ref{lemma:existence_of_delta_set}]

Without loss of generality we may assume that $\overline{B}\subset \Int_{L}{\FF^i_{L+}}$. Let us choose $\delta_1>0$ such that
\begin{align*}\overline{B}\subset \Int_{L}{\FF}^i_{L+,\delta_1}.
\end{align*}

If $i=1$, then ${\FF}^i_{L+,\delta_1}\cap \Lambda^1= B\cap \Lambda^1=\{p\}$, where $p=(L,0,\dots,0)$. Part~\eqref{item:equiv-Lambda-regular} of 
Lemma~\ref{lemma:Hyperbolic_Pull_Back} implies $\HH^0(\partial_L B \cap \Lambda^1)=0$, so $p\not\in \partial_L B $ and thus $p\in \Int_L B$. Hence, we can pick a closed rectangle $R$ on ${\FF}^i_{L+,\delta_1}$ such that $p\in  R \subset \Int_L B$. 
Setting $B_{-\delta}= R$ for all $\delta>0$, we ensure properties  (1)---(4) for $B_{-\delta}$. 
Choosing $\delta_1$ sufficiently small and setting $B_\delta= \FF^i_{L,\delta_1}$, for all $\delta>0$, we ensure properties  (1)---(4) for $B_{\delta}$.

If $i=d$, then ${\FF}^i_{L+,\delta_1}\cap\Lambda^d = {\FF}^i_{L+,\delta_1}$. Since ${\FF}^i_{L+,\delta_1}$ is $(d-1)$-dimensional and flat, the measure $\HH^{d-1}(\,\cdot\, \cap {\FF}^i_{L+,\delta_1}\cap \Lambda^d)=\HH^{d-1}(\,\cdot\, \cap {\FF}^i_{L+,\delta_1})$ can be viewed as the $(d-1)$-dimensional Lebesgue measure restricted on ${\FF}^i_{L+,\delta_1}$. By the standard approximation arguments, we can choose $B_{-\delta}$ and $B_\delta$ to be two unions of finitely many rectangles, which satisfy (1) and (2). Slightly adjusting the rectangles, we can ensure (4). 

For $1<i<d$, we need an extended version of this construction.
We construct the family $(B_\delta)_{\delta>0}$ first. 
Let us define a closed $(i-1)$-dimensional rectangle
\begin{align*}Q=\FF^i_{L+,\delta_1}\cap \Lambda^i=\{x\in\R^d:\ |x_1|,\ldots,|x_{i-1}|\le L-\delta_1;\  x_i=L;\ x_{i+1}=\ldots=x_d=0\}.
\end{align*}
For every $\delta>0$, using the compactness of $\overline B \cap Q$ and the fact that 
\begin{equation}
\label{eq:regularity-for-B}
\HH^{i-1}(\partial_{L}B\cap Q) =0
\end{equation}
(which follows from the regularity of $A$ and Lemma \ref{lemma:Hyperbolic_Pull_Back}), we can
find a set $G_\delta$  satisfying the following:
\begin{gather}
	\text{$G_\delta$ is a finite union of open $(i-1)$-dimensional rectangles};\label{eq:G_delta_property_rect}\\    
    \overline{B}\cap Q\subset G_\delta\subset Q; \label{eq:G_delta_property_containment}\\
    \HH^{i-1}(G_\delta \sm(B\cap Q)) = \HH^{i-1}(G_\delta \sm(\overline{B}\cap Q)) < \delta.\label{eq:G_delta_property_approximation}
\end{gather}
Since $Q\sm G_\delta$ and $\overline{B}$ are compact, we can adjust $G_\delta$ to additionally ensure that 
\begin{align}\label{eq:QG_delta_dist_to_B}
    \mathrm{dist}(Q\sm G_\delta, \overline{B})>0.
\end{align}
Let $\pi$ be the orthogonal projection onto $\Lambda^i$, namely
\begin{align*}
    \pi: x\in\R^d \mapsto (x^1,x^2,...,x^{i-1},0,\dots,0)\in\R^d.
\end{align*}
Since $\FF^i_{L+,\delta_1}\sm\overline{B}$ is open and $Q\sm G_\delta$ is closed in the relative topology of $\FF^i_{L,\delta_1}$, \eqref{eq:QG_delta_dist_to_B} implies that there is some ``thickness'' $h(\delta)\in(0,\delta)$ such that 
\begin{align}\label{eq:K_delta}
    K_\delta = \{x\in \FF^i_{L,\delta_1}: \pi(x)\in Q\sm G_\delta; \quad |x^j|< h(\delta),\ \forall j> i \}
\end{align}
satisfies
\begin{equation}
      \mathrm{dist}(K_\delta, B)>0.\label{eq:dist_K_B}
\end{equation}

Let us define $B_\delta=\FF^i_{L+,\delta_1} \sm K_\delta.$ Parts~(\ref{item:one}) and~(\ref{item:four}) 
of the lemma
now follow from~\eqref{eq:dist_K_B}.

Using~\eqref{eq:G_delta_property_rect} and subdividing rectangles if needed we can represent $\overline{G_\delta}$ as a finite union of $(i-1)$-dimensional closed rectangles with disjoint interiors. 
Part~\eqref{item:three} follows now from~\eqref{eq:K_delta} and the definition
of $B_\delta$.

Since
\begin{equation*}
B_\delta \cap Q=B_\delta \cap \Lambda^i= G_\delta,
\end{equation*}
we have $\HH^{i-1}(B_\delta \cap \Lambda^i)= \HH^{i-1}(G_\delta)$. Thus,
\[
0\le \HH^{i-1}(B_\delta \cap \Lambda^i)-\HH^{i-1}(B \cap \Lambda^i)
=\HH^{i-1}(G_\delta\sm (B\cap Q))<\delta,
\]
by \eqref{eq:G_delta_property_containment} and~\eqref{eq:G_delta_property_approximation}, so part~(\ref{item:two}) also follows.

To construct $B_{-\delta}$, we apply the same approach to the set $B_-=\FF_{L+,\delta_1}\sm B$
and note that due to the regularity of $A$, the set $B_-$ satisfies a version of \eqref{eq:regularity-for-B},
namely, 
\begin{equation*}
\HH^{i-1}(\partial_{L}B_-\cap Q) =0,
\end{equation*}
so we can find a cover $G_{-\delta}$ of $\overline{B_-}\cap Q$ satisfying the versions of requirements
\eqref{eq:G_delta_property_rect}--\eqref{eq:QG_delta_dist_to_B} with $B,G_\delta$ replaced by $B_-,G_{-\delta}$. We can now define $K_{-\delta}$ via $B_-$ and $G_{-\delta}$ similarly to~\eqref{eq:K_delta}--\eqref{eq:dist_K_B}, and check that properties
(1)--(4) hold if we set $B_{-\delta}=K_{-\delta}$.
\end{proof}

\begin{proof}[Proof of Lemma \ref{lemma:Est_Pull_Back}]
To derive the upper bound in~\eqref{eq:in_lemma:Est_Pull_Back}, we write
\begin{align*}
\Prob{X_\tau \in A}\le  \Prob{f(X_{\tau_L}) \in (\zeta_L(A))_\delta} + \mathtt{I},
\end{align*}
where  $\mathtt{I}=\Prob{f(X_{\tau_L}) \notin (\zeta_L(A))_\delta,\ X_\tau \in A}$ is the term we need to estimate.

Let $\Gamma =  f^{-1}(\partial \BB_{L})$.
Since $b$ is transversal to both~$\Gamma$  and $\partial \DD$,
 the inverse of the map~$\psi_L$ defined in~\eqref{eq:psi_L} is Lipschitz on~$\partial\DD$.

Let us introduce $F_\delta=f^{-1}((\zeta_L(A))_\delta)$ and notice that
$\gamma=\dist(\partial \DD\sm\psi_L(F_\delta), A)>0$ due to the Lipschitz property of $\psi_L^{-1}$ and (4) of Lemma \ref{lemma:existence_of_delta_set}.

Let $T_0=\sup\{t(x):\ x\in\Gamma\}$, where $t(\cdot)$ was
defined in~\eqref{eq:deterministic-exit-time}, and $T_1=T_0+1$.  Due to the same transversality properties, by time $T_1$, all orbits under $S$ originating from $\Gamma$ exit $\DD$ and end up at distance from $\partial \DD$ that is bounded away from~$0$. Therefore there is $\eta>0$ such that for every $x\in\Gamma$, and every continuous path
$y:[0,T_1]\to\R^d$  such that $\sup_{t\in[0,T_1]}|y(t)-S_tx|\le \eta$, 
the point $y_\DD$ of the first intersection of the path $y$ with $\partial\DD$ is well-defined and satisfies $|y_\DD-\psi_L(x)|<\gamma$.

We can now apply this statement along with Lemma~\ref{lemma:FW_exponentialDecay} to see that
\begin{align*}
\mathtt{I}&= \int_{\Gamma\sm F_\delta}\Prob{X_{\tau_L}\in dx}\Pro^x\{X_\tau \in A\}
 \\ &\le \int_{\Gamma\sm F_\delta}\Prob{X_{\tau_L}\in dx}\Pro^x\left\{\sup_{0\le t \le T_1} |X^x_t-S_tx| > \eta \right\}=\Oc(e^{-C\eps^{-2}})
\end{align*}
for some $C=C(\delta)>0$, which completes the proof of the upper bound in~\eqref{eq:in_lemma:Est_Pull_Back}.
The lower bound in~\eqref{eq:in_lemma:Est_Pull_Back} is derived similarly.
\end{proof}

\bibliographystyle{alpha}
\end{document}